\theoremstyle{plain}
\newtheorem{theorem}{Theorem}[section]
\newtheorem{proposition}[theorem]{Proposition}
\newtheorem{lemma}[theorem]{Lemma}
\newtheorem{corollary}[theorem]{Corollary}
\theoremstyle{definition}
\newtheorem{definition}{Definition}
\newtheorem{assumption}{Assumption}
\theoremstyle{remark}
\newtheorem{remark}[theorem]{Remark}
\newcommand{\smcst}{L}
\newcommand{\E}{\mathbb{E}}
\newcommand{\R}{\mathbb{R}}
\newcommand{\norm}[1]{\left\|#1\right\|}
\newcommand{\abs}[1]{\left\vert#1\right\vert}
\newcommand{\innerpc}[2]{\bigl\langle #1, #2 \bigr\rangle}
\DeclareMathAlphabet{\mathsfit}{T1}{\sfdefault}{\mddefault}{\sldefault}
\SetMathAlphabet{\mathsfit}{bold}{T1}{\sfdefault}{\bfdefault}{\sldefault}
\newcommand{\vast}{\bBigg@{3}}
\newcommand{\Vast}{\bBigg@{3.5}}
\newcommand{\Pro}{\mathbb{P}}
\DeclareMathOperator*{\dom}{dom}
\newcommand{\Set}[1]{\left\{#1\right\}}
\newcommand{\pref}[1]{\cref{#1}}
\newcommand{\pfref}[1]{Proof of \pref{#1}}
\newcommand{\cB}{\mathcal{B}}
\newcommand{\cE}{\mathcal{E}}
\newcommand{\cI}{\mathcal{I}}
\newcommand{\cO}{\mathcal{O}}
\newcommand{\cS}{\mathcal{S}}
\newcommand{\cX}{\mathcal{X}}
\DeclarePairedDelimiter{\crl}{\{}{\}}
 \DeclarePairedDelimiter{\prn}{(}{)}
\DeclarePairedDelimiter{\inner}{\langle}{\rangle}
 \DeclarePairedDelimiter{\set}{\{}{\}}
\newcommand{\rdef}{=\vcentcolon}
\newcommand{\HardTerm}{\cI}
\title{Convex and Non-convex Optimization Under Generalized Smoothness }
\newcommand{\printfnsymbol}[1]{%
	\textsuperscript{\@fnsymbol{#1}}%
}
\author{Haochuan Li\thanks{Equal contribution.}\\ {\small\texttt{haochuan@mit.edu}} \and Jian Qian\printfnsymbol{1}\\ {\small\texttt{jianqian@mit.edu}} \and Yi Tian  \\ {\small\texttt{yitian@mit.edu}} 
	    \and Alexander Rakhlin\\ {\small\texttt{rakhlin@mit.edu}} 
     \and Ali Jadbabaie\\ {\small\texttt{jadbabai@mit.edu}}
}
\date{Massachusetts Institute of Technology}
\begin{document}

\maketitle

\begin{abstract}
	Classical analysis of convex and non-convex optimization methods often requires the Lipshitzness of the gradient, which limits the analysis to functions bounded by quadratics. 
    Recent work relaxed this requirement to a non-uniform smoothness condition with the Hessian norm  bounded by an affine function of the gradient norm, and proved convergence in the non-convex setting via gradient clipping, assuming bounded noise. In this paper, we further generalize this non-uniform smoothness condition and develop a simple, yet powerful analysis technique that bounds the gradients along the trajectory, thereby leading to  stronger results for both convex and non-convex optimization problems. In particular, we obtain the classical convergence rates for (stochastic) gradient descent and Nesterov's accelerated gradient method in the convex and/or non-convex setting under this general smoothness condition. The new analysis approach does not require gradient clipping and allows heavy-tailed noise with bounded variance in the stochastic setting.
\end{abstract}

\section{Introduction}
\label{sec:intro}

In this paper, we study the following \emph{unconstrained} optimization problem
\begin{align}
    \text{min}_{x\in\cX} f(x),\label{eq:opt}
\end{align}
where $\cX\subseteq\R^d$ is the domain of $f$. 
Classical textbook analyses~\citep{nemirovskij1983problem,nesterov2003introductory} of \eqref{eq:opt} often require the Lipschitz smoothness condition, which assumes $\norm{\nabla^2 f(x)}\le L$ almost everywhere for some $L\ge 0$ called the smoothness constant. This condition, however,  is rather restrictive and only satisfied by functions that are both upper and lower bounded by quadratic functions.

Recently, \citet{zhang2019gradient} proposed the more general $(L_0,L_1)$-smoothness condition, which assumes $\norm{\nabla^2 f(x)}\le L_0+L_1\norm{\nabla f(x)}$ for some constants $L_0,L_1\ge 0$, motivated by their extensive language model experiments. This notion generalizes the standard Lipschitz smoothness condition and also contains e.g. univariate polynomial and exponential functions. For \emph{non-convex} and $(L_0,L_1)$-smooth functions, they prove convergence of gradient descent (GD) and stochastic gradient descent (SGD) \emph{with gradient clipping} and also provide a complexity lower bound for \emph{constant-stepsize} GD/SGD without clipping. Based on these results, they claim gradient clipping or other forms of adaptivity \emph{provably} accelerate the convergence for $(L_0,L_1)$-smooth functions. Perhaps due to the lower bound, all the follow-up works under this condition that we are aware of limit their analyses to adaptive methods. Most of these focus on non-convex functions. See Section~\ref{sec:related} for more discussions of related works.

In this paper, we significantly generalize the $(L_0,L_1)$-smoothness condition to the $\ell$-smoothness condition which assumes $\norm{\nabla^2 f(x)}\le \ell(\norm{\nabla f(x)})$ for some non-decreasing continuous function $\ell$. We develop a simple, yet powerful approach, which allows us to obtain stronger results for \emph{both convex and non-convex} optimization problems when $\ell$ is sub-quadratic (i.e., $\lim_{u\to\infty}\ell(u)/u^2=0$) or even more general. The $\ell$-smooth function class with a sub-quadratic $\ell$ also contains e.g. univariate rational and double exponential functions. In particular, we prove the convergence of \emph{constant-stepsize} GD/SGD and Nesterov's accelerated gradient method (NAG) in the convex or non-convex settings. For each method and setting, we obtain the classical convergence rate, under a certain  requirement of $\ell$. In addition, we relax the assumption of bounded noise to the weaker one of bounded variance with the simple SGD method. See Table~\ref{tab:results} for a summary of our results and assumptions for each method and setting. At first glance, our results ``contradict'' the lower bounds on constant-stepsize GD/SGD in \citep{zhang2019gradient,wang2022provable}; this will be reconciled in Section~\ref{subsec:reconcile}.

Our approach analyzes boundedness of gradients along the optimization trajectory. The idea behind it can be informally illustrated by the following ``circular'' reasoning. On the one hand, if gradients along the trajectory are bounded by a constant $G$, then the Hessian norms are bounded by the constant $\ell(G)$. Informally speaking, we essentially have the standard Lipschitz smoothness condition\footnote{This statement is informal because we can only bound Hessian norms \emph{along the trajectory}, rather than almost everywhere within a convex set as in the standard Lipschitz smoothness condition. For example, even if the Hessian norm is bounded at both $x_t$ and $x_{t+1}$, it does not directly mean the Hessian norm is also bounded over the line segment between them, which is required in classical analysis. A more formal statement will need Lemma~\ref{lem:lsl} presented later in the paper. } and can apply classical textbook analyses to prove convergence, which implies that gradients converge to zero. On the other hand, if gradients converge, they must be bounded, since any convergent sequence is bounded. In other words, the bounded gradient condition implies convergence, and convergence also implies the condition back, which forms a circular argument. If we can break this circularity of reasoning in a rigorous way, both the bounded gradient condition and convergence are proved. In this paper, we will show how to break the circularity using induction or contradiction arguments for different methods and settings in Sections~\ref{sec:convex}~and~\ref{sec:nonconvex}. We note that the idea of bounding gradients can be applied to the analysis of other optimization methods, e.g., the concurrent work \citep{li2023convergence} by subset of the authors, which uses a similar idea to obtain a rigorous and improved analysis of the Adam method~\citep{Kingma2014AdamAM}.

\noindent\textbf{Contributions.}
In light of the above discussions, we summarize our main contributions as follows.

\begin{itemize}[leftmargin=*]
    \item We generalize the standard Lipschitz smoothness and also the $(L_0,L_1)$-smoothness condition to the $\ell$-smoothness condition, and develop a new approach for analyzing convergence under this condition by bounding the gradients along the optimization trajectory. 
    \item We prove the convergence of \emph{constant-stepsize} GD/SGD/NAG in the convex and non-convex settings, and obtain the classical rates for all of them, as summarized in Table~\ref{tab:results}.
\end{itemize}
Besides the generalized smoothness condition and the new approach, our results are also novel in the following aspects.
\begin{itemize}[leftmargin=*]
    \item The convergence results of \emph{constant-stepsize} methods challenge the folklore belief on the necessity of adaptive stepsize for generalized smooth functions.
    \item We obtain new convergence results for GD and NAG in the convex setting under the generalized smoothness condition.
    \item We relax the assumption of bounded noise to the weaker one of bounded variance of noise in the stochastic setting with the simple SGD method.
\end{itemize}

\let \oldfootnote=\thefootnote
\renewcommand*{\thefootnote}{\fnsymbol{footnote}}
\begin{table}[t]
\label{tab:results}
\centering
\begin{tabular}{cccc} 
    \toprule
    Method & Convexity & $\ell$-smoothness & Gradient complexity \\ 
    \midrule
    \multirow{4}{*}[-3pt]{GD} & Strongly convex & \multirow{2}{*}{No requirement} & $\cO(\log(1/\epsilon))$ (Theorem~\ref{thm:strongly-convex-gd}) \\ 
    & Convex & & $\cO(1/\epsilon)$ (Theorem~\ref{thm:convex-gd} ) \\ \cmidrule{2-4}
    & \multirow{2}{*}[-2pt]{Non-convex} & Sub-quadratic $\ell$ & $\cO(1/\epsilon^2)$\footnotemark[1]  (Theorem~\ref{thm:nonconvex_gd}) \\ \cmidrule{3-4}
    & & Quadratic $\ell$ & $\Omega($exp. in cond \#$)$ (Theorem~\ref{thm:gd-nonconvex-lb-formal} ) \\ \midrule 
    NAG & Convex & Sub-quadratic $\ell$ & $\cO(1/\sqrt{\epsilon})$\footnotemark[1] (Theorem~\ref{thm:nesterov-main} ) \\ \midrule 
    SGD & Non-convex & Sub-quadratic $\ell$ & $\cO(1/\epsilon^4)$\footnotemark[1] (Theorem~\ref{thm:sgd}) \\
    \bottomrule
\end{tabular}
\caption{Summary of the results. $\epsilon$ denotes the sub-optimality gap of the function value in convex settings, and the gradient norm in non-convex settings. ``$*$'' denotes optimal rates.}
\end{table}

\let \thefootnote=\oldfootnote
\section{Related work}
\label{sec:related}

\noindent\textbf{Gradient-based optimizaiton.} The classical gradient-based optimization problems for the standard Lipschitz smooth functions have been well studied for both convex~\citep{nemirovskij1983problem,nesterov2003introductory,OPT-036} and non-convex functions. In the convex setting, the goal is to reach an $\epsilon$-sub-optimal point $x$ satisfying $f(x)-\inf_xf(x)\le\epsilon$. It is well known that GD achieves the $\cO(1/\epsilon)$ gradient complexity and NAG achieves the accelerated $\cO(1/\sqrt{\epsilon})$ complexity which is optimal among all gradient-based methods. For strongly convex functions, GD and NAG achieve the $\cO(\kappa\log(1/\epsilon))$ and $\cO(\sqrt{\kappa}\log(1/\epsilon))$ complexity respectively, where $\kappa$ is the condition number and the latter is again optimal. In the non-convex setting, the goal is to find an $\epsilon$-stationary point $x$ satisfying $\norm{\nabla f(x)}\le\epsilon$, since finding a global minimum is NP-hard in general. It is well known that GD achieves the optimal $\cO(1/\epsilon^{2})$ complexity which matches the lower bound in \citep{Carmon2017LowerBF}. In the stochastic setting for unbiased stochastic gradient with bounded variance, SGD achieves the optimal $\cO(1/\epsilon^{4})$ complexity~\citep{ghadimi2013stochastic}, matching the lower bound in \citep{Arjevani2019LowerBF}. In this paper, we obtain the classical rates in terms of $\epsilon$ for all the above-mentioned methods and settings, under a far more general smoothness condition.

\noindent\textbf{Generalized smoothness.} The $(L_0,L_1)$-smoothness condition proposed by \citet{zhang2019gradient} was studied by many follow-up works. Under the same condition, \citep{Zhang2020ImprovedAO} considers momentum in the updates and improves the constant dependency of the convergence rate for SGD with clipping derived in \citep{zhang2019gradient}. \citep{Qian2021UnderstandingGC} studies gradient clipping in incremental gradient methods, \citep{Zhao2021OnTC} studies stochastic normalized gradient descent, and \citep{Crawshaw2022RobustnessTU} studies a generalized SignSGD method, under the $(L_0,L_1)$-smoothess condition. \citep{Reisizadeh2023VariancereducedCF} studies variance reduction for $(L_0,L_1)$-smooth functions. \citep{chen2023generalizedsmooth} proposes a new notion of $\alpha$-symmetric generalized smoothness, which is roughly as general as $(L_0,L_1)$-smoothness. \citep{wang2022provable} analyzes convergence of Adam and provides a lower bound which shows non-adaptive SGD may diverge. In the stochastic setting, the above-mentioned works either consider the strong assumption of bounded gradient noise or require a very large batch size that depends on $\epsilon$, which essentially reduces the analysis to the deterministic setting. \citep{Faw2023BeyondUS} proposes an AdaGrad-type algorithm in order to relax the bounded noise assumption. Perhaps due to the lower bounds in \citep{zhang2019gradient,wang2022provable}, all the above works study methods with an adaptive stepsize.  {In this and our concurrent work~\citep{li2023convergence}, we further generalize the smoothness condition and analyze various methods under this condition through bounding the gradients along the trajectory. }

\section{Function class} 
\label{sec:pre}

In this section, we discuss the function class of interest where the objective function $f$ lies. We start with the following two standard assumptions in the literature of unconstrained optimization, which will be assumed throughout Sections~\ref{sec:convex}~and~\ref{sec:nonconvex} unless explicitly stated.
\begin{assumption}\label{ass:standard}
The objective function $f$ is differentiable and \emph{closed} within its \emph{open} domain $\cX$.
\end{assumption}
\begin{assumption}
\label{ass:bounded_from_below}
   The objective function $f$ is bounded from below, i.e., $f^*:=\inf_{x\in\cX} f(x)>-\infty$.
\end{assumption}

A function $f$ is said to be closed if its sub-level set $\{x\in\dom(f)\mid f(x)\le a\}$ is closed for each $a\in\R$. A continuous function $f$ with an open domain is closed if and only $f(x)$ tends to positive infinity when $x$ approaches the boundary of its domain~\citep{boyd2004convex}. Assumption~\ref{ass:standard} is necessary for our analysis to ensure that the iterates of a method with a reasonably small stepsize stays within the domain $\cX$. Note that for $\cX=\R^d$ considered in most unconstrained optimization papers, the assumption is trivially satisfied as all continuous functions over $\R^d$ are closed. We consider a more general domain which may not be the whole space because that is the case for some interesting examples in our function class of interest (see Section~\ref{sec:examples}). However, it actually brings us some additional technical difficulties especially in the stochastic setting, as we need to make sure the iterates do not go outside of the domain.

\subsection{Generalized smoothness}
In this section, we formally define the generalized smoothness condition, and present its properties and examples.

\subsubsection{Definitions}

Definitions~\ref{def:smooth_v1} and \ref{def:smooth_v2} below are two equivalent ways of stating the definition, where we use $\cB(x,R)$ to denote the Euclidean ball with radius $R$ centered at $x$.

\begin{definition}[$\ell$-smoothness]
    \label{def:smooth_v1}
    A real-valued differentiable function $f:\cX\to\R$ is $\ell$-smooth for some non-decreasing continuous function $\ell:[0,+\infty)\to(0,+\infty)$ if $\norm{\nabla^2 f(x)}\le \ell(\norm{\nabla f(x)})$ \emph{almost everywhere} (with respect to the Lebesgue measure) in $\cX$. 
\end{definition}

\begin{remark}
    Definition~\ref{def:smooth_v1} reduces to the classical $L$-smoothness when $\ell\equiv L$ is a constant function. It reduces to the $(L_0,L_1)$-smoothness proposed in \citep{zhang2019gradient} when $\ell(u)=L_0+L_1 u$ is an affine function.
\end{remark}

\begin{definition}[$(r,\ell)$-smoothness]
    \label{def:smooth_v2}
A real-valued differentiable function $f:\cX\to\R$ is $(r,\ell)$-smooth for continuous functions $r,\ell:[0,+\infty)\to(0,+\infty)$ where $\ell$ is non-decreasing and $r$ is non-increasing, if it satisfies 1) for any $x\in\cX$, $\cB(x,r(\norm{\nabla f(x)}))\subseteq \cX$, and 2) for any $x_1,x_2\in \cB(x,r(\norm{\nabla f(x)}))$, $\norm{\nabla f(x_1)-\nabla f(x_2)}\le \ell(\norm{\nabla f(x)})\cdot\norm{x_1-x_2}$.
\end{definition}

The requirements that $\ell$ is non-decreasing and $r$ is non-increasing do not cause much loss in generality. If these conditions are not satisfied, one can replace $\ell$ and $r$ with the non-increasing function $\tilde{r}(u):=\inf_{0\le v\le u}r(v) \le r(u)$ and non-decreasing function $\tilde{\ell}(u):=\sup_{0\le v\le u}\ell(v)\ge \ell(u)$ in Definitions~\ref{def:smooth_v1}~and~\ref{def:smooth_v2}. Then the only requirement is $\Tilde{r}>0$ and $\Tilde{\ell}<\infty$.

Next, we prove that the above two definitions are equivalent in the following proposition, whose proof is involved and deferred to Appendix~\ref{subapp:equal_def}. 
\begin{proposition}
\label{prop:equ_ass}
An $(r,\ell)$-smooth function is $\ell$-smooth; and an $\ell$-smooth function satisfying Assumption~\ref{ass:standard} is $\left(r,m\right)$-smooth where $m(u):=\ell(u+a)$ and $r(u):=a/m(u)$ for any $a>0$. 
\end{proposition}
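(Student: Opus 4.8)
The plan is to prove the two implications separately. For the direction that $(r,\ell)$-smoothness implies $\ell$-smoothness, I would argue that at any point $x$ where $f$ is twice differentiable, the operator norm of the Hessian is the local Lipschitz constant of the gradient map. Concretely, since $\cB(x, r(\norm{\nabla f(x)})) \subseteq \cX$ and $r(\norm{\nabla f(x)}) > 0$, property 2) of Definition \ref{def:smooth_v2} says $\nabla f$ is $\ell(\norm{\nabla f(x)})$-Lipschitz on a neighborhood of $x$; hence whenever $\nabla^2 f(x)$ exists, $\norm{\nabla^2 f(x)} = \lim_{t\to 0} \norm{\nabla f(x + t v) - \nabla f(x)}/t \le \ell(\norm{\nabla f(x)})$ for the unit vector $v$ achieving the norm. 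By Rademacher's theorem (or simply because we only need the bound almost everywhere, which is exactly where the Hessian is defined under Assumption \ref{ass:standard}), this gives $\norm{\nabla^2 f(x)} \le \ell(\norm{\nabla f(x)})$ a.e., which is Definition \ref{def:smooth_v1}.

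For the converse — an $\ell$-smooth $f$ satisfying Assumption \ref{ass:standard} is $(r,m)$-smooth with $m(u) = \ell(u+a)$ and $r(u) = a/m(u)$ — the key is a continuity/bootstrap argument controlling how much the gradient norm can grow over a short ball. Fix $x$ and write $G := \norm{\nabla f(x)}$ and $R := r(G) = a/\ell(G+a)$. I would first show $\cB(x,R) \subseteq \cX$: if not, there is a point on the boundary of $\cX$ within distance $R$ of $x$, so along the segment from $x$ toward that boundary point, closedness of $f$ (Assumption \ref{ass:standard}) forces $f$, and with it $\norm{\nabla f}$, to blow up; but I will show the gradient stays bounded on the segment, a contradiction. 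The quantitative heart is the claim that for any $y$ on a segment from $x$ staying in $\cX$ with $\norm{y - x} \le R$, one has $\norm{\nabla f(y)} \le G + a$. This follows from a Grönwall-type estimate: parametrize the segment as $\gamma(t) = x + t(y-x)$, and as long as $\norm{\nabla f(\gamma(t))} \le G+a$ the $\ell$-smoothness bound gives $\frac{d}{dt}\norm{\nabla f(\gamma(t))} \le \norm{\nabla^2 f(\gamma(t))}\,\norm{y-x} \le \ell(G+a) \cdot R = a$ (using that $\ell$ is non-decreasing), so integrating, $\norm{\nabla f(\gamma(t))} \le G + at \le G + a$; a standard open–closed continuity argument shows the bound never breaks on $[0,1]$, and in fact it cannot break before reaching the boundary of $\cX$ either, which is what closes the domain-inclusion argument above. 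Once $\cB(x,R)\subseteq\cX$ and $\norm{\nabla f(y)} \le G+a$ throughout it, property 2) is immediate: for $x_1,x_2 \in \cB(x,R)$, integrating the Hessian bound along the segment between them (which lies in $\cB(x,R)$ by convexity) yields $\norm{\nabla f(x_1) - \nabla f(x_2)} \le \ell(G+a)\,\norm{x_1 - x_2} = m(G)\,\norm{x_1-x_2}$.

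One technical subtlety I would flag: the Hessian bound in Definition \ref{def:smooth_v1} holds only almost everywhere, so the integration-along-a-segment steps must be justified for the absolutely continuous function $t \mapsto \nabla f(\gamma(t))$ rather than naively via the fundamental theorem of calculus; this is routine (the gradient is locally Lipschitz hence absolutely continuous on lines, and its a.e. derivative is bounded by the a.e. Hessian bound along almost every line), but it is the place where Assumption \ref{ass:standard} and the "almost everywhere" qualifier interact, and it is essentially the content that the paper later abstracts as Lemma \ref{lem:lsl}. I expect this measure-theoretic bookkeeping, together with the continuity argument that the gradient-growth bound cannot break before exiting $\cX$, to be the main obstacle; the rest is bookkeeping with the monotonicity of $\ell$ and $r$.
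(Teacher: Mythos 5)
Your proposal is correct and follows the paper's overall architecture, but it proves the key intermediate estimate by a different route. For the first direction you argue exactly as the paper does: at any point where $\nabla^2 f$ exists, the local Lipschitz bound of Definition~\ref{def:smooth_v2} passes to the limit in the directional difference quotient, and Rademacher's theorem (applied on the balls $\cB(x,r(\norm{\nabla f(x)}))$, with a countable cover of $\cX$ supplying the ``almost everywhere'' statement globally) gives twice differentiability a.e.; the paper spells out the covering by the level sets $\cS_n=\{n\le\norm{\nabla f(x)}\le n+1\}$, which you only gesture at, but that is a routine detail. For the converse, the paper first proves a generalized Gr\"onwall comparison lemma (its Lemma~\ref{lem:gen_gronwall}, via the ODE $\gamma'=\beta(\gamma)$ and the antiderivative $\phi(u)=\int du/\beta(u)$) and specializes it with $\psi(u)=\int_0^u dv/\ell(v)$ to get the gradient-growth bound $\norm{\nabla f(y)}\le\norm{\nabla f(x)}+a$ within radius $a/\ell(\norm{\nabla f(x)}+a)$ (Lemma~\ref{lem:local_smooth_lem}); you instead obtain the same bound by a self-consistent bootstrap: as long as the gradient norm stays below $G+a$, its growth rate along the segment is at most $\ell(G+a)\cdot r(G)=a$, and an open--closed continuity argument shows the threshold is never crossed. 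Your bootstrap is more elementary and exploits the specific choice of radius, whereas the paper's Gr\"onwall lemma yields a bound valid at any distance (which is not needed for this proposition); from that point on, your closedness-by-contradiction argument for $\cB(x,r)\subseteq\cX$ and the final integration of the Hessian bound between two points of the ball coincide with the paper's proof. One small caution: in the second direction your flagged justification that ``the gradient is locally Lipschitz hence absolutely continuous on lines'' is circular as stated, since local Lipschitzness of $\nabla f$ is precisely the conclusion being established there; the honest statement is that one assumes enough regularity (e.g.\ $\nabla f$ absolutely continuous along segments, or $f\in C^2$ where the Hessian exists) for the a.e.\ Hessian bound to be integrable along lines --- an implicit assumption the paper's own proofs of Lemmas~\ref{lem:gen_gronwall} and~\ref{lem:local_smooth_lem} share, so this does not put you below the paper's level of rigor.
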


The condition in Definition~\ref{def:smooth_v1} is simple and one can easily check whether it is satisfied for a given example function. On the other hand, Definition~\ref{def:smooth_v2} is a local Lipschitz condition on the gradient that is harder to verify. However, it is useful for deriving several useful properties in the next section.

\subsubsection{Properties}
\label{properties}
First, we provide the following lemma which is very useful in our analyses of all the methods considered in this paper. Its proof is deferred to Appendix~\ref{subapp:lemmas_related_smoothness}.
\begin{lemma}
    \label{lem:lsl} If $f$ is $(r,\ell)$-smooth, for any $x\in\cX$ satisfying $\norm{\nabla f(x)}\le G$, we have 1) $\cB(x,r(G))\subseteq\cX$, and 2) for any $x_1,x_2\in \cB(x,r(G))$,
    \begin{align}
    \label{eq:lsl}
    \norm{\nabla f(x_1)\!-\!\nabla f(x_2)}\!\le\! \smcst \norm{x_1\!-\!x_2},\quad f(x_1)\!\le\! f(x_2)\!+\!\innerpc{\nabla f(x_2)}{x_1\!-\!x_2}\!+\!\frac{L}{2}\norm{x_1\!-\!x_2}^2,
    \end{align}  
    where $\smcst:=\ell(G)$ is the \emph{effective smoothness constant}.
\end{lemma}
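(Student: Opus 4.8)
The plan is to deduce Lemma~\ref{lem:lsl} directly from the definition of $(r,\ell)$-smoothness, using monotonicity of $r$ and $\ell$ to convert the data-dependent radius $r(\norm{\nabla f(x)})$ and constant $\ell(\norm{\nabla f(x)})$ into the uniform quantities $r(G)$ and $L=\ell(G)$. First, fix $x$ with $\norm{\nabla f(x)}\le G$. Since $r$ is non-increasing we have $r(\norm{\nabla f(x)})\ge r(G)$, so the inclusion $\cB(x,r(\norm{\nabla f(x)}))\subseteq\cX$ from part~1 of Definition~\ref{def:smooth_v2} immediately gives $\cB(x,r(G))\subseteq \cB(x,r(\norm{\nabla f(x)}))\subseteq\cX$, which is claim~1. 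For claim~2, take any $x_1,x_2\in\cB(x,r(G))$; these also lie in $\cB(x,r(\norm{\nabla f(x)}))$, so part~2 of Definition~\ref{def:smooth_v2} applies and, combined with $\ell(\norm{\nabla f(x)})\le\ell(G)=L$ (monotonicity of $\ell$), yields the first inequality $\norm{\nabla f(x_1)-\nabla f(x_2)}\le L\norm{x_1-x_2}$.

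The second inequality (the ``descent-type'' quadratic upper bound) follows from the first by the standard integration-along-a-segment argument, but one must check that the argument is legitimate in this local setting. The point is that $\cB(x,r(G))$ is convex, so the segment $[x_2,x_1]$ lies entirely inside it, and hence the Lipschitz bound on $\nabla f$ holds at every point of that segment. Writing $f(x_1)-f(x_2)=\int_0^1\innerp{\nabla f(x_2+t(x_1-x_2))}{x_1-x_2}\,dt$ and subtracting $\innerp{\nabla f(x_2)}{x_1-x_2}$, one bounds the remainder by $\int_0^1\norm{\nabla f(x_2+t(x_1-x_2))-\nabla f(x_2)}\norm{x_1-x_2}\,dt\le \int_0^1 Lt\norm{x_1-x_2}^2\,dt=\frac{L}{2}\norm{x_1-x_2}^2$. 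This gives the claimed inequality. One should note that differentiability of $f$ on the open domain (Assumption~\ref{ass:standard}) and the Lipschitzness of $\nabla f$ on $\cB(x,r(G))$ make the fundamental theorem of calculus applicable here; no almost-everywhere subtleties arise because $\nabla f$ is genuinely continuous on the ball.

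I expect the only mild obstacle is bookkeeping with the two data-dependent quantities: the radius and the Lipschitz constant in Definition~\ref{def:smooth_v2} are both evaluated at $\norm{\nabla f(x)}$, not at $G$, so the proof hinges on simultaneously using that $r$ is non-increasing (to shrink the ball) and $\ell$ is non-decreasing (to enlarge the constant), and on the key observation that a smaller ball still sits inside the larger one where the local Lipschitz property is guaranteed. Once that is set up, everything else is the textbook descent lemma. There is essentially no hard analytic content; the lemma is really a clean restatement of Definition~\ref{def:smooth_v2} that packages the local smoothness into a form usable by classical proofs, and the effective constant $L=\ell(G)$ is exactly the ``pretend we have $L$-smoothness'' device described informally in the introduction.
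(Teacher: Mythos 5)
Your proposal is correct and follows essentially the same route as the paper's proof: monotonicity of $r$ and $\ell$ to pass from the data-dependent radius and constant to $r(G)$ and $L=\ell(G)$, then the standard integration-along-the-segment argument (using convexity of the ball) to derive the quadratic upper bound from the local Lipschitz property.
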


\begin{remark}
    \label{remark:lsl_for_ell_smooth}
     Since we have shown the equivalence between $\ell$-smoothness and $(r,\ell)$-smoothness, Lemma~\ref{lem:lsl} also applies to $\ell$-smooth functions, for which we have $\smcst=\ell(2G)$ and $r(G)=G/\smcst$ if choosing $a=G$ in Proposition~\ref{prop:equ_ass}.
\end{remark}

Lemma~\ref{lem:lsl} states that, if the gradient at $x$ is bounded by some constant $G$, then within its neighborhood with a \emph{constant} radius, we can obtain \eqref{eq:lsl}, the same inequalities that were derived in the textbook analysis~\citep{nesterov2003introductory} under the standard Lipschitz smoothness condition. With \eqref{eq:lsl}, the analysis for generalized smoothness is not much harder than that for standard smoothness. Since we mostly choose $x=x_2=x_t$ and $x_1=x_{t+1}$ in the analysis, in order to apply Lemma~\ref{lem:lsl}, we need two conditions:  $\norm{\nabla f(x_t)}\le G$ and $\norm{x_{t+1}-x_t}\le r(G)$ for some constant $G$. The latter is usually directly implied by the former for most deterministic methods with a small enough stepsize, and the former can be obtained with our new approach that bounds the gradients along the trajectory.

With Lemma~\ref{lem:lsl}, we can derive the following useful lemma which is the reverse direction of a generalized Polyak-Lojasiewicz (PL) inequality, whose proof is deferred to Appendix~\ref{subapp:lemmas_related_smoothness}. 
\begin{lemma}
    \label{lem:reverse-PL-main}
    If $f$ is $\ell$-smooth, then $\norm{\nabla f(x)}^2 \leq 2\ell(2\norm{\nabla f(x)})\cdot(f(x)- f^*)$ for any $x\in\cX$.
\end{lemma}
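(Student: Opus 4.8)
The plan is to fix an arbitrary $x \in \cX$ and bound $f(x) - f^*$ from below by $\frac{1}{2\ell(2\|\nabla f(x)\|)}\|\nabla f(x)\|^2$. Write $G := \|\nabla f(x)\|$. By the remark following Lemma~\ref{lem:lsl}, since $f$ is $\ell$-smooth we may invoke Lemma~\ref{lem:lsl} with this $G$, choosing $a = G$ in Proposition~\ref{prop:equ_ass}; this gives effective smoothness constant $\smcst = \ell(2G)$ and radius $r(G) = G/\smcst$, together with the guarantee $\cB(x, G/\smcst) \subseteq \cX$ and the descent-type inequality $f(x_1) \le f(x_2) + \innerpc{\nabla f(x_2)}{x_1 - x_2} + \frac{\smcst}{2}\|x_1 - x_2\|^2$ for all $x_1, x_2 \in \cB(x, G/\smcst)$. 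The natural move is then the classical one: take the point $x^+ := x - \frac{1}{\smcst}\nabla f(x)$, which satisfies $\|x^+ - x\| = G/\smcst = r(G)$, hence $x^+ \in \cB(x, r(G)) \subseteq \cX$, so $f(x^+) \ge f^*$ is meaningful.

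Applying \eqref{eq:lsl} with $x_2 = x$ and $x_1 = x^+$ yields $f(x^+) \le f(x) - \frac{1}{\smcst}\|\nabla f(x)\|^2 + \frac{1}{2\smcst}\|\nabla f(x)\|^2 = f(x) - \frac{1}{2\smcst}\|\nabla f(x)\|^2$. Combining with $f^* \le f(x^+)$ gives $f^* \le f(x) - \frac{1}{2\smcst}\|\nabla f(x)\|^2$, i.e. $\|\nabla f(x)\|^2 \le 2\smcst (f(x) - f^*) = 2\ell(2\|\nabla f(x)\|)(f(x) - f^*)$, which is exactly the claim. The only subtlety is that \eqref{eq:lsl} is an inequality between $f$-values at two points of the ball, and the boundary point $x^+$ with $\|x^+-x\| = r(G)$ is included because $\cB(x,r(G))$ is the \emph{closed} ball; one should double-check that Lemma~\ref{lem:lsl} as stated indeed covers $x_1,x_2$ on the boundary, which it does since it quantifies over $x_1, x_2 \in \cB(x, r(G))$ with the closed ball.

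I expect essentially no obstacle here — this is the standard "gradient descent step decreases $f$ below $f^*$" argument, transplanted verbatim into the generalized-smoothness setting, with Lemma~\ref{lem:lsl} doing all the work of supplying a local Lipschitz constant. The one place to be careful is the bookkeeping of which $G$ and which $a$ to feed into Proposition~\ref{prop:equ_ass}: one needs the effective constant to come out as $\ell(2\|\nabla f(x)\|)$ rather than $\ell(\|\nabla f(x)\|)$, and this forces the choice $a = G$; a different choice would give a different (and for the intended applications, less convenient) constant in the statement. If one instead wanted the sharper-looking bound with $\ell(\|\nabla f(x)\|)$ one would need the $(r,\ell)$-smooth formulation directly, but the stated lemma is phrased for $\ell$-smoothness, so the factor $2$ inside $\ell$ is unavoidable and correct.
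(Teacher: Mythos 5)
Your proposal is correct and is essentially identical to the paper's own proof: both convert $\ell$-smoothness to $(r,m)$-smoothness via Proposition~\ref{prop:equ_ass} with $a=G=\norm{\nabla f(x)}$, invoke Lemma~\ref{lem:lsl} to get the local descent inequality on $\cB\left(x,\frac{G}{\ell(2G)}\right)$, and evaluate it at the gradient step $x-\frac{\nabla f(x)}{\ell(2G)}$, then use $f^*\le f(x^+)$. The extra care you take about the boundary point and the choice $a=G$ matches the paper's (more implicit) bookkeeping, so there is nothing to add.
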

Lemma~\ref{lem:reverse-PL-main} provides an inequality involving the gradient norm and the sub-optimality gap. For example, when $\ell(u)=u^\rho$ for some $0\le \rho<2$, this lemma suggests $\norm{\nabla f(x)}\le\cO\left((f(x)-f^*)^{1/(2-\rho)}\right)$, which means the gradient norm is bounded whenever the function value is bounded. The following corollary provides a more formal statement for general sub-quadratic $\ell$ (i.e., $\lim_{u\to\infty}\ell(u)/u^2=0$), and we defer its proof to Appendix~\ref{subapp:lemmas_related_smoothness}.

\begin{corollary}
    \label{cor:bd_g_by_f}
    Suppose $f$ is $\ell$-smooth where $\ell$ is sub-quadratic. If $f(x)-f^*\le F$ for some $x\in\cX$ and $F\ge0$, denoting $G:=\sup\{u\ge0\mid u^2\le{2\ell(2u)}\cdot F\}$, then they satisfy $G^2=2\ell(2G)\cdot F$ and we have $\norm{\nabla f(x)}\le G<\infty$.
\end{corollary}

Therefore, in order to bound the gradients along the trajectory as we discussed below Lemma~\ref{lem:lsl}, it suffices to bound the function values, which is usually easier. 
\subsubsection{Examples}
\label{sec:examples}
The most important subset of $\ell$-smooth (or $(r,\ell)$-smooth) functions are those with a polynomial $\ell$, and can be characterized by the $(\rho, L_0,L_\rho)$-smooth function class defined below. 
\begin{definition}[$(\rho, L_0, L_\rho)$-smoothness]  \label{def:l0lrho} 
    A real-valued differentiable function $f$ is $(\rho, L_0, L_\rho)$-smooth 
    for constants $\rho, L_0, L_\rho\ge 0$ if it is $\ell$-smooth with 
    $\ell(u) = L_0 + L_\rho u^\rho $.
\end{definition}

Definition~\ref{def:l0lrho} reduces to the standard Lipschitz smoothness condition when $\rho=0$ or $L_\rho=0$ and to the $(L_0,L_1)$-smoothness proposed in \citep{zhang2019gradient} when $\rho=1$. We list several univariate examples of $(\rho, L_0,L_\rho)$-smooth functions for different $\rho$s in Table~\ref{tab:l0lrho_exp} with their rigorous justifications in Appendix~\ref{subapp:example}. Note that when $x$ goes to infinity, polynomial and exponential functions corresponding to $\rho=1$ grow much faster than quadratic functions corresponding to $\rho=0$ . Rational and logarithmic functions for $\rho>1$ grow even faster as they can blow up to infinity near finite points. Note that the domains of such functions are not $\R^d$, which is why we consider the more general Assumption~\ref{ass:standard} instead of simply assuming $\cX=\R^d$.

Aside from logarithmic functions, the $(2,L_0,L_2)$-smooth function class also includes other univariate \emph{self-concordant} functions. This is an important function class in  the analysis of Interior Point Methods and coordinate-free analysis of the Newton method~\citep{nesterov2003introductory}. More specifically, a convex function $h:\R\to\R$ is self-concordant if $\abs{h'''(x)}\le 2h''(x)^{3/2}$ for all $x\in\R$.  Formally, we have the following proposition whose proof is deferred to Appendix~\ref{subapp:example}.
\begin{proposition}
\label{prop:self-concordant}
    If $h:\R\to\R$ is a self-concordant function satisfying $h''(x)>0$ over the interval $(a,b)$, then $h$ restricted on $(a,b)$ is $(2,L_0,2)$-smooth for some $L_0>0$.
\end{proposition}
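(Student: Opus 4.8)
The plan is to show that a self-concordant function $h$ with $h''>0$ on $(a,b)$ satisfies a bound of the form $h''(x) \le L_0 + L_1 h'(x)^2$, which after a constant rescaling becomes exactly the $(2,L_0,2)$-smoothness condition, since the one-dimensional Hessian norm is just $h''(x)$ and the gradient norm is $|h'(x)|$. So the core of the proof is a differential-inequality argument. First I would introduce $g(x) := h''(x)^{-1/2}$, which is well-defined and positive on $(a,b)$. The self-concordance inequality $|h'''(x)| \le 2 h''(x)^{3/2}$ translates directly into $|g'(x)| = \tfrac12 h''(x)^{-3/2} |h'''(x)| \le 1$, so $g$ is $1$-Lipschitz on $(a,b)$. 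This is the standard first step in the theory of self-concordant functions and I would carry it out explicitly.

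Next I would use the Lipschitz bound on $g$ to control how fast $h''$ can grow relative to $h'$. The idea is: pick a reference point $x_0 \in (a,b)$, and for any $x \in (a,b)$ write $g(x) \ge g(x_0) - |x - x_0|$, hence $h''(x) = g(x)^{-2} \le (g(x_0) - |x-x_0|)^{-2}$ whenever $|x-x_0| < g(x_0)$; this already shows $h''$ can only blow up near the endpoints of an interval of length at most $g(x_0)$ around $x_0$. To connect this to $h'$, I would integrate: since $h'' > 0$, $h'$ is strictly increasing, and $h'(x) - h'(x_0) = \int_{x_0}^x h''(t)\,\diff t$. Using the upper bound on $h''$ along the way, and the fact that $g$ cannot decrease by more than the distance traveled, one gets a relation forcing $|h'(x)|$ to be large precisely when $h''(x)$ is large. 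Concretely, I expect to show that on the region where $h''(x)$ exceeds some threshold, $h'(x)^2$ grows at least linearly in $h''(x)$, giving a bound $h''(x) \le L_0 + L_1 h'(x)^2$; on the complementary region $h''$ is bounded by a constant, which is absorbed into $L_0$. A clean way to organize this: fix any $x$ and run the Lipschitz estimate from $x$ toward whichever endpoint makes $|h'|$ small (i.e.\ toward the point where $h'$ vanishes, or toward an endpoint), and compare $\int h''$ against $\int (g(x) - |t-x|)^{-2}\,\diff t$, which evaluates in closed form and produces terms like $g(x)^{-1} = h''(x)^{1/2}$.

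After establishing $h''(x) \le L_0 + L_1 h'(x)^2$ for appropriate constants, I would finish by rescaling: if $f(x) := c\, h(x)$ for a suitable constant $c$, or more simply by noting that $(2, L_0, L_1)$-smoothness for any $L_1$ can be rewritten as $(2, L_0', 2)$-smoothness after adjusting the additive constant via Young's inequality (since $L_1 u^2 \le 2 u^2 + (\text{const depending on } L_1)$ is false in general, but rather one uses that $\ell(u) = L_0 + L_1 u^2 \le L_0 + 2u^2$ only if $L_1 \le 2$, so instead I would apply the homogeneity of self-concordance: $\lambda h$ is self-concordant for every $\lambda \ge 1$ as well, or rather scale the variable). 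The cleanest route is: self-concordance is invariant under $h \mapsto \lambda h$ for $\lambda \ge 1$? No — one checks $(\lambda h)''' = \lambda h'''$ and $(\lambda h)'' = \lambda h''$, so $|(\lambda h)'''| \le 2\lambda h''^{3/2} = 2 \lambda^{-1/2}((\lambda h)'')^{3/2} \le 2((\lambda h)'')^{3/2}$ for $\lambda \ge 1$, so indeed $\lambda h$ stays self-concordant and its "$L_1$" in the bound $\,(\lambda h)'' \le L_0 \lambda + L_1 \lambda^{-1}(h')^2\lambda^2\cdot\lambda^{-1}$ can be tuned; choosing $\lambda$ large enough turns the coefficient of $(h')^2$ into exactly $2$.

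The main obstacle I anticipate is the second step — turning the $1$-Lipschitz bound on $g = (h'')^{-1/2}$ into the inequality $h'' \le L_0 + L_1 (h')^2$ with honest, uniform constants over all of $(a,b)$, including points near where $h'$ changes sign and points near the endpoints. Near an endpoint $h''$ may genuinely blow up, but so must $|h'|$ (because $\int h''$ near the endpoint diverges), and making that trade-off quantitative with the right power (linear in $h''$, i.e.\ $(h')^2 \gtrsim h''$) requires care: one must integrate the sharp envelope $(g(x) - |t-x|)^{-2}$ rather than a crude bound, and handle the case where the "small-$h'$" side is another endpoint rather than a zero of $h'$. I would handle this by a case analysis on whether $h'$ vanishes in $(a,b)$ and, if so, integrating outward from that zero; the constants $L_0, L_\rho$ will then depend on $h$ through $g$ evaluated at (or limiting behavior near) that reference configuration, which is allowed since the proposition only claims existence of $L_0$.
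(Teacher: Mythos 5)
Your core machinery is sound and is essentially the integral form of the paper's argument. The paper simply rewrites self-concordance as $\frac{d}{dx}\,h''(x)^{1/2} = \frac{1}{2}h''(x)^{-1/2}h'''(x) \le h''(x) = \frac{d}{dx}\,h'(x)$, integrates from a fixed $x_0$ to $y$ to get $h''(y)^{1/2} \le h''(x_0)^{1/2} - h'(x_0) + h'(y)$, and then squares using $(\alpha+\beta)^2\le 2\alpha^2+2\beta^2$. Your route via the $1$-Lipschitzness of $g=(h'')^{-1/2}$ can reach exactly the same place: the envelope $h''(t)\ge (g(x)+|t-x|)^{-2}$ integrated between $x$ and any \emph{fixed} reference point $x_0$ gives $|h'(x)-h'(x_0)|\ge h''(x)^{1/2}-\bigl(g(x)+|x-x_0|\bigr)^{-1}\ge h''(x)^{1/2}-h''(x_0)^{1/2}$, i.e.\ the additive inequality $h''(x)^{1/2}\le |h'(x)|+|h'(x_0)|+h''(x_0)^{1/2}$; no case analysis on zeros of $h'$ or on endpoints is needed, since $|h'(x_0)|$ and $h''(x_0)^{1/2}$ are just constants. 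The essential point is to keep the bound in this additive square-root form: squaring it then yields the coefficient $2$ on $h'(x)^2$ automatically, which is precisely what $(2,L_0,2)$-smoothness requires.

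The genuine gap is in your endgame. You only anticipate the weaker conclusion $h''\le L_0+L_1(h')^2$ with an unspecified $L_1$, and you propose to force $L_1=2$ by rescaling $h\mapsto \lambda h$ with $\lambda\ge 1$. That step does not work: the proposition is about $h$ itself, and if you establish $(\lambda h)''\le L_0'+2\bigl((\lambda h)'\bigr)^2$, dividing by $\lambda$ gives $h''\le L_0'/\lambda + 2\lambda\,(h')^2$, a \emph{worse} coefficient $2\lambda\ge 2$ for $h$; taking $\lambda<1$ destroys self-concordance, and rescaling the variable $x\mapsto cx$ leaves the coefficient of $(h')^2$ unchanged. (You already noted that the Young's-inequality shortcut fails; indeed an inequality $h''\le L_0+L_1(h')^2$ with $L_1>2$ cannot in general be downgraded to coefficient $2$ at the cost of a larger $L_0$.) So, as written, the plan does not deliver $(2,L_0,2)$-smoothness, and moreover the key quantitative step is only sketched (``I expect to show''). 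The fix is to carry out your integration sharply enough to land on the additive inequality above — the constant in front of $h''(x)^{1/2}$ must be exactly $1$ before squaring — at which point no rescaling is needed; alternatively, integrate the differential inequality directly as the paper does, which makes the whole proof two lines.
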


\begin{table}[t]
\centering
\begin{tabular}{cccccccc} 
    \toprule
    {$\rho$} & {$0$} & {$1$} & {$1$} & {$1^+$}  & {$1.5$} & {$2$} & {$\frac{p-2}{p-1}$} \\ 
    \hline
    {Example Functions}  & {Quadratic} & {Polynomial} & {$a^x$} &{$a^{(b^x)}$} & {Rational  } & {Logarithmic}  & {$x^p$} \\ 
    \bottomrule
\end{tabular}
\caption{Examples of univariate $(\rho,L_0,L_\rho)$ smooth functions for different $\rho$s. The parameters $a,b,p$ are \emph{real numbers} (not necessarily integers) satisfying $a,b>1$ and $p<1$ or $p\ge2$. We use $1^+$ to denote any real number slightly larger than $1$.
}
\label{tab:l0lrho_exp}
\end{table}

\section{Convex setting}
\label{sec:convex}

In this section, we present the convergence results of gradient descent (GD) and Nesterov's accelerated gradient method (NAG) in the convex setting. Formally, we define convexity as follows.

\begin{definition}
    A real-valued differentiable function $f:\cX\to\R$ is $\mu$-strongly-convex for $\mu\ge 0$ if $\cX$ is a convex set and $f(y)-f(x)\ge \innerpc{\nabla f(x)}{y-x} + \frac{\mu}{2}\norm{y-x}^2$ for any $x,y\in\cX$. A function is convex if it is $\mu$-strongly-convex with $\mu=0$.
\end{definition}

We assume the existence of a global optimal point $x^*$ throughout this section, as in the following assumption. However, we want to note that, for gradient descent, this assumption is just for simplicity rather than necessary.
\begin{assumption}
    \label{ass:existence_opt}
    There exists a point $x^*\in\cX$ such that $f(x^*)=f^*=\inf_{x\in\cX} f(x)$.
\end{assumption}
\subsection{Gradient descent}
The gradient descent method with a constant stepsize $\eta$ is defined via the following update rule
\begin{align}
\label{eq:gd}
    x_{t+1} = x_t -\eta \nabla f(x_t).
\end{align}
As discussed below Lemma~\ref{lem:lsl}, the key in the convergence analysis is to show $\norm{\nabla f(x_t)}\le G$ for all $t\ge 0$ and some constant $G$. We will prove it by induction relying on the following lemma whose proof is deferred to Appendix~\ref{app:convex_gd}.
\begin{lemma}
    \label{lem:convex_gd_one_step}
    For any $x\in\cX$ satisfying $\norm{\nabla f(x)}\le G$, define $x^+:=x-\eta\nabla f(x)$. If $f$ is convex and $(r,\ell)$-smooth, and $\eta\le \min\left\{\frac{2}{\ell(G)},\frac{r(G)}{2G}\right\}$, we have $x^+\in\cX$ and $\norm{\nabla f(x^+)}\le \norm{\nabla f(x)}\le G$.
\end{lemma}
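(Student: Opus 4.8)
\textbf{Proof proposal for Lemma~\ref{lem:convex_gd_one_step}.}

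The plan is to use Lemma~\ref{lem:lsl} to get a local descent inequality around $x$, and then show that the gradient norm does not increase after one gradient step, provided the stepsize is small enough that the step stays inside the ball on which the local Lipschitz bound holds. First I would verify the containment: since $\norm{\nabla f(x)}\le G$ and $r$ is non-increasing, Lemma~\ref{lem:lsl} gives $\cB(x,r(G))\subseteq\cX$, and because the step length is $\norm{x^+-x}=\eta\norm{\nabla f(x)}\le \eta G\le r(G)/2 < r(G)$ (using $\eta\le r(G)/(2G)$), we conclude $x^+\in\cB(x,r(G))\subseteq\cX$. So the first conclusion $x^+\in\cX$ is immediate, and moreover both $x$ and $x^+$ lie in the ball $\cB(x,r(G))$ where \eqref{eq:lsl} applies with $L=\ell(G)$.

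Next I would bound $\norm{\nabla f(x^+)}$. The clean way is the standard ``co-coercivity along the segment'' argument, but since \eqref{eq:lsl} only holds on the ball, I would instead directly estimate $\norm{\nabla f(x^+)}^2$. Write $\norm{\nabla f(x^+)}^2 = \norm{\nabla f(x)}^2 + 2\innerpc{\nabla f(x)}{\nabla f(x^+)-\nabla f(x)} + \norm{\nabla f(x^+)-\nabla f(x)}^2$. Using the $L$-Lipschitz bound from \eqref{eq:lsl} on the last term, $\norm{\nabla f(x^+)-\nabla f(x)}\le L\norm{x^+-x} = L\eta\norm{\nabla f(x)}$, so that term is at most $L^2\eta^2\norm{\nabla f(x)}^2$. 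For the cross term I need a lower bound on $\innerpc{\nabla f(x)}{\nabla f(x)-\nabla f(x^+)} = \tfrac{1}{\eta}\innerpc{x-x^+}{\nabla f(x)-\nabla f(x^+)}$, which by convexity (monotonicity of the gradient) is nonnegative; to extract a quantitatively useful bound I would apply the descent inequality from \eqref{eq:lsl} in both directions (at the pair $x,x^+$) to get $\innerpc{\nabla f(x)-\nabla f(x^+)}{x-x^+}\ge \tfrac{1}{L}\norm{\nabla f(x)-\nabla f(x^+)}^2$ — this is the local analogue of co-coercivity, and it follows from combining the two quadratic upper bounds the way it is done in the classical proof, using that the minimum of each quadratic is attained inside $\cB(x,r(G))$ (here is where I must double-check the radius condition is enough to keep the auxiliary minimizer in the ball; $\eta\le 2/L$ together with $\eta\le r(G)/(2G)$ should suffice). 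Plugging this in, $\norm{\nabla f(x^+)}^2 \le \norm{\nabla f(x)}^2 - (2\eta/L)\cdot(L^2/?)\dots$; more simply, combining the pieces gives $\norm{\nabla f(x^+)}^2\le \norm{\nabla f(x)}^2 - \eta(2-\eta L)\cdot(\text{something nonnegative})$, and since $\eta\le 2/L$ makes $2-\eta L\ge 0$, we obtain $\norm{\nabla f(x^+)}\le\norm{\nabla f(x)}\le G$, completing the proof.

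Alternatively, and perhaps more robustly, I would avoid the co-coercivity subtlety entirely: let $g(t) := \norm{\nabla f(x - t\eta\nabla f(x))}$ for $t\in[0,1]$ and argue that $g$ cannot exceed $g(0)=\norm{\nabla f(x)}$. Set $T:=\sup\{t\in[0,1]: g(s)\le G \text{ for all } s\le t\}$; on $[0,T]$ the whole segment lies in $\cB(x,r(G))$ (as $\eta G t\le r(G)/2$), so \eqref{eq:lsl} holds along it, and a short computation of the one-dimensional descent-type inequality for the function $t\mapsto f(x-t\eta\nabla f(x))$, combined with convexity, shows the gradient norm is monotone nonincreasing in $t$ there; hence $T=1$ and $g(1)\le g(0)\le G$. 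The main obstacle I anticipate is the bookkeeping around the radius: making sure that every auxiliary point used in the co-coercivity / descent argument (in particular the point $x^+ - \tfrac{1}{L}(\nabla f(x^+)-\nabla f(x))$ or its analogue) stays within $\cB(x,r(G))$ so that \eqref{eq:lsl} is legitimately applicable — this is exactly the kind of ``the Hessian bound only holds along the trajectory'' caveat flagged in the introduction, and it is why both stepsize constraints $\eta\le 2/\ell(G)$ and $\eta\le r(G)/(2G)$ are needed.
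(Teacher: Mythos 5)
Your proposal is correct and follows essentially the same route as the paper: the paper isolates your ``local co-coercivity'' step as Lemma~\ref{lem:weird}, proving $\innerpc{\nabla f(x)-\nabla f(x^+)}{x-x^+}\ge \frac{1}{\smcst}\norm{\nabla f(x)-\nabla f(x^+)}^2$ via the auxiliary functions $\phi_x(w)=f(w)-\innerpc{\nabla f(x)}{w}$ and $\phi_{x^+}$, with exactly the radius check you flag (the auxiliary point $x^+-\frac{1}{\smcst}(\nabla f(x^+)-\nabla f(x))$ is within $2\norm{x^+-x}\le r(G)$ of $x$, which is why one needs the step to lie in the half-radius ball, i.e.\ $\eta\le r(G)/(2G)$), and then expands $\norm{\nabla f(x^+)}^2$ and uses $\eta\le 2/\ell(G)$ to make the coefficient $1-\frac{2}{\eta \smcst}$ nonpositive, just as you sketch.
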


Lemma~\ref{lem:convex_gd_one_step} suggests that for gradient descent \eqref{eq:gd} with a small enough stepsize, if the gradient norm at $x_t$ is bounded by $G$, then we have $\norm{\nabla f(x_{t+1})}\le\norm{\nabla f(x_t)}\le G$, i.e., the gradient norm is also bounded by $G$ at $t+1$. In other words, the gradient norm is indeed a non-increasing potential function for gradient descent in the convex setting.
With a standard induction argument, we can show that $\norm{\nabla f(x_t)}\le \norm{\nabla f(x_0)}$ for all $t\ge 0$. As discussed below Lemma~\ref{lem:lsl}, then we can basically apply the classical analysis to obtain the convergence guarantee in the convex setting as in the following theorem, whose proof is deferred to Appendix~\ref{app:convex_gd}.

\begin{theorem}
\label{thm:convex-gd}
Suppose $f$ is convex and $(r,\ell)$-smooth. Denote $G:=\norm{\nabla f(x_0)}$ and $\smcst:=\ell(G)$, then the iterates generated by \eqref{eq:gd} with $\eta\le \min\left\{\frac{1}{L}, \frac{r(G)}{2G}\right\}$ satisfy $\norm{\nabla f(x_t)}\le G$ for all $t\ge 0$ and
\begin{align*}
    f(x_T)-f^* \leq \frac{ \norm{x_0-x^*}^2}{2 \eta T }.
\end{align*}
\end{theorem}

Since $\eta$ is a constant independent of $\epsilon$ or $T$, Theorem~\ref{thm:convex-gd} achieves the classical $\cO(1/T)$ rate, or $\cO(1/\epsilon)$ gradient complexity to achieve an $\epsilon$-sub-optimal point, under the generalized smoothness condition. Since strongly convex functions are a subset of convex functions, Lemma~\ref{lem:convex_gd_one_step} still holds for them. Then we immediately obtain the following result in the strongly convex setting, whose proof is deferred to Appendix~\ref{app:convex_gd}.
\begin{theorem}
\label{thm:strongly-convex-gd}
Suppose $f$ is $\mu$-strongly-convex and $(r,\ell)$-smooth. Denote  $G:=\norm{\nabla f(x_0)}$ and $\smcst:=\ell(G)$, then the iterates generated by \eqref{eq:gd} with $\eta\le \min\left\{\frac{1}{L}, \frac{r(G)}{2G}\right\}$ satisfy $\norm{\nabla f(x_t)}\le G$ for all $t\ge 0$ and
\begin{align*}
    f(x_T) - f^* \leq \frac{\mu(1-\eta\mu)^T}{2(1-(1-\eta\mu)^T)} \norm{x_0 - x^*}^2.
\end{align*}
\end{theorem}

Theorem~\ref{thm:strongly-convex-gd} gives a linear convergence rate and the $\cO((\eta\mu)^{-1}\log(1/\epsilon))$ gradient complexity to achieve an $\epsilon$-sub-optimal point. Note that for $\ell$-smooth functions, we have $\frac{r(G)}{G}=\frac{1}{\smcst}$ (see Remark~\ref{remark:lsl_for_ell_smooth}), which means we can choose $\eta=\frac{1}{2\smcst}$. Then we obtain the $\cO(\kappa\log(1/\epsilon))$ rate, where $\kappa:=\smcst/\mu$ is the local condition number around the initial point $x_0$. For standard Lipschitz smooth functions, it reduces to the classical rate of gradient descent.

\subsection{Nesterov's accelerated gradient method}
\label{subsec:nag}

\begin{algorithm}[t]
\caption{Nesterov's Accelerated Gradient Method (NAG)}
\label{algo:nesterov}
\begin{algorithmic}[1]
\INPUT A convex and $\ell$-smooth function $f$, stepsize $\eta$, initial point $x_0$
\STATE \textbf{Initialize} $z_0=x_0$, $B_0 = 0$, and $A_0=1/\eta$.
\FOR{ $t=0,...$ }
\STATE $B_{t+1} = B_t + \frac{1}{2}\left( 1+ \sqrt{ 4B_t+1 } \right)$
\STATE $A_{t+1} = B_{t+1} + 1/\eta$
\STATE $y_t = x_t + (1- {A_t}/{A_{t+1}})(z_t-x_t)$
\STATE $x_{t+1} = y_t - \eta \nabla f(y_t)$
\STATE $z_{t+1} = z_t - \eta(A_{t+1}-A_t)\nabla f(y_t)$
\ENDFOR
\end{algorithmic}
\end{algorithm}

In the case of convex and standard Lipschitz smooth functions, it is well known that Nesterov's accelerated gradient method (NAG) achieves the optimal $\cO(1/T^2)$ rate. In this section, we show that under the $\ell$-smoothness condition with a \emph{sub-quadratic} $\ell$, the optimal $\cO(1/T^2)$ rate can be achieved by a slightly modified version of NAG shown in Algorithm~\ref{algo:nesterov}, the only difference between which and the classical NAG is that the latter directly sets $A_{t+1}=B_{t+1}$ in Line~4. Formally, we have the following theorem, whose proof is deferred to Appendix~\ref{app:convex_nag}.

\begin{theorem}
    \label{thm:nesterov-main}
    Suppose $f$ is convex and $\ell$-smooth where $\ell$ is sub-quadratic. Then there always exists  a constant $G$ satisfying $G \ge \max\Set{8\sqrt{\ell(2G)( (f(x_0) - f^*) + \norm{x_0-x^*}^2)}, \norm{\nabla f(x_0)}}.$
    Denote $\smcst:=\ell(2G)$ and choose $\eta\le\min\left\{\frac{1}{16\smcst^2},\frac{1}{2\smcst}\right\}$. The iterates generated by Algorithm~\ref{algo:nesterov} satisfy 
    \begin{align*}
        f(x_{T})- f^* \leq \frac{4(f(x_0)-f^*) + 4\norm{x_0-x^*}^2}{\eta T^2 + 4}.
    \end{align*}
    
    \end{theorem}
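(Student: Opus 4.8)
The plan is to run the classical potential-function analysis for Nesterov's accelerated gradient method, but carefully restricted to the region where the gradients along the trajectory stay bounded, using the "break the circularity by induction" idea advertised in the introduction. Concretely, I would first show that there exists a finite constant $G$ satisfying the fixed-point-type inequality $G\ge\max\{8\sqrt{\ell(2G)((f(x_0)-f^*)+\norm{x_0-x^*}^2)},\norm{\nabla f(x_0)}\}$: since $\ell$ is sub-quadratic, $\sqrt{\ell(2G)}=o(G)$ as $G\to\infty$, so the right-hand side grows strictly slower than $G$ and such a $G$ exists. Fix this $G$, set $\smcst:=\ell(2G)$, and note that by Lemma~\ref{lem:lsl} (and the remark after it for $\ell$-smooth functions), at any point $y$ with $\norm{\nabla f(y)}\le G$ we get the standard descent-type inequalities \eqref{eq:lsl} with effective smoothness constant $\smcst$ on the ball $\cB(y,G/\smcst)$.

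Next I would set up the induction. Define the standard NAG Lyapunov function, something like $\Phi_t := A_t(f(x_t)-f^*) + \frac{1}{2}\norm{z_t-x^*}^2$, and let the induction hypothesis at step $t$ be: (i) $\norm{\nabla f(x_t)}\le G$, and (ii) $\Phi_t\le \Phi_0 = \frac{1}{\eta}(f(x_0)-f^*) + \frac12\norm{x_0-x^*}^2$. For the inductive step, the classical NAG analysis (with the coefficients $B_{t+1}=B_t+\tfrac12(1+\sqrt{4B_t+1})$, which are chosen precisely so that $A_{t+1}-A_t = B_{t+1}-B_t$ and $(A_{t+1}-A_t)^2 = A_{t+1}$ up to the $1/\eta$ shift, giving $A_t = \Theta(t^2/\eta)$) shows $\Phi_{t+1}\le\Phi_t$ provided the descent inequality $f(x_{t+1})\le f(y_t) - \frac{\eta}{2}\norm{\nabla f(y_t)}^2$ holds at step $t$, which in turn holds once $\eta\le 1/\smcst$ and $\norm{\nabla f(y_t)}\le G$ so that Lemma~\ref{lem:lsl} applies at $y_t$ with $\norm{x_{t+1}-y_t}=\eta\norm{\nabla f(y_t)}\le \eta G\le G/\smcst = r(G)$. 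So the real work is to propagate the gradient bound: I must show $\norm{\nabla f(y_t)}\le G$ and $\norm{\nabla f(x_{t+1})}\le G$ from the hypothesis. For this I would bound $\norm{\nabla f(y_t)}$ via $\norm{y_t - x^*}$: from $\Phi_t\le\Phi_0$ we control both $f(x_t)-f^*$ and $\norm{z_t-x^*}$, and since $y_t$ is a convex combination of $x_t$ and $z_t$ (note $1-A_t/A_{t+1}\in[0,1]$), and $x_t$ itself is close to the minimizer set in the sense that $f(x_t)-f^*$ is small, one gets a bound on $\norm{y_t-x^*}$ of order $\sqrt{(f(x_0)-f^*)+\norm{x_0-x^*}^2}$; then $\norm{\nabla f(y_t)}\le \smcst\norm{y_t-x^*}$ (by $L$-smoothness along the segment from $y_t$ to $x^*$, which needs a short argument that the whole segment lies in a region of bounded Hessian — this is where the stepsize condition $\eta\le 1/(16\smcst^2)$ and a more delicate covering of the segment by balls of radius $r(\cdot)$ enters), and plugging in the definition of $G$ gives $\norm{\nabla f(y_t)}\le G/2 < G$. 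A similar estimate handles $\norm{\nabla f(x_{t+1})}$.

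The main obstacle, and the step I expect to require the most care, is exactly this last one: converting the potential bound into a uniform gradient bound $\norm{\nabla f(y_t)}\le G$. The subtlety flagged in the paper's own footnote is that $\ell$-smoothness only bounds $\norm{\nabla^2 f}$ as a function of $\norm{\nabla f}$, so "$\norm{\nabla f(y)}\le\smcst\norm{y-x^*}$" is not immediate — one cannot just integrate the Hessian along $[x^*,y]$ without first knowing the gradient stays bounded along that segment. The clean way around this is a bootstrap/continuity argument along the segment: partition $[x^*,y_t]$ and use Lemma~\ref{lem:lsl} on successive balls of radius $r(G)$, showing inductively that the gradient norm stays below $G$ on the segment as long as $\norm{y_t-x^*}$ is small enough relative to $G/\smcst$; the quadratic-in-$\smcst$ stepsize restriction $\eta\le 1/(16\smcst^2)$ is what buys enough room. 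Once that lemma is in place, the rest is the textbook NAG telescoping: $\Phi_T\le\Phi_0$ gives $A_T(f(x_T)-f^*)\le \Phi_0$, and $A_T = B_T + 1/\eta \ge \frac{\eta T^2}{4}\cdot\frac1\eta + \frac1\eta$... more precisely one shows $A_T\ge (T^2+4)/(4\eta)$ from the recursion (by checking $B_t\ge t^2/4$ inductively, using $\sqrt{4B_t+1}\ge 2\sqrt{B_t}\ge t$), yielding $f(x_T)-f^*\le \Phi_0/A_T \le \frac{4(f(x_0)-f^*)+4\norm{x_0-x^*}^2}{\eta T^2 + 4}$, which is the claimed bound. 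I would also double-check the edge cases $t=0$ (where $y_0=x_0$, so $\norm{\nabla f(y_0)}=G_0\le G$ by the definition of $G$) to anchor the induction.
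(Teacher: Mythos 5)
Your overall scaffolding (existence of $G$ by sub-quadraticity, the potential $A_t(f(x_t)-f^*)+\tfrac{1}{2\eta}\norm{z_t-x^*}^2$, Lemma~\ref{lem:convex_gd_one_step}/Lemma~\ref{lem:lsl} for the descent inequality at $y_t$, $B_t\ge t^2/4$, and the final telescoping) matches the paper. But the step you yourself flag as the crux --- propagating the gradient bound to $y_t$ --- has a genuine gap as you set it up. You propose $\norm{\nabla f(y_t)}\le \smcst\norm{y_t-x^*}$ with $\norm{y_t-x^*}$ bounded through the potential. The potential, however, only controls $\norm{z_t-x^*}$ and $f(x_t)-f^*$; your claim that ``$x_t$ is close to the minimizer because $f(x_t)-f^*$ is small'' is false for merely convex $f$ (a flat convex function can have tiny suboptimality at points arbitrarily far from $x^*$), so $\norm{y_t-x^*}$, which mixes $x_t$ and $z_t$, is not bounded by $O\bigl(\sqrt{(f(x_0)-f^*)+\norm{x_0-x^*}^2}\,\bigr)$ by this route. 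Moreover, even granting such a constant bound $D$, converting it into $\norm{\nabla f(y_t)}\le G$ by bootstrapping balls of radius $r(\cdot)$ from $x^*$ (where $\nabla f=0$) requires roughly $D\le G/\ell(G)$, whereas the theorem's choice only gives $D\le G/(8\sqrt{\smcst})$, which is far too weak when $\smcst$ is large; repairing it by enlarging $G$ would need $G\gtrsim \ell(2G)\sqrt{\cdots}$, i.e.\ a \emph{sub-linear} $\ell$, not sub-quadratic. Finally, the stepsize condition $\eta\le 1/(16\smcst^2)$ cannot ``buy room'' along the segment $[x^*,y_t]$, since $\norm{y_t-x^*}$ does not shrink with $\eta$.

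The paper avoids all of this by never trying to control distances to $x^*$ beyond $\norm{z_t-x^*}$. It bounds $\norm{y_t-x_t}$ instead: writing $z_t-x_t$ (hence $y_t-x_t$) as an explicit weighted sum of the past gradients $\eta\nabla f(y_s)$, it bounds each $\norm{\nabla f(y_s)}$ by $\norm{\nabla f(x_{s+1})}+\eta\smcst\norm{\nabla f(y_s)}$, then uses Lemma~\ref{lem:reverse-PL-main}, $\norm{\nabla f(x_{s+1})}^2\le 2\smcst(f(x_{s+1})-f^*)$, together with the telescoped potential bound $f(x_{s+1})-f^*\le(\eta A_{s+1})^{-1}\bigl((f(x_0)-f^*)+\norm{x_0-x^*}^2\bigr)$ and the weight estimate of Lemma~\ref{lem:weights}, to get $\norm{y_t-x_t}\le 8\sqrt{\eta\smcst(\cdots)}/(1-\eta\smcst)\le G/(2\smcst)\le r(G)$; this is exactly where $\eta\le 1/(16\smcst^2)$ enters, because $\norm{y_t-x_t}$ scales like $\sqrt{\eta}$. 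Then $\norm{\nabla f(y_t)}\le\norm{\nabla f(x_t)}+\smcst\norm{y_t-x_t}\le G/4+G/2\le G$ closes the induction. If you replace your distance-to-$x^*$ argument with this ``decaying gradient via reverse-PL plus weighted-sum'' estimate of $\norm{y_t-x_t}$, the rest of your outline goes through.
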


It is easy to note that Theorem~\ref{thm:nesterov-main} achieves the accelerated $\cO(1/T^2)$ convergence rate, or equivalently the $\cO(1/\sqrt{\epsilon})$ gradient complexity to find an $\epsilon$-sub-optimal point, which is optimal among gradient-based methods~\citep{nesterov2003introductory}.

In order to prove Theorem~\ref{thm:nesterov-main}, we also use induction to show the gradients along the trajectory of Algorithm~\ref{algo:nesterov} are bounded by $G$. However, unlike gradient descent, the gradient norm is no longer a potential function or monotonically non-increasing, which makes the induction analysis more challenging. Suppose that we have shown $\norm{\nabla f(y_s)}\le G$ for $s<t$. To complete the induction, it suffices to prove $\norm{\nabla f(y_t)}\le G$. Since $x_t=y_{t-1}-\eta\nabla f(y_{t-1})$ is a gradient descent step by Line~6 of Algorithm~\ref{algo:nesterov}, Lemma~\ref{lem:convex_gd_one_step} directly shows $\norm{\nabla f(x_t)}\le G$. In order to also bound $\norm{\nabla f(y_t)}$, we try to control $\norm{y_t-x_t}$, which is the most challenging part of our proof. Since $y_t-x_t$ can be expressed as a linear combination of past gradients $\{\nabla f(y_s)\}_{s<t}$, it might grow linearly with $t$ if we simply apply $\norm{\nabla f(y_s)}\le G$ for $s<t$. Fortunately, Lemma~\ref{lem:reverse-PL-main} allows us to control the gradient norm with the function value. Thus if the function value is decreasing sufficiently fast, which can be shown by following the standard Lyapunov analysis of NAG, we are able to obtain a good enough bound on $\norm{\nabla f(y_s)}$ for $s<t$, which allows us to control $\norm{y_t-x_t}$. We defer the detailed proof to Appendix~\ref{app:convex_nag}. 

Note that Theorem~\ref{thm:nesterov-main} requires a smaller stepsize $\eta=\cO(1/L^2)$, compared to the classical $\cO(1/L)$ stepsize for standard Lipschitz smooth functions. The reason is we require a small enough stepsize to get a good enough bound on $\norm{y_t-x_t}$. However, if the function is further assumed to be $\ell$-smooth with a \emph{sub-linear} $\ell$, the requirement of stepsize can be relaxed to $\eta=\cO(1/L)$, similar to the classical requirement. See Appendix~\ref{app:convex_nag} for the details.

In the strongly convex setting, we can also prove convergence of NAG with different $\{A_t\}_{t\ge0}$ parameters when $f$ is $\ell$-smooth with a sub-quadratic $\ell$, or $(\rho,L_0,L_\rho)$-smooth with $\rho<2$. The rate can be further improved when $\rho$ becomes smaller. However, since the constants $G$ and $\smcst$ are different for GD and NAG, it is not clear whether the rate of NAG is faster than that of GD in the strongly convex setting. We will present the detailed result and analysis in Appendix~\ref{app:strongly_convex_nag}.

\section{Non-convex setting}
\label{sec:nonconvex}

In this section, we present convergence results of gradient descent (GD) and stochastic gradient descent (SGD) in the non-convex setting.

\subsection{Gradient descent}
\label{subsec:nonconvex_gd}

Similar to the convex setting, we still want to bound the gradients along the trajectory. However, in the non-convex setting, the gradient norm is not necessarily non-increasing. Fortunately, similar to the classical analyses, the function value is still non-increasing and thus a potential function, as formally shown in the following lemma, whose proof is deferred to Appendix~\ref{app:nonconvex_gd}.

\begin{lemma}
    \label{lem:nonconvex_gd_one_step}
Suppose $f$ is $\ell$-smooth where $\ell$ is sub-quadratic. For any given $F\ge0$, let $G:=\sup\left\{u\ge0\mid u^2\le{2\ell(2u)}\cdot F\right\}$ and $\smcst:=\ell(2G)$. For any $x\in\cX$ satisfying $f(x)-f^*\le F$, define $x^+:=x-\eta\nabla f(x)$ where $\eta\le 2/\smcst$, we have $x^+\in\cX$ and $f(x^+)\le f(x)$.
\end{lemma}

Then using a standard induction argument, we can show $f(x_t)\le f(x_0)$ for all $t\ge0$. According to Corollary~\ref{cor:bd_g_by_f}, it implies bounded gradients along the trajectory. Therefore, we can show convergence of gradient descent as in the following theorem, whose proof is deferred to Appendix~\ref{app:nonconvex_gd}.

\begin{theorem}
    \label{thm:nonconvex_gd}
    Suppose $f$ is $\ell$-smooth where $\ell$ is sub-quadratic. Let $G:=\sup\left\{u\ge0\mid u^2\le{2\ell(2u)}\cdot (f(x_0)-f^*)\right\}$ and $\smcst:=\ell(2G)$. If $\eta\le 1/\smcst$, the iterates generated by \eqref{eq:gd} satisfy $\norm{\nabla f(x_t)}\le G$ for all $t\ge 0$ and
    \begin{align*}
         \frac{1}{T}\sum_{t<T}\norm{\nabla f(x_t)}^2\le \frac{2(f(x_0)-f^*)}{\eta T}.
    \end{align*}
    \end{theorem}

    It is clear that Theorem~\ref{thm:nonconvex_gd} gives the classical $\cO(1/\epsilon^{2})$ gradient complexity to achieve an $\epsilon$-stationary point, which is optimal as it matches the lower bound in \citep{Carmon2017LowerBF}. 
\subsection{Stochastic gradient descent}
In this part, we present the convergence result for stochastic gradient descent defined as follows.
\begin{align}
\label{eq:sgd}
    x_{t+1}=x_t-\eta  g_t,
\end{align}
where $g_t$ is an estimate of the gradient $\nabla f(x_t)$. We consider the following standard assumption on the gradient noise $\epsilon_t:=g_t-\nabla f(x_t)$.
\begin{assumption}
\label{ass:noise}
$\E_{t-1}[\epsilon_t]=0$ and $\E_{t-1}\left[\norm{\epsilon_t}^2\right]\le\sigma^2$ for some $\sigma\ge 0$, where $\E_{t-1}$ denotes the expectation conditioned on $\{g_s\}_{s<t}$.

\end{assumption}

Under Assumption~\ref{ass:noise}, we can obtain the following theorem.
\begin{theorem}
    \label{thm:sgd} Suppose $f$ is $\ell$-smooth where $\ell$ is sub-quadratic. For any $0<\delta<1$, we denote $F:=8(f(x_0)-f^*+\sigma)/\delta$ and $G:=\sup\{u\ge0\mid u^2\le2\ell(2u)\cdot F\}<\infty$. Denote $\smcst:=\ell(2G)$ and choose
           $\eta\le \min\left\{\frac{1}{2\smcst},\frac{1}{4G\sqrt{T}}\right\}$ and $T\ge \frac{F}{\eta \epsilon^2}$ for any $\epsilon>0$. Then with probability at least $1-\delta$, the iterates generated by \eqref{eq:sgd} satisfy $\norm{\nabla f(x_t)}\le G$ for all $t<T$ and $$\frac{1}{T}\sum_{t<T}\norm{\nabla f(x_t)}^2\le\epsilon^2.$$
\end{theorem}

As we choose $\eta=\cO(1/\sqrt{T})$, Theorem~\ref{thm:sgd} gives the classical $\cO(1/\epsilon^{4})$ gradient complexity, where we ignore non-leading terms. This rate is optimal as it matches the lower bound in \citep{Arjevani2019LowerBF}. The key to its proof is again to 
bound the gradients along the trajectory. However, bounding gradients in the stochastic setting is much more challenging than in the deterministic setting, especially with the heavy-tailed noise in Assumption~\ref{ass:noise}. We briefly discuss some of the challenges as well as our approach below and defer the detailed proof of Theorem~\ref{thm:sgd} to Appendix~\ref{app:nonconvex_sgd}. 

First, due to the existence of heavy-tailed gradient noise as considered in Assumption~\ref{ass:noise}, neither the gradient nor the function values is non-increasing. The induction analyses we have used in the deterministic setting hardly work. In addition, to apply Lemma~\ref{lem:lsl}, we need to control the update at each step and make sure $\norm{x_{t+1}-x_t} = \eta\norm{g_t}\le G/\smcst$. However, $g_t$ might be unbounded due to the potentially unbounded gradient noise.

To overcome these challenges, we define the following random variable $\tau$.
\begin{align}
    \label{eq:def_tau_sgd}
        \tau_1 :=&\min\{t\mid f(x_{t+1})-f^*>F \} \land T,\nonumber\\
        \tau_2 :=& \min\left\{t\left\vert \norm{\epsilon_t}>\frac{G}{5\eta\smcst}\right. \right\}\land T,\\
        \tau:=&\min\{\tau_1,\tau_2\},\nonumber
\end{align}
where we use $a\land b$ to denote $\min\{a,b\}$ for any $a,b\in\R$. Then at least before time $\tau$, we know that the function value and gradient noise are bounded, where the former also implies bounded gradients according to Corollary~\ref{cor:bd_g_by_f}. Therefore, it suffices to show the probability of $\tau<T$ is small, which means with a high probability, $\tau=T$ and thus gradients are always bounded before $T$.

Since both the gradient and noise are bounded for $t<\tau$, it is straightforward to bound the update $\norm{x_{t+1}-x_t}$, which allows us to use Lemma~\ref{lem:lsl} and other useful properties. However, it is still non-trivial to upper bound $\E[f(x_{\tau})-f^*]$ as $\tau$ is a random variable instead of a fixed time step. Fortunately, $\tau$ is a stopping time with nice properties. That is because both ${ f(x_{t+1})}$ and ${\epsilon_t}=g_t-\nabla f(x_t)$ only depend on $\{g_s\}_{s\le t}$, i.e., the stochastic gradients up to $t$. Therefore, for any fixed $t$, the events $\{\tau>t\}$ only depend on $\{g_s\}_{s\le t}$, which show $\tau$ is a stopping time. Then with a careful analysis, we are still able to obtain an upper bound on $\E[f(x_{\tau})-f^*]=\cO(1)$.

On the other hand, $\tau<T$ means either $\tau=\tau_1<T$ or $\tau=\tau_2<T$. If $\tau=\tau_1<T$, by its definition, we know $f(x_{\tau+1})-f^*>F$. Roughly speaking, it also suggests $f(x_\tau)-f^*>F/2$. If we choose $F$ such that it is much larger than the upper bound on $\E[f(x_{\tau})-f^*]$ we just obtained, by Markov's inequality, we can show the probability of $\tau=\tau_1<T$ is small. In addition, by union bound and Chebyshev's inequality, the probability of $\tau_2<T$ can also be bounded by a small constant. Therefore, we have shown $\tau<T$. Then the rest of the analysis is not too hard following the classical analysis.

\subsection{Reconciliation with existing lower bounds}
\label{subsec:reconcile}
In this section, we reconcile our convergence results for constant-stepsize GD/SGD in the non-convex setting with existing lower bounds in \citep{zhang2019gradient} and \citep{wang2022provable}, based on which the authors claim that adaptive methods such as GD/SGD with clipping and Adam are provably faster than non-adaptive GD/SGD. This may seem to contradict  our convergence results. In fact, we show that any gain in adaptive methods is at most by constant factors, as GD and SGD already achieve the optimal rates in the non-convex setting.

\citep{zhang2019gradient} provides both upper and lower complexity bounds for constant-stepsize GD for $(L_0,L_1)$-smooth functions, and shows that its complexity is $\cO(M\epsilon^{-2})$, where 
$$M:=\sup\{\norm{\nabla f(x)}\mid f(x)\le f(x_0)\}$$
is the supremum gradient norm below the level set of the initial function value. If $M$ is very large, then the $\cO(M\epsilon^{-2})$ complexity can be viewed as a negative result, and as evidence that constant-stepsize GD can be slower than GD with gradient clipping, since in the latter case, they obtain the $\cO(\epsilon^{-2})$ complexity without $M$. However, based on our Corollary~\ref{cor:bd_g_by_f}, their $M$ can be actually bounded by our $G$, which is a constant. Therefore, the gain in adaptive methods is at most by constant factors.

\citep{wang2022provable} further provides a lower bound which shows non-adaptive GD may diverge for some examples. However, their counter-example does not allow the stepsize to depend on the initial sub-optimality gap. In contrast, our stepsize $\eta$ depends on the effective smoothness constant $L$, which depends on the initial sub-optimality gap through $G$. Therefore, there is no contradiction here either.  We should point out that in the practice of training neural networks, the stepsize is usually tuned after fixing the loss function and initialization, so it does depend on the problem instance and initialization.

\subsection{Lower bound}

For $(\rho,L_0,L_\rho)$-smooth functions with $\rho<2$, it is easy to verify that the constant $G$ in both Theorem~\ref{thm:nonconvex_gd} and Theorem~\ref{thm:sgd} is a polynomial function of problem-dependent parameters like $L_0,L_\rho, f(x_0)-f^*, \sigma$, etc. In other words, GD and SGD are provably efficient methods in the non-convex setting for $\rho<2$. In this section, we show that the requirement of $\rho<2$ is necessary in the non-convex setting with the lower bound for GD in the following Theorem~\ref{thm:gd-nonconvex-lb-formal}, whose proof is deferred in Appendix~\ref{app:lower_bound}. Since SGD reduces to GD when there is no gradient noise, it is also a lower bound for SGD.

\begin{theorem}  \label{thm:gd-nonconvex-lb-formal}
	Given $L_0, L_2, G_0, \Delta_0 > 0$ satisfying $L_2 \Delta_0 \ge 10$, for any $ \eta \ge 0$, there exists a $(2, L_0, L_2)$-smooth function $f$ that satisfies Assumptions~\ref{ass:standard} and~\ref{ass:bounded_from_below}, and initial point $x_0$ that satisfies $\|\nabla f(x_0)\| \le G_0$ and $f(x_0) - f^{\ast} \le \Delta_0$, such that gradient descent with stepsize $\eta$~\eqref{eq:gd} either cannot reach a $1$-stationary point or takes at least $\exp(L_2 \Delta_0/8) / 6$ steps to reach a $1$-stationary point.
\end{theorem}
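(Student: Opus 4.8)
The plan is to exhibit an explicit one-dimensional $(2,L_0,L_2)$-smooth function on a bounded open domain whose gradient blows up near the boundary, so that from a starting point with controlled gradient and sub-optimality, a constant-stepsize gradient descent step either overshoots the domain (or a ``bad'' region) or makes only exponentially small progress. The natural candidate is (a scaled/shifted version of) the logarithmic-type function, e.g. $f(x) = -\frac{c}{L_2}\log(\cos(\sqrt{L_2}\,x))$ type construction or, more simply, $f(x) = -\frac{1}{L_2}\log(b - L_2 x)$ on $\cX = (-\infty, b/L_2)$, which by the discussion after Proposition~\ref{prop:self-concordant} is $(2, L_0, L_2)$-smooth for an appropriate $L_0$ (indeed $|h'''| \le 2(h'')^{3/2}$ for $h = -\log$ composed with an affine map, and adding a benign quadratic to the tail handles the $L_0$ term away from the singularity). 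First I would verify the smoothness: compute $f'(x) = 1/(b-L_2x)$, $f''(x) = L_2/(b-L_2x)^2 = L_2 (f'(x))^2$, so $\|\nabla^2 f(x)\| = L_2\|\nabla f(x)\|^2 \le L_0 + L_2 \|\nabla f(x)\|^2$ trivially; closedness holds since $f\to+\infty$ as $x\to b/L_2$.

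Next I would calibrate the starting point and the scaling. Choose $x_0$ so that $f(x_0) - f^* \le \Delta_0$ and $\|\nabla f(x_0)\| \le G_0$; since $f$ has no finite minimum on this half-line (it is decreasing as $x\to-\infty$) one instead works on a truncated window or uses a symmetric double-barrier function $f(x) = -\frac{1}{L_2}\log(\cos(\sqrt{L_2}\,x))$ on $(-\pi/(2\sqrt{L_2}), \pi/(2\sqrt{L_2}))$, which has a genuine global minimum at $x=0$ with $f^*=0$, satisfies $f''(x) = L_2(1 + (f'(x))^2) = L_2 + L_2(\tan(\sqrt{L_2}x))^2$ — wait, more carefully $f'(x) = \frac{1}{\sqrt{L_2}}\tan(\sqrt{L_2}x)$, $f''(x) = \sec^2(\sqrt{L_2}x) = 1 + (\sqrt{L_2}f'(x))^2$, hence $\|\nabla^2 f\| \le 1 + L_2\|\nabla f\|^2$, giving $(2, 1, L_2)$-smoothness (rescale to get $L_0$). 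Then $f(x) = -\frac{1}{L_2}\log\cos(\sqrt{L_2}x)$, and $f(x_0)\le \Delta_0$ forces $\cos(\sqrt{L_2}x_0) \ge e^{-L_2\Delta_0}$, i.e. $x_0$ can be taken close to the barrier at distance $\sim e^{-L_2\Delta_0/2}/\sqrt{L_2}$ from it, where $f'(x_0) = \frac{1}{\sqrt{L_2}}\tan(\sqrt{L_2}x_0)$ is of order $e^{L_2\Delta_0/2}/\sqrt{L_2}$ — so to also meet $\|\nabla f(x_0)\|\le G_0$ I would instead start near the \emph{bottom}: pick $x_0$ small so $\|\nabla f(x_0)\|$ is tiny, and argue that to \emph{reach} a $1$-stationary point GD must traverse toward $0$ from wherever it starts, but actually the cleaner route is: start at a point where $\|\nabla f(x_0)\|=1$ exactly (so it is \emph{not yet} $1$-stationary only if we demand strict, or start just above $1$), and show one GD step with \emph{any} $\eta$ either leaves the domain (since $x_0 - \eta f'(x_0)$ exits $(-\pi/(2\sqrt{L_2}), \pi/(2\sqrt{L_2}))$ when $\eta$ is not extremely small) or, when $\eta$ is small enough to stay in the domain, the per-step decrease in $\|\nabla f\|$ is at most a factor $(1 - \Theta(\eta L_2 \cdot \text{small}))$, forcing $\gtrsim \exp(L_2\Delta_0/8)$ steps. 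I would make this rigorous by the standard trick of splitting on the magnitude of $\eta$: if $\eta \ge \eta_0$ for a threshold $\eta_0 \sim (b-L_2 x_0)/(G\cdot L_2)$ the iterate overshoots past the minimum and up the other barrier (or out of $\cX$), contradicting convergence; if $\eta < \eta_0$, a contraction-rate estimate on $g(x) = x - \eta f'(x)$ near the barrier shows $|f'(x_{t+1})| \ge (1-\eta L_2 f'(x_0)\cdot(\text{const}))|f'(x_t)|$ type bound is the wrong direction, so instead track the \emph{distance to the minimizer} $d_t = |x_t|$ and show $d_{t+1} \ge d_t(1 - \eta L_2 \sec^2)$ fails — rather, since $f$ is convex, $d_t$ decreases, but the number of steps to bring $\|\nabla f\|$ from its initial value down to $1$ is governed by how slowly $d_t\to 0$, which near the barrier is governed by $d_{t+1} \approx d_t - \eta f'(d_t)$ with $f'$ huge, so progress is $O(\eta)$ per step in $x$ but $f'$ only drops to $1$ once $x_t = O(1/\sqrt{L_2})$, requiring $\gtrsim (\text{initial distance from barrier to }O(1)\text{ region})/\eta$ steps, and plugging the constraint that small $\eta$ is forced by non-overshooting gives the $\exp(L_2\Delta_0/8)$ count.

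The main obstacle I expect is the careful bookkeeping in the ``small $\eta$'' case: one must show simultaneously (i) that $\eta$ must be smaller than roughly $c/(L_2 \cdot e^{L_2\Delta_0/2})$ for GD not to escape $\cX$ on the very first step or for it to converge at all, and (ii) that with such a tiny $\eta$, the number of iterations needed for $\|\nabla f(x_t)\|$ to fall below $1$ is at least $\exp(L_2\Delta_0/8)/6$, which requires a matching \emph{lower} bound on how long $x_t$ stays in the high-gradient region. This is a two-sided estimate on the trajectory of the scalar map $x \mapsto x - \eta f'(x)$ near a logarithmic barrier, and getting the constants to line up with the stated hypothesis $L_2\Delta_0 \ge 10$ and conclusion $\exp(L_2\Delta_0/8)/6$ is where the real work lies; everything else (verifying $(2,L_0,L_2)$-smoothness, Assumptions~\ref{ass:standard}–\ref{ass:bounded_from_below}, and the bounds $\|\nabla f(x_0)\|\le G_0$, $f(x_0)-f^*\le\Delta_0$) is routine calculus on the explicit $f$.
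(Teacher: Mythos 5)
There is a genuine gap, and it is structural rather than a matter of bookkeeping. Your hard instance is a \emph{convex} log-barrier ($-\tfrac{1}{L_2}\log\cos(\sqrt{L_2}x)$ or a one-sided variant), so its huge gradients live only in a thin layer next to the domain boundary, at function values \emph{above} the start, while the constraints $\norm{\nabla f(x_0)}\le G_0$ and $f(x_0)-f^*\le\Delta_0$ force $x_0$ into the benign region on the minimizer's side of that layer. Gradient descent on such an instance moves downhill, away from the barrier, and for a \emph{moderate} stepsize (say $\eta$ of order $1/(L_0+L_2\max\{1,G_0\}^2)$, which is fixed by the problem parameters and not exponentially small) the paper's own convex results apply: by Lemma~\ref{lem:convex_gd_one_step} the gradient norm is non-increasing, and by Theorem~\ref{thm:convex-gd} together with Lemma~\ref{lem:reverse-PL-main} the iterates reach a $1$-stationary point after polynomially many steps, with no $e^{L_2\Delta_0/8}$ factor. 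Since the theorem must be proved for \emph{every} $\eta\ge 0$ (the adversary sees $\eta$ first, but your family offers no mechanism that defeats this middle range), your dichotomy ``either the first step exits the domain or $\eta$ is exponentially small'' is false: a step of length $\eta\cdot O(1)$ from a point with gradient $\approx 1$ stays well inside a domain of width $\Theta(1/\sqrt{L_0L_2})$ and moves toward the minimizer, so nothing forces $\eta\lesssim e^{-L_2\Delta_0/8}$; moreover, starting exactly at gradient $1$ makes $x_0$ itself $1$-stationary, and starting ``just above $1$'' is resolved in $O(1)$ steps for such $\eta$. Even in your large-$\eta$ branch, ``overshoots the domain or the minimum'' does not by itself yield ``cannot reach a $1$-stationary point'': an overshoot across a symmetric convex valley can land near the minimizer, so non-convergence must be engineered, not asserted. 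A general way to see the obstruction: whenever the descent inequality of Lemma~\ref{lem:lsl} holds along the trajectory with effective constant $L$ and $\eta\le 1/L$, the number of iterations with $\norm{\nabla f(x_t)}\ge 1$ is at most $2\Delta_0/\eta$; to make this exponential one must either force $\eta$ itself to be exponentially small or break the descent property, and a convex barrier reachable only from admissible starts does neither.

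The paper's construction supplies exactly the ingredients you are missing. Its instance is a piecewise logarithmic valley that is \emph{non-convex} on the outer branch: a deep, narrow well whose walls have slope of order $1/\eta_0$ with $\log(1/\eta_0)\asymp L_2\Delta_0$, placed \emph{between} the admissible starting plateau and the stationary point, at function values \emph{below} $f(x_0)$ --- possible precisely because the well is logarithmic, so exponential steepness costs only $\Theta(\Delta_0)$ of sub-optimality budget. The steepness is calibrated to the given stepsize: for $\eta$ above the threshold, the starting point is chosen so that the trajectory lands exactly on a point $z$ with $z\mapsto -z\mapsto z$, a permanent two-cycle (this is the rigorous replacement for your ``overshoot'' step); for $\eta$ below the threshold, each step advances by at most $\sqrt{\eta}<\sqrt{\eta_0}$, so crossing the $\Theta(c)$-length approach region alone takes at least $c\eta_0^{-1/2}/6\ge e^{L_2\Delta_0/8}/6$ iterations. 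Without an $\eta$-dependent high-gradient region lying on the forced descent path and an explicit non-convergence mechanism for larger stepsizes, the two-sided estimates you defer to ``the real work'' cannot be made to close.
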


\section{Conclusion}
\label{sec:conslusion}

In this paper, we generalize the standard Lipschitz smoothness as well as the $(L_0,L_1)$-smoothness~\citep{Zhang2020ImprovedAO} conditions to the $\ell$-smoothness condition, and develop a new approach for analyzing the convergence under this condition. The approach uses different techniques for several methods and settings to bound the gradient along the optimization trajectory, which allows us to obtain stronger results for both convex and non-convex problems. We obtain the classical rates for GD/SGD/NAG methods in the convex and/or non-convex setting. Our results challenge the folklore belief on the necessity of adaptive methods for generalized smooth functions.

There are several interesting future directions following this work. First, the $\ell$-smoothness can perhaps be further generalized by allowing $\ell$ to also depend on potential functions in each setting, besides the gradient norm. {In addition, it would also be interesting to see if the techniques of bounding gradients along the trajectory that we have developed in this and the concurrent work~\citep{li2023convergence} can be further generalized to other methods and problems and to see whether more efficient algorithms can be obtained.} Finally, although we justified the necessity of the requirement of $\ell$-smoothness with a \emph{sub-quadratic} $\ell$ in the non-convex setting, it is not clear whether it is also necessary for NAG in the convex setting, another interesting open problem. 

\section*{Acknowledgments}
{This work was supported, in part, by the MIT-IBM Watson
AI Lab and ONR Grants N00014-20-1-2394 and N00014-23-1-2299. We also acknowledge support from DOE under grant DE-SC0022199, and NSF through awards DMS-2031883, DMS-1953181, and DMS-2022448 (TRIPODS program).}




\bibliography{weakly_smooth}

\begin{thebibliography}{18}
\providecommand{\natexlab}[1]{#1}
\providecommand{\url}[1]{\texttt{#1}}
\expandafter\ifx\csname urlstyle\endcsname\relax
  \providecommand{\doi}[1]{doi: #1}\else
  \providecommand{\doi}{doi: \begingroup \urlstyle{rm}\Url}\fi

\bibitem[Arjevani et~al.(2019)Arjevani, Carmon, Duchi, Foster, Srebro, and
  Woodworth]{Arjevani2019LowerBF}
Yossi Arjevani, Yair Carmon, John~C. Duchi, Dylan~J. Foster, Nathan Srebro, and
  Blake~E. Woodworth.
\newblock Lower bounds for non-convex stochastic optimization.
\newblock \emph{Mathematical Programming}, 199:\penalty0 165 -- 214, 2019.

\bibitem[Boyd and Vandenberghe(2004)]{boyd2004convex}
Stephen~P Boyd and Lieven Vandenberghe.
\newblock \emph{Convex optimization}.
\newblock Cambridge university press, 2004.

\bibitem[Carmon et~al.(2017)Carmon, Duchi, Hinder, and
  Sidford]{Carmon2017LowerBF}
Yair Carmon, John~C. Duchi, Oliver Hinder, and Aaron Sidford.
\newblock Lower bounds for finding stationary points i.
\newblock \emph{Mathematical Programming}, pages 1--50, 2017.

\bibitem[Chen et~al.(2023)Chen, Zhou, Liang, and Lu]{chen2023generalizedsmooth}
Ziyi Chen, Yi~Zhou, Yingbin Liang, and Zhaosong Lu.
\newblock Generalized-smooth nonconvex optimization is as efficient as smooth
  nonconvex optimization.
\newblock \emph{arXiv preprint arXiv:2303.02854}, 2023.

\bibitem[Crawshaw et~al.(2022)Crawshaw, Liu, Orabona, Zhang, and
  Zhuang]{Crawshaw2022RobustnessTU}
Michael Crawshaw, Mingrui Liu, Francesco Orabona, Wei Zhang, and Zhenxun
  Zhuang.
\newblock Robustness to unbounded smoothness of generalized signsgd.
\newblock \emph{Advances in Neural Information Processing Systems},
  35:\penalty0 9955--9968, 2022.

\bibitem[d’Aspremont et~al.(2021)d’Aspremont, Scieur, and Taylor]{OPT-036}
Alexandre d’Aspremont, Damien Scieur, and Adrien Taylor.
\newblock Acceleration methods.
\newblock \emph{Foundations and Trends® in Optimization}, 5\penalty0
  (1-2):\penalty0 1--245, 2021.
\newblock ISSN 2167-3888.
\newblock \doi{10.1561/2400000036}.
\newblock URL \url{http://dx.doi.org/10.1561/2400000036}.

\bibitem[Faw et~al.(2023)Faw, Rout, Caramanis, and Shakkottai]{Faw2023BeyondUS}
Matthew Faw, Litu Rout, Constantine Caramanis, and Sanjay Shakkottai.
\newblock Beyond uniform smoothness: A stopped analysis of adaptive sgd.
\newblock \emph{ArXiv}, abs/2302.06570, 2023.

\bibitem[Ghadimi and Lan(2013)]{ghadimi2013stochastic}
Saeed Ghadimi and Guanghui Lan.
\newblock Stochastic first-and zeroth-order methods for nonconvex stochastic
  programming.
\newblock \emph{SIAM Journal on Optimization}, 23\penalty0 (4):\penalty0
  2341--2368, 2013.

\bibitem[Kingma and Ba(2014)]{Kingma2014AdamAM}
Diederik~P. Kingma and Jimmy Ba.
\newblock Adam: A method for stochastic optimization.
\newblock \emph{CoRR}, abs/1412.6980, 2014.

\bibitem[Li et~al.(2023)Li, Jadbabaie, and Rakhlin]{li2023convergence}
Haochuan Li, Ali Jadbabaie, and Alexander Rakhlin.
\newblock Convergence of adam under relaxed assumptions.
\newblock \emph{arXiv preprint arXiv:2304.13972}, 2023.

\bibitem[Nemirovskij and Yudin(1983)]{nemirovskij1983problem}
Arkadij~Semenovi{\v{c}} Nemirovskij and David~Borisovich Yudin.
\newblock \emph{Problem complexity and method efficiency in optimization}.
\newblock Wiley-Interscience, 1983.

\bibitem[Nesterov(2003)]{nesterov2003introductory}
Yurii Nesterov.
\newblock \emph{Introductory lectures on convex optimization: A basic course},
  volume~87.
\newblock Springer Science \& Business Media, 2003.

\bibitem[Qian et~al.(2021)Qian, Wu, Zhuang, Wang, and
  Xiao]{Qian2021UnderstandingGC}
Jiang Qian, Yuren Wu, Bojin Zhuang, Shaojun Wang, and Jing Xiao.
\newblock Understanding gradient clipping in incremental gradient methods.
\newblock In \emph{International Conference on Artificial Intelligence and
  Statistics}, 2021.

\bibitem[Reisizadeh et~al.(2023)Reisizadeh, Li, Das, and
  Jadbabaie]{Reisizadeh2023VariancereducedCF}
Amirhossein Reisizadeh, Haochuan Li, Subhro Das, and Ali Jadbabaie.
\newblock Variance-reduced clipping for non-convex optimization.
\newblock \emph{ArXiv}, abs/2303.00883, 2023.

\bibitem[Wang et~al.(2022)Wang, Zhang, Zhang, Meng, Ma, Liu, and
  Chen]{wang2022provable}
Bohan Wang, Yushun Zhang, Huishuai Zhang, Qi~Meng, Zhi-Ming Ma, Tie-Yan Liu,
  and Wei Chen.
\newblock Provable adaptivity in adam.
\newblock \emph{arXiv preprint arXiv:2208.09900}, 2022.

\bibitem[Zhang et~al.(2020)Zhang, Jin, Fang, and Wang]{Zhang2020ImprovedAO}
Bohang Zhang, Jikai Jin, Cong Fang, and Liwei Wang.
\newblock Improved analysis of clipping algorithms for non-convex optimization.
\newblock \emph{ArXiv}, abs/2010.02519, 2020.

\bibitem[Zhang et~al.(2019)Zhang, He, Sra, and Jadbabaie]{zhang2019gradient}
Jingzhao Zhang, Tianxing He, Suvrit Sra, and Ali Jadbabaie.
\newblock Why gradient clipping accelerates training: A theoretical
  justification for adaptivity.
\newblock \emph{arXiv preprint arXiv:1905.11881}, 2019.

\bibitem[Zhao et~al.(2021)Zhao, Xie, and Li]{Zhao2021OnTC}
Shen-Yi Zhao, Yin-Peng Xie, and Wu-Jun Li.
\newblock On the convergence and improvement of stochastic normalized gradient
  descent.
\newblock \emph{Science China Information Sciences}, 64, 2021.

\end{thebibliography}
\bibliographystyle{plainnat}

\newpage
\appendix
\onecolumn

\section{Proofs related to generalized smoothness}
\label{app:smoothness}

In this section, we provide the proofs of propositions and lemmas related to the generalized smoothness condition in Definition~\ref{def:smooth_v1}~or~\ref{def:smooth_v2}. First, in Appendix~\ref{subapp:example}, we justify the examples we discussed in Section~\ref{sec:pre}. Next, we provide the detailed proof of Proposition~\ref{prop:equ_ass} in Appendix~\ref{subapp:equal_def}. Finally, we provide the proofs of the useful properties of generalized smoothness in Appendix~\ref{subapp:lemmas_related_smoothness}, including Lemma~\ref{lem:lsl}, Lemma~\ref{lem:reverse-PL-main}, and Corollary~\ref{cor:bd_g_by_f} stated in Section~\ref{properties}.

\subsection{Justification of examples in Section~\ref{sec:pre}}
\label{subapp:example}

In this section, we justify the univariate examples of $(\rho,L_0,L_\rho)$-smooth functions listed in Table~\ref{tab:l0lrho_exp} and also provide the proof of Propositions~\ref{prop:self-concordant}. 

First, it is well-known that all quadratic functions have bounded Hessian and are Lipschitz smooth, corresponding to $\rho=0$. Next, \citep[Lemma~2]{zhang2019gradient} shows that any univariate polynomial is $(L_0,L_1)$-smooth, corresponding to $\rho=1$. Then, regarding the exponential function $f(x)=a^x$ where $a>1$, we have $f'(x)=\log(a) a^x$ and $f''(x)=\log(a)^2 a^x = \log(a) f'(x)$, which implies $f$ is $(1, 0, \log(a))$-smooth. Similarly, by standard calculations, it is straight forward to verify that logarithmic functions and $x^p$, $p\neq 1$ are also $(\rho,L_0,L_\rho)$-smooth with $\rho=2$ and $\rho=\frac{p-2}{p-1}$ respectively. So far we have justified all the examples in Table~\ref{tab:l0lrho_exp} except double exponential functions $a^{(b^x)}$ and rational functions, which will be justified rigorously by the two propositions below.

First, for double exponential functions in the form of $f(x)=a^{(b^x)}$ where $a,b>1$, we have the following proposition, which shows $f$ is $(\rho,L_0,L_\rho)$-smooth for any $\rho>1$.

\begin{proposition}
    \label{prop:exp_double_exp}
    For any $\rho>1$, the double exponential function $f(x)=a^{(b^x)}$, where $a,b>1$, is $(\rho, L_0,L_\rho)$-smooth for some $L_0,L_\rho\ge 0$. However, it is not necessarily $(L_0,L_1)$-smooth for any $L_0,L_1\ge 0$.
\end{proposition}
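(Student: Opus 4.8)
The plan is to compute the derivatives of $f(x) = a^{(b^x)}$ explicitly, express $f''(x)$ as a function of $f'(x)$, and then verify that this function is dominated by $L_0 + L_\rho u^\rho$ for any $\rho > 1$. Writing $\alpha = \log a > 0$ and $\beta = \log b > 0$, we have $f(x) = \exp(\alpha b^x) = \exp(\alpha e^{\beta x})$, so $f'(x) = \alpha\beta e^{\beta x} f(x)$ and, differentiating once more, $f''(x) = \alpha\beta^2 e^{\beta x} f(x) + (\alpha\beta e^{\beta x})^2 f(x) = \beta f'(x) + \frac{(f'(x))^2}{f(x)}$. Since $f(x) = \exp(\alpha e^{\beta x}) \ge 1$ and grows faster than any power of $e^{\beta x}$, while $f'(x) = \alpha\beta e^{\beta x} f(x)$, we get that $f'(x)/f(x) = \alpha\beta e^{\beta x}$ is only \emph{logarithmic} in $f(x)$ (of order $\log f(x)$ up to constants), hence $\frac{(f'(x))^2}{f(x)} = f'(x)\cdot\frac{f'(x)}{f(x)} = O\!\left(f'(x)\log f'(x)\right)$ as $x\to\infty$. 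Consequently $f''(x) \le \beta f'(x) + C f'(x)\log(2+f'(x))$ for some constant $C$, and since $\log(2+u) = o(u^{\rho-1})$ for any $\rho > 1$, we can pick $L_0, L_\rho$ so that $f''(x) \le L_0 + L_\rho (f'(x))^\rho$ for all $x\in\R$; this establishes $(\rho, L_0, L_\rho)$-smoothness via Definition~\ref{def:l0lrho} (noting the Hessian of a univariate function is just $f''$).

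For the negative part --- that $f$ need not be $(L_0,L_1)$-smooth --- the idea is to show $f''(x)/f'(x)$ is \emph{unbounded} as a function that would need to be controlled by $L_0/f'(x) + L_1$. From the identity above, $\frac{f''(x)}{f'(x)} = \beta + \frac{f'(x)}{f(x)} = \beta + \alpha\beta e^{\beta x} \to \infty$ as $x\to\infty$. If $f$ were $(L_0,L_1)$-smooth we would need $f''(x) \le L_0 + L_1 f'(x)$, i.e. $\frac{f''(x)}{f'(x)} \le \frac{L_0}{f'(x)} + L_1$; since $f'(x)\to\infty$ the right side converges to $L_1$, contradicting $\frac{f''(x)}{f'(x)}\to\infty$. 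So no finite $L_0, L_1$ work.

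The routine obstacle is bookkeeping the growth comparison cleanly: I want a clean constant-explicit bound $\frac{(f'(x))^2}{f(x)} \le L_0' + L_\rho' (f'(x))^\rho$ valid for \emph{all} real $x$ (not just large $x$), including the regime $x\to-\infty$ where $f'(x)\to 0$ and $f(x)\to 1$, so that $\frac{(f'(x))^2}{f(x)}\to 0$ and the bound is trivial there. The cleanest route is probably to substitute $t = e^{\beta x}\in(0,\infty)$, write everything in terms of $t$, so $f' = \alpha\beta t e^{\alpha t}$ and $\frac{(f')^2}{f} = \alpha^2\beta^2 t^2 e^{\alpha t}$, and then show $\alpha^2\beta^2 t^2 e^{\alpha t} \le L_0 + L_\rho(\alpha\beta t e^{\alpha t})^\rho$ for all $t > 0$. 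For $t$ bounded away from $0$ and $\infty$ this is a compactness argument; for $t\to 0$ the left side $\to 0$; for $t\to\infty$ one uses that $t^2 e^{\alpha t}$ versus $(t e^{\alpha t})^\rho = t^\rho e^{\rho\alpha t}$ — the exponential factor $e^{(\rho-1)\alpha t}$ on the right dominates any polynomial discrepancy. I expect the main obstacle (such as it is for a Proposition in the appendix) to be writing this last comparison in a way that is uniform over the parameters $a, b > 1$; fixing $a$ and $b$ first and letting the constants $L_0, L_\rho$ depend on them makes it entirely routine.
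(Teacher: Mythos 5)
Your proposal is correct and follows essentially the same route as the paper's proof: compute $f'$ and $f''$ explicitly (your identity $f''=\beta f'+(f')^2/f$ is exactly the paper's $f''=\log(b)(\log(a)b^x+1)f'$), then show via the substitution $t=e^{\beta x}$ (the paper uses $y=b^x$) that the $\rho$-th power of the gradient dominates asymptotically, handle $x\to-\infty$ where $f''\to0$, and cover the compact middle range by continuity. Your negative part is the same computation in spirit but slightly stronger, since the unboundedness of $f''/f'$ rules out $(L_0,L_1)$-smoothness for every $a,b>1$, whereas the paper only exhibits the single instance $e^{(e^x)}$.
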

\begin{proof}[Proof of Proposition~\ref{prop:exp_double_exp}]
   By standard calculations, we can obtain
\begin{align}
\label{eq:exp_double_exp_eq1}
    f'(x)=\log(a)\log (b)\, b^x a^{(b^x)},\quad f''(x)=\log (b)(\log(a)b^x+1)\cdot f'(x).
\end{align}
Note that if $\rho>1$,
\begin{align*}
    \lim_{x\to+\infty} \frac{\abs{f'(x)}^\rho}{\abs{f''(x)}}=\lim_{x\to+\infty}\frac{\abs{f'(x)}^{\rho-1}}{\log (b)(\log(a)b^x+1)}=\lim_{y\to+\infty} \frac{\left(\log(a)\log (b) y\right)^{\rho-1} a^{(\rho-1)y}  }{\log (b)(\log(a)y+1)}=\infty,
\end{align*}
where the first equality is a direct calculation based on \eqref{eq:exp_double_exp_eq1}; the second equality uses change of variable $y=b^x$; and the last equality is because exponential functions grow faster than affine functions. Therefore, for any $L_\rho>0$, there exists $x_0\in\R$ such that $\abs{f''(x)}\le L_\rho \abs{f'(x)}^\rho$ if $x>x_0$. Next, note that $\lim_{x\to-\infty}f''(x)=0$. Then for any $\lambda_1>0$, there exists $x_1\in\R$ such that $\abs{f''(x)}\le \lambda_1$ if $x<x_1$. Also, since $f''$ is continuous, by Weierstrass's Theorem, we have $\abs{f''(x)}\le \lambda_2$ if $x_1\le x\le x_0$ for some $\lambda_2>0$. Then denoting $L_0=\max\{\lambda_1,\lambda_2\}$, we know $f$ is $(\rho,L_0,L_\rho)$-smooth.

Next, to show $f$ is not necessarily $(L_0,L_1)$-smooth, consider the specific double exponential function  $f(x)=e^{(e^x)}$. Then we have
\begin{align*}
    f'(x)=e^x e^{(e^x)},\quad f''(x)=(e^x+1)\cdot f'(x).
\end{align*}
For any $x\ge\max\left\{\log(L_0+1),\log(L_1+1)\right\}$, we can show that
\begin{align*}
    \abs{f''(x)}> (L_1+1) f'(x)>L_0+L_1\abs{f'(x)},
\end{align*}
which shows $f$ is not $(L_0,L_1)$ smooth for any $L_0,L_1\ge 0$.
\end{proof}

In the next proposition, we show that any univariate rational function $f(x)=P(x)/Q(x)$, where $P$ and $Q$ are two polynomials, is $(\rho,L_0,L_{\rho})$-smooth with $\rho=1.5$.

\begin{proposition}
    \label{prop:exp_rational}
    The rational function $f(x)=P(x)/Q(x)$, where $P$ and $Q$ are two polynomials, is $(1.5, L_0,L_{1.5})$-smooth for some $L_0,L_{1.5}\ge 0$. However, it is not necessarily $(\rho,L_0,L_\rho)$-smooth for any $\rho<1.5$ and $L_0,L_\rho\ge 0$.
\end{proposition}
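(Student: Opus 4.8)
I would prove the two assertions of Proposition~\ref{prop:exp_rational} separately; both reduce to elementary asymptotics of the rational functions $f'=(P/Q)'$ and $f''=(P/Q)''$ (which are again rational) near the real poles of $f$ and near $\pm\infty$. For the positive direction, first reduce to the case $\gcd(P,Q)=1$: cancelling common factors only enlarges the natural domain, and the $\ell$-smoothness inequality is required only on $\cX$, so it suffices to establish $\abs{f''(x)}\le L_0+L_{1.5}\abs{f'(x)}^{1.5}$ on the maximal domain $\cX_{\max}=\R\setminus Z$, where $Z=\Set{x_1,\dots,x_m}$ is the finite set of real roots of $Q$. On $\cX_{\max}$ the function $f$ is $C^\infty$, so $g(x):=\abs{f''(x)}/(1+\abs{f'(x)}^{3/2})$ is continuous there. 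The crux is the elementary fact that a continuous function on a finite disjoint union of open intervals which has a finite $\limsup$ as $x$ approaches each endpoint (each point of $Z$ and $\pm\infty$) is bounded on the whole union; if $g\le M$ everywhere, then $\abs{f''}\le M+M\abs{f'}^{3/2}$, i.e. $f$ is $(1.5,M,M)$-smooth. So it remains to verify the endpoint limits.

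Near a root $x_0\in Z$ of multiplicity $k\ge 1$, write $Q(x)=(x-x_0)^k\tilde Q(x)$ with $\tilde Q(x_0)\neq 0$, so $f(x)=(x-x_0)^{-k}u(x)$ with $u=P/\tilde Q$ analytic near $x_0$ and $u(x_0)=c\neq 0$ (coprimality is used here). Differentiating twice and keeping only the dominant terms gives $f'(x)\sim -kc\,(x-x_0)^{-(k+1)}$ and $f''(x)\sim k(k+1)c\,(x-x_0)^{-(k+2)}$, whence $g(x)\sim \frac{k+1}{\sqrt{k\abs{c}}}\,\abs{x-x_0}^{(k-1)/2}$, which has a finite limit precisely because $(k-1)/2\ge 0$. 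Near $\pm\infty$, $f'$ and $f''$ are rational and hence each behaves like a constant times an integer power of $x$ (or vanishes identically); if $f'(x)\sim\gamma_1 x^{d_1}$ with $d_1\ge 1$, then $f''(x)\sim \gamma_1 d_1 x^{d_1-1}$ and $g(x)\sim\mathrm{const}\cdot\abs{x}^{-d_1/2-1}\to 0$, while if $d_1\le 0$ then $f'$ stays bounded (so $1+\abs{f'}^{3/2}$ is bounded away from $0$) and $f''\to 0$, so $g\to 0$ again. This covers all endpoints and completes the positive direction. Note that the exponent $1.5$ enters only through the inequalities $\frac{3}{2}(k+1)\ge k+2$ at the poles and $\frac{3}{2}d_1\ge d_1-1$ at infinity — both tight enough that a smaller exponent fails, which is exactly what the second assertion records.

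For the negative direction it suffices to exhibit one rational function that is $(\rho,L_0,L_\rho)$-smooth for no $\rho<1.5$ and no $L_0,L_\rho\ge 0$. I would take $f(x)=1/x$ on the open domain $(0,+\infty)$ (which is bounded below and closed there, so it also satisfies Assumptions~\ref{ass:standard} and~\ref{ass:bounded_from_below}). Here $f'(x)=-x^{-2}$ and $f''(x)=2x^{-3}$, so $(\rho,L_0,L_\rho)$-smoothness would require $2x^{-3}\le L_0+L_\rho x^{-2\rho}$ for all $x>0$; multiplying by $x^3$ yields $2\le L_0 x^3+L_\rho x^{3-2\rho}$, and letting $x\to 0^+$ the right-hand side tends to $0$ whenever $\rho<3/2$ (since then $3-2\rho>0$), a contradiction.

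\textbf{Main obstacle.} There is no deep difficulty here; the part requiring the most care is the pole analysis — justifying which term dominates after differentiating $(x-x_0)^{-k}u(x)$, and that the leading coefficient $c=u(x_0)$ is nonzero, which is why the reduction to coprime $P,Q$ is made first — together with the bookkeeping needed to treat all endpoint types (finitely many real poles and the two ends at infinity) uniformly via the boundedness of $g$.
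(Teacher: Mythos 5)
Your proof is correct, and at the top level it follows the same strategy as the paper: control $\abs{f''}/\abs{f'}^{1.5}$ asymptotically at the (finitely many) real poles and at $\pm\infty$, absorb the remaining compact region by continuity, and use $f(x)=1/x$ near $0$ for the negative direction, exactly as the paper does. The one genuine difference is in how the pole asymptotics are obtained: the paper runs a partial fraction decomposition and asserts that the derivatives of the leading term $A_{ij_i}/(x-a_i)^{j_i}$ dominate those of all other terms near $a_i$, whereas you factor $Q(x)=(x-x_0)^k\tilde Q(x)$ locally (after reducing to $\gcd(P,Q)=1$) and read off the leading behavior $f'\sim -kc\,(x-x_0)^{-(k+1)}$, $f''\sim k(k+1)c\,(x-x_0)^{-(k+2)}$ directly; your version makes the dominance and the nonvanishing of the leading coefficient explicit where the paper leaves it as an easy verification, and your packaging via the bounded ratio $g=\abs{f''}/(1+\abs{f'}^{3/2})$ replaces the paper's explicit choice of neighborhoods $\delta_i$, $x_0$ and the compact set $\cB$; the two buy essentially the same thing, with yours slightly cleaner in justifying the local expansion and the paper's slightly more self-contained in that it never needs the coprimality reduction to be spelled out (it just assumes $A_{ij_i}\neq 0$ without loss of generality).
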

\begin{proof}[Proof of Proposition~\ref{prop:exp_rational}]
    Let $f(x)=P(x)/Q(x)$ where $P$ and $Q$ are two polynomials. Then the partial fractional decomposition of $f(x)$ is given by
 \begin{align*}
     f(x)= w(x)+\sum_{i=1}^m \sum_{r=1}^{j_i} \frac{A_{ir}}{(x-a_i)^r}+\sum_{i=1}^n \sum_{r=1}^{k_i}\frac{B_{ir}x+C_{ir}}{(x^2+b_i x+c_i)^r},
 \end{align*}
 where $w(x)$ is a polynomial, $A_{ir},B_{ir},C_{ir},a_i,b_i,c_i$ are all real constants satisfying $b_i^2-4c_i<0$ for each $1\le i\le n$ which implies $x^2+b_i x+c_i>0$ for all $x\in\R$. Assume $j_i\ge1$ and $A_{ij_i}\neq 0$ without loss of generality. Then we know $f$ has only finite singular points $\{a_i\}_{1\le i\le m}$ and has continuous first and second order derivatives at all other points. 
 To simplify notation,  denote
 \begin{align*}
     p_{ir}(x):=\frac{A_{ir}}{(x-a_i)^r},\quad q_{ir}(x):=\frac{B_{ir}x+C_{ir}}{(x^2+b_i x+c_i)^r}.
 \end{align*}
Then we have $f(x)=w(x)+\sum_{i=1}^m \sum_{r=1}^{j_i} p_{ir}(x)+\sum_{i=1}^n \sum_{r=1}^{k_i}q_{ir}(x)$. We know that $\frac{r+2}{r+1}\le 1.5$ for any $r\ge 1$. Then we can show that
\begin{align}
\label{eq:ai_infty}
    \lim_{x\to a_i} \frac{\abs{f'(x)}^{1.5}}{\abs{f''(x)}}=\lim_{x\to a_i} \frac{\abs{p'_{ij_i}(x)}^{1.5}}{\abs{p''_{ij_i}(x)}}\ge \frac{1}{j_i+1},
\end{align}
where the first equality is because one can easily verify that the first and second order derivatives of $p_{ij_i}$ dominate those of all other terms when $x$ goes to $a_i$, and the second equality is by standard calculations noting that $\frac{j_i+2}{j_i+1}\le 1.5$. 
Note that \eqref{eq:ai_infty} implies that, for any $L_\rho>j_i+1$, there exists $\delta_i>0$ such that 
\begin{align}
 \label{eq:rational1}
    \abs{f''(x)}\le L_\rho \abs{f'(x)}^{1.5},\quad\text{if}\; \abs{x-a_i}<\delta_i.
\end{align}
Similarly, one can show  
     $\lim_{x\to\infty} \frac{\abs{f'(x)}^{1.5}}{\abs{f''(x)}}=\infty$,
 which implies there exists $x_0>0$ such that
 \begin{align}
  \label{eq:rational2}
    \abs{f''(x)}\le L_\rho \abs{f'(x)}^{1.5},\quad\text{if}\; \abs{x}>x_0.
\end{align}
 Define
 \begin{align*}
     \cB:=\left\{x\in\R\mid \abs{x}\le x_0 \text{ and }\abs{x-a_i}\ge\delta_i,\forall i\right\}.
 \end{align*}
 We know $\cB$ is a compact set and therefore the continuous function $f''$ is bounded within $\cB$, i.e., there exists some constant $L_0>0$ such that
 \begin{align}
 \label{eq:rational3}
     \abs{f''(x)}\le L_0 ,\quad\text{if}\; x\in\cB.
 \end{align}
 Combining \eqref{eq:rational1}, \eqref{eq:rational2}, and \eqref{eq:rational3}, we have shown 
\begin{align*}
    \abs{f''(x)}\le L_0+ L_\rho \abs{f'(x)}^{1.5}, \quad\forall x\in\dom(f),
\end{align*}
which completes the proof of the first part.

For the second part, consider the ration function $f(x)=1/x$. Then we know that $f'(x)=-1/x^2$ and $f''(x)=2/x^3$. Note that for any $\rho<1.5$ and $0<x\le\min\{(L_0+1)^{-1/3},(L_\rho+1)^{-1/(3-2\rho)}\}$, we have
\begin{align*}
    \abs{f''(x)}=\frac{1}{x^3}+\frac{1}{x^{3-2\rho}}\cdot\abs{f'(x)}^\rho > L_0+L_\rho \abs{f'(x)}^\rho,
\end{align*}
which shows $f$ is not $(\rho,L_0,L_\rho)$ smooth for any $\rho<1.5$ and $L_0,L_\rho\ge 0$.
\end{proof}

Finally, we complete this section with the proof of Proposition~\ref{prop:self-concordant}, which shows self-concordant functions are $(2,L_0,L_2)$-smooth for some $L_0,L_\rho\ge 0$.

\begin{proof}[Proof of Proposition~\ref{prop:self-concordant}]
    Let $h:\R\to\R$ be a self-concordant function. We have $h'''(x) \le 2 h''(x)^{3/2}$. Then, for $x\in(a,b)$, we can obtain
    \begin{align*}
        \frac{1}{2}h''(x)^{-1/2} h'''(x) \le h''(x).
    \end{align*}
    Integrating both sides from $x_0$ to $y$ for $x_0, y \in (a,b)$, we have 
    \begin{align*}
        h''(y)^{1/2} - h''(x_0)^{1/2} \le h'(y) - h'(x_0).
    \end{align*}
    Therefore, 
    \begin{align*}
        h''(y) \le (h''(x_0)^{1/2} - h'(x_0) + h'(y))^2
        \le 2 (h''(x_0)^{1/2} - h'(x_0))^2 + 2 h'(y)^2.
    \end{align*}
    Since $h''(y)>0$, we have $\abs{h''(y)}=h''(y)$. Therefore,
    the above inequality shows that $h$ is $(2, L_0, L_2)$-smooth with $L_0 = 2 (h''(x_0)^{1/2} - h'(x_0))^2$ and $L_2 = 2$.
\end{proof}

\subsection{Proof of Proposition~\ref{prop:equ_ass}}
\label{subapp:equal_def}
In order to prove Proposition~\ref{prop:equ_ass}, we need the following several lemmas. First, the lemma below partially generalizes Gr\"{o}nwall's inequality.
\begin{lemma}\label{lem:gen_gronwall}
	Let $\alpha:[a,b]\to[0,\infty)$ and $\beta:[0,\infty)\to (0,\infty)$ be two continuous functions. Suppose $\alpha'(t)\le \beta(\alpha(t))$ almost everywhere over $(a,b)$. Denote function $\phi(u):=\int \frac{1}{\beta(u)}\,du$. We have for all $t\in[a,b]$,
	\begin{align*}
		\phi(\alpha(t))\le \phi(\alpha(a))-a+t.
	\end{align*}
\end{lemma}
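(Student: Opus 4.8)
The plan is to reduce the statement to the classical Gr\"onwall-type estimate by using $\phi$ as a change of variables. First I would observe that since $\beta$ is continuous and strictly positive on $[0,\infty)$, the function $1/\beta$ is continuous, so $\phi(u):=\int_0^u \frac{1}{\beta(s)}\,ds$ is a well-defined $C^1$ function with $\phi'(u)=1/\beta(u)>0$; in particular $\phi$ is strictly increasing. (The indefinite-integral notation in the statement just fixes $\phi$ up to an additive constant, which does not affect the claimed inequality since the constant cancels between the two sides.)

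Next I would consider the composition $g(t):=\phi(\alpha(t))$ on $[a,b]$. Since $\alpha$ is continuous and, by hypothesis, differentiable almost everywhere with $\alpha'(t)\le\beta(\alpha(t))$ a.e., and since $\phi$ is $C^1$, the chain rule gives that $g$ is differentiable almost everywhere with
\begin{align*}
g'(t)=\phi'(\alpha(t))\,\alpha'(t)=\frac{\alpha'(t)}{\beta(\alpha(t))}\le \frac{\beta(\alpha(t))}{\beta(\alpha(t))}=1
\end{align*}
at every point where $\alpha'(t)$ exists and the differential inequality holds, i.e. almost everywhere on $(a,b)$. Then integrating the bound $g'(t)\le 1$ from $a$ to $t$ yields $g(t)-g(a)\le t-a$, which is exactly $\phi(\alpha(t))\le\phi(\alpha(a))-a+t$.

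The one technical subtlety — and the main obstacle — is justifying the integration step $g(t)-g(a)=\int_a^t g'(s)\,ds$, since $g$ is only differentiable almost everywhere, not everywhere; a function can be a.e. differentiable with bounded derivative yet fail the fundamental theorem of calculus (e.g. the Cantor function). The clean fix is to note that $\alpha$ is assumed continuous and the hypothesis ``$\alpha'(t)\le\beta(\alpha(t))$ almost everywhere'' should be read in the sense that $\alpha$ is (locally) absolutely continuous — which is the standard setting for such generalized Gr\"onwall statements and is how the lemma is invoked later (the relevant $\alpha$ will be built from a Lipschitz/absolutely continuous object). Under absolute continuity of $\alpha$, the composition $g=\phi\circ\alpha$ is again absolutely continuous (Lipschitz $\phi$ composed with absolutely continuous $\alpha$ on a compact interval, using that $\alpha([a,b])$ is compact so $\phi$ is Lipschitz there), hence $g(t)-g(a)=\int_a^t g'(s)\,ds$ holds, and the a.e. bound $g'\le 1$ integrates to the desired inequality. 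I would state this absolute-continuity reading explicitly at the start of the proof and then the rest is the routine chain-rule-and-integrate computation sketched above.
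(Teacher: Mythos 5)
Your proof is correct, and it is a more direct route than the paper's. The paper proves the lemma via a comparison argument: it introduces the solution $\gamma$ of the ODE $\gamma'(t)=\beta(\gamma(t))$ with $\gamma(a)=\alpha(a)$, integrates $\diff\phi(\gamma(t))=\diff t$ to get $\phi(\gamma(t))=\phi(\alpha(a))+t-a$, and then shows $\phi(\alpha(t))\le\phi(\gamma(t))$ by observing that $(\phi(\alpha(t))-\phi(\gamma(t)))'\le 0$ almost everywhere. You skip the auxiliary ODE entirely and integrate the bound $\frac{\diff}{\diff t}\phi(\alpha(t))=\alpha'(t)/\beta(\alpha(t))\le 1$ directly, which is the same key computation but packaged more economically; as a bonus, your version sidesteps the question of whether $\gamma$ exists on all of $[a,b]$ (Peano only gives local existence, and solutions of $\gamma'=\beta(\gamma)$ can blow up before $b$ when $\beta$ grows fast — a point the paper does not address). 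Your flagged subtlety about integrating an a.e.\ derivative bound is real, but it is shared by the paper's argument: concluding monotonicity of $\phi(\alpha(t))-\phi(\gamma(t))$ from a nonpositive a.e.\ derivative requires exactly the same absolute-continuity (or local Lipschitz) reading of the hypothesis that you make explicit, and in the downstream application ($\alpha(t)=\norm{\nabla f(z(t))}$ with $\nabla f$ locally Lipschitz along the segment) that regularity indeed holds. So your proof is valid and, if anything, slightly tighter than the paper's; the paper's comparison-principle formulation is the more classical presentation but buys nothing extra here.
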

\begin{proof}[Proof of Lemma~\ref{lem:gen_gronwall}]
	First, by definition, we know that $\phi$ is increasing since $\phi'=\frac{1}{\beta}>0$. Let function $\gamma:[a,b]\to\R$ be the solution of the following differential equation
	\begin{align}
		\gamma'(t)=\beta(\gamma(t))\;\;\forall t\in(a,b),\quad \gamma(a)=\alpha(a).\label{eq:ode}
	\end{align}
 Then we have
 \begin{align*}
  d\phi(\gamma(t))  = \frac{d \gamma(t)}{\beta(\gamma(t))}=dt.
 \end{align*}
	Integrating both sides, noting that $\gamma(a)=\alpha(a)$ by \eqref{eq:ode}, we obtain
	\begin{align*}
		\phi(\gamma(t)) - \phi(\alpha(a)) = t-a.
	\end{align*}
	Then it suffices to show $\phi(\alpha(t))\le \phi(\gamma(t)), \;\forall t\in[a,b]$. Note that the following inequality holds almost everywhere.
	\begin{align*}
		(\phi(\alpha(t))-\phi(\gamma(t)))'= \phi'(\alpha(t))\alpha '(t)-\phi'(\gamma(t))\gamma'(t)= \frac{\alpha'(t)}{\beta(\alpha(t))} - \frac{\gamma'(t)}{\beta(\gamma(t))}\le 0,
	\end{align*}
 where the inequality is because $\alpha'(t)\le\beta(\alpha(t))$ by the assumption of this lemma and $\gamma'(t)=\beta(\gamma(t))$ by \eqref{eq:ode}.
	Since $\phi(\alpha(a))-\phi(\gamma(a))=0$, we know for all $t\in[a,b]$, $\phi(\alpha(t))\le \phi(\gamma(t))$, which completes the proof.
\end{proof}

With Lemma~\ref{lem:gen_gronwall}, one can bound the gradient norm within a small enough neighborhood of a given point as in the following lemma.
\begin{lemma}
	\label{lem:local_smooth_lem} If the objective function $f$ is $\ell$-smooth, for any two points $x,y\in\R^d$ such that the closed line segment between $x$ and $y$ is contained in $\cX$, if $\norm{y-x}\le \frac{a}{\ell (\norm{\nabla f(x)}+a) }$ for any $a>0$, we have $$\norm{\nabla f(y)}\le \norm{\nabla f(x)}+a.$$
\end{lemma}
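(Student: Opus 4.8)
The plan is to parametrize the line segment and apply the generalized Grönwall inequality from Lemma~\ref{lem:gen_gronwall} to the gradient norm along it. Define $x(s) := x + s(y-x)$ for $s\in[0,1]$, which stays in $\cX$ by hypothesis, and set $\alpha(s) := \norm{\nabla f(x(s))}$. The goal is to control how fast $\alpha$ can grow: intuitively, $\alpha'(s) \le \norm{\nabla^2 f(x(s))}\cdot\norm{y-x} \le \ell(\alpha(s))\cdot\norm{y-x}$, so $\alpha$ satisfies a differential inequality of exactly the form handled by Lemma~\ref{lem:gen_gronwall} with $\beta(u) = \norm{y-x}\cdot\ell(u)$.

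First I would establish the differential inequality rigorously. Since $f$ is only assumed differentiable with $\norm{\nabla^2 f}\le \ell(\norm{\nabla f})$ \emph{almost everywhere}, some care is needed: $\nabla f$ composed with the line is Lipschitz on compact subsets where the gradient stays bounded, hence $\alpha$ is absolutely continuous and differentiable almost everywhere, with $\abs{\alpha'(s)} \le \norm{\frac{d}{ds}\nabla f(x(s))}$ wherever both exist. The chain rule gives $\frac{d}{ds}\nabla f(x(s)) = \nabla^2 f(x(s))(y-x)$ at points of differentiability, so $\alpha'(s) \le \ell(\alpha(s))\norm{y-x}$ a.e. Then I would apply Lemma~\ref{lem:gen_gronwall} with $a\to 0$, $b\to 1$, $\beta(u) = \norm{y-x}\,\ell(u)$, and $\phi(u) = \int \frac{1}{\norm{y-x}\,\ell(u)}\,du = \frac{1}{\norm{y-x}}\int\frac{du}{\ell(u)}$, yielding $\phi(\alpha(1)) \le \phi(\alpha(0)) + 1$, i.e.
\begin{align*}
    \int_{\alpha(0)}^{\alpha(1)} \frac{du}{\ell(u)} \le \norm{y-x}.
\end{align*}

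Next I would convert this integral bound into the stated conclusion. Suppose for contradiction that $\alpha(1) = \norm{\nabla f(y)} > \norm{\nabla f(x)} + a = \alpha(0) + a$. Since $\ell$ is non-decreasing, on the interval $[\alpha(0), \alpha(0)+a]$ we have $\ell(u) \le \ell(\alpha(0)+a) = \ell(\norm{\nabla f(x)} + a)$, hence
\begin{align*}
    \norm{y-x} \ge \int_{\alpha(0)}^{\alpha(1)}\frac{du}{\ell(u)} \ge \int_{\alpha(0)}^{\alpha(0)+a}\frac{du}{\ell(u)} \ge \frac{a}{\ell(\norm{\nabla f(x)}+a)},
\end{align*}
which, if the inequality is strict, contradicts the hypothesis $\norm{y-x}\le \frac{a}{\ell(\norm{\nabla f(x)}+a)}$; a limiting/continuity argument handles the boundary case. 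I expect the main obstacle to be the measure-theoretic bookkeeping in the first step: justifying that $\alpha$ is absolutely continuous and that the almost-everywhere Hessian bound transfers to an almost-everywhere bound on $\alpha'$ along the one-dimensional segment (the "a.e." set in $\R^d$ need not intersect the segment nicely). This is presumably dealt with by a standard approximation or by invoking that the segment can be perturbed, or simply by the argument that $\nabla f$ is locally Lipschitz wherever $\ell(\norm{\nabla f})$ is locally bounded — I would lean on whichever such regularity fact the paper has available, and otherwise treat the segment as a limit of segments along which the bound holds.
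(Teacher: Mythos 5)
Your proposal is correct and follows essentially the same route as the paper: parametrize the segment, derive the differential inequality $\alpha'(t)\le \ell(\alpha(t))\norm{y-x}$ a.e., invoke Lemma~\ref{lem:gen_gronwall} to get $\int_{\alpha(0)}^{\alpha(1)}\frac{du}{\ell(u)}\le\norm{y-x}$, and use monotonicity of $\ell$ to conclude. The only difference is cosmetic: the paper compares $\psi(\norm{\nabla f(y)})$ with $\psi(\norm{\nabla f(x)}+a)$ directly via monotonicity of $\psi(u)=\int_0^u\frac{dv}{\ell(v)}$, whereas you argue by contradiction (and your worry about the boundary case is unneeded, since $\ell>0$ makes the extra integral piece strictly positive).
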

\begin{proof}[Proof of Lemma~\ref{lem:local_smooth_lem}]
	Denote $z(t):=(1-t)x+ty$ for $0\le t\le 1$. Then we know $z(t)\in\cX$ for all $0\le t\le 1$ by the assumption made in this lemma. Then we can also define $\alpha(t):=\norm{\nabla f(z(t))}$ for $0\le t\le 1$. 
 Note that for any $0\le t\le s\le 1$, by triangle inequality,
	\begin{align}
 \label{eq:lsl_eq1}
		\alpha(s) - \alpha(t) \le &
		\norm{\nabla f(z(s))-\nabla f(z(t))}.
	\end{align}
	We know that $\alpha(t)=\norm{\nabla f(z(t))}$ is differentiable almost everywhere since $f$ is second order differentiable almost everywhere (Here we assume $\alpha(t)\neq 0$ for $0<t<1$ without loss of generality. Otherwise, one can define $t_m=\sup\{0<t<1\mid \alpha(t)=0\}$ and consider the interval $[t_m,1]$ instead). Then the following equality holds almost everywhere
	\begin{align*}
		\alpha'(t) =& \lim_{s\downarrow t}\frac{\alpha(s) - \alpha(t)}{s-t}\le \lim_{s\downarrow t}\frac{\norm{\nabla f(z(s))-\nabla f(z(t))}}{s-t}=\norm{\lim_{s\downarrow t}\frac{{\nabla f(z(s))-\nabla f(z(t))}}{s-t}}\\ = & \norm{\nabla^2 f(z(t))(y-x)}
		\le  \norm{\nabla^2 f(z(t))}\norm{y-x}\le \ell(\alpha(t))\norm{y-x},
	\end{align*}
 where the first inequality is due to \eqref{eq:lsl_eq1} and the last inequality is by Definition~\ref{def:smooth_v1}.
	Let $\beta(u):=\ell(u)\cdot\norm{y-x}$ and $\phi(u):=\int_{0}^u \frac{1}{\beta(v)} dv$.
	By Lemma~\ref{lem:gen_gronwall}, we know that
	\begin{align*}
		\phi\left(\norm{\nabla f(y)}\right)=\phi(u(1))\le \phi(u(0))+1 = \phi\left(\norm{\nabla f(x)}\right)+1.
	\end{align*}
	Denote $\psi(u):=\int_{0}^u \frac{1}{\ell(v)} dv=\phi(u)\cdot\norm{y-x}$. We have
	\begin{align*}
		\psi\left(\norm{\nabla f(y)}\right)\le & \psi\left(\norm{\nabla f(x)}\right) + \norm{y-x}\\
		\le & \psi\left(\norm{\nabla f(x)}\right)+ \frac{a}{\ell(\norm{\nabla f(x)}+a) }\\
		\le & \int_{0}^{\norm{\nabla f(x)}} \frac{1}{\ell(v)} \,dv+\int_{\norm{\nabla f(x)}}^{\norm{\nabla f(x)}+a} \frac{1}{\ell(v)}\, dv\\
		=&\psi(\norm{\nabla f(x)}+a).
	\end{align*}
	Since $\psi$ is increasing, we have $\norm{\nabla f(y)}\le \norm{\nabla f(x)}+a$. 
\end{proof}
With Lemma~\ref{lem:local_smooth_lem}, we are ready to prove Proposition~\ref{prop:equ_ass}.

\begin{proof}[Proof of Proposition~\ref{prop:equ_ass}] We prove the two directions in this proposition separately.

\textbf{1. An $(r,\ell)$-smooth function is $\ell$-smooth.} 

For each fixed $x\in\cX$ where $\nabla^2 f(x)$ exists and any unit-norm vector $w$, by Definition~\ref{def:smooth_v2}, we know that for any $t\le r(\norm{\nabla f(x)})$,
\begin{align*}
    \norm{\nabla f(x+tw)-\nabla f(x)} \le t \cdot\ell(\norm{\nabla f(x)}).
\end{align*}
Then we know that
\begin{align*}
    \norm{\nabla^2 f(x) w} =&\norm{\lim_{t\downarrow 0} \frac{1}{t}(\nabla f(x+tw)-\nabla f(x))} \\=& \lim_{t\downarrow 0} \frac{1}{t}\norm{(\nabla f(x+tw)-\nabla f(x))} \le \ell(\norm{\nabla f(x)}),
\end{align*}
which implies $\norm{\nabla^2 f(x) }\le \ell(\norm{\nabla f(x)})$ for any point $x$ if $\nabla^2 f(x)$ exists. 

Then it suffices to show that $\nabla^2 f(x)$ exists almost everywhere. Note that for each $x\in\cX$, Definition~\ref{def:smooth_v2} states that the gradient function is $\ell(\norm{\nabla f(x)})$ Lipschitz within the ball $\cB(x,r(\norm{\nabla f(x)}))$. Then by Rademacher's Theorem, $f$ is twice differentiable almost everywhere within this ball. Then we can show it is also twice differentiable almost everywhere within the entire domain $\cX$ as long as we can cover $\cX$ with countably many such balls. Define $\cS_n:=\{x\in\cX\mid n\le\norm{\nabla f(x)}\le n+1\}$ for integer $n\ge 0$. We have $\cX=\cup_{n\ge 0}\cS_n$. One can easily find an internal covering of $\cS_n$ with balls of size $r(n+1)$\footnote{We can find an internal covering in the following way. We first cover $\cS_n$ with countably many hyper-cubes of length $r(n+1)/\sqrt{d}$, which is obviously doable. Then for each hyper-cube that intersects with $\cS_n$, we pick one point from the intersection. Then the ball centered at the picked point with radius $r(n+1)$ covers this hyper-cube. Therefore, the union of all such balls can cover $\cS_n$.}, i.e., there exist $\{x_{n,i}\}_{i\ge 0}$, where $x_{n,i}\in\cS_n$, such that $\cS_n\subseteq \cup_{i\ge0}\cB(x_{n,i},r(n+1))\subseteq \cup_{i\ge0}\cB(x_{n,i},r(\norm{\nabla f(x_{n,i})}))$. Therefore we have $\cX\subseteq \cup_{n,i\ge0}\cB(x_{n,i},r(\norm{\nabla f(x_{n,i})}))$ which completes the proof.

\textbf{2. An $\ell$-smooth function satisfying Assumption~\ref{ass:standard} is $\left(r,m\right)$-smooth where $m(u):=\ell(u+a)$ and $r(u):=a/m(u)$ for any $a>0$.}

For any $y\in\R^d$ satisfying $\norm{y-x}\le r(\norm{\nabla f(x)})= \frac{a}{\ell(\norm{\nabla f(x)}+a)}$,	denote $z(t):=(1-t)x+ty$ for $0\le t\le 1$. We first show $y\in\cX$ by contradiction. Suppose $y\notin \cX$, let us define $t_{\text{b}}:=\inf\{0\le t\le 1\mid z(t)\notin \cX\}$ and $z_{\text{b}}:=z(t_{\text{b}})$. Then we know $z_{\text{b}}$ is a boundary point of $\cX$. Since $f$ is a closed function with an open domain, we have
 \begin{align}
 \label{eq:inf_cont}
     \lim_{t\uparrow t_{\text{b}}} f(z(t))=\infty.
 \end{align}
 On the other hand, by the definition of $t_{\text{b}}$, we know $z(t)\in\cX$ for every $0\le t<t_{\text{b}}$. Then by Lemma~\ref{lem:local_smooth_lem}, for all $0\le t<t_{\text{b}}$, we have $\norm{\nabla f(z(t))}\le\norm{\nabla f(x)}+a$. Therefore for all $0\le t<t_{\text{b}}$,
 \begin{align*}
     f(z(t))\le& f(x)+\int_{0}^t \innerpc{\nabla f(z(s))}{y-x}\, ds\\
     \le & f(x)+ (\norm{\nabla f(x)}+a)\cdot\norm{y-x}\\
     <&\infty,
 \end{align*}
 which contradicts \eqref{eq:inf_cont}. Therefore we have shown $y\in\cX$. Since $y$ is chosen arbitrarily with the ball $\cB(x,r(\norm{\nabla f(x)}))$, we have $\cB(x,r(\norm{\nabla f(x)}))\subseteq\cX$. Then for any $x_1, x_2\in \cB(x,r(\norm{\nabla f(x)}))$, we denote $w(t):=tx_1+(1-t)x_2$. Then we know $w(t)\in\cB(x,r(\norm{\nabla f(x)}))$ for all $0\le t\le 1$ and can obtain
	\begin{align*}
		\norm{\nabla f(x_1)-\nabla f(x_2)} =& \norm{\int_0^1 \nabla^2 f(w(t))\cdot (x_1-x_2) \,dt}\\
		\le & \norm{x_1-x_2}\cdot \int_0^1 \ell(\norm{\nabla f(x)}+a)\,dt\\
		= & m(\norm{\nabla f(x)})\cdot \norm{x_1-x_2},
	\end{align*}
 where the last inequality is due to Lemma~\ref{lem:local_smooth_lem}.
\end{proof}

\subsection{Proofs of lemmas implied by generalized smoothness }
\label{subapp:lemmas_related_smoothness}

In this part, we provide the proofs of the useful properties stated in Section~\ref{properties}, including Lemma~\ref{lem:lsl}, Lemma~\ref{lem:reverse-PL-main}, and Corollary~\ref{cor:bd_g_by_f}.

\begin{proof}[Proof of Lemma~\ref{lem:lsl}]
First, note that since $\ell$ is non-decreasing and $r$ is non-increasing, we have $\ell(\norm{\nabla f(x)})\le \ell(G)=\smcst$ and $r(G)\le r(\norm{\nabla f(x)})$. Then by Definition~\ref{def:smooth_v2}, we directly have that $\cB(x,r(G))\subseteq \cB(x,r(\norm{\nabla f(x)}))\subseteq\cX$, and that for any $x_1,x_2\in\cB(x,r(G))$, we have 
\begin{align*}
    \norm{\nabla f(x_1)-\nabla f(x_2)}\le \ell(\norm{\nabla f(x)})\norm{x_1-x_2}\le \smcst \norm{x_1-x_2}.
\end{align*}

Next, for the second inequality in \eqref{eq:lsl}, define $z(t):=(1-t)x_2+tx_1$ for $0\le t\le 1$. We know $z(t)\in\cB(x,r(G))$. Note that we have shown
\begin{align}
    \label{eq:sm_eq1}
    \norm{\nabla f(z(t))-\nabla f(x_2)}\le \smcst\norm{z(t)-x_2} = t \smcst \norm{x_1-x_2}.
\end{align}Then we have
	\begin{align*}
		f(x_1)-f(x_2)=&\int_{0}^{1} \innerpc{\nabla f(z(t)}{x_1-x_2}\,dt\\
		=&\int_{0}^{1} \innerpc{\nabla f(x_2)}{x_1-x_2}+\innerpc{\nabla f(z(t))-\nabla f(x_2)}{x_1-x_2}\,dt\\
		\le&\,\innerpc{\nabla f(x_2)}{x_1-x_2} + \smcst\norm{x_1-x_2}^2\int_{0}^{1}t\,dt\\
		=&\,\innerpc{\nabla f(x_2)}{x_1-x_2}+\frac{\smcst}{2}\norm{x_1-x_2}^2,
	\end{align*}
where the inequality is due to \eqref{eq:sm_eq1}.
\end{proof}

\begin{proof}[Proof of Lemma~\ref{lem:reverse-PL-main}]
If $f$ is $\ell$-smooth, by Proposition~\ref{prop:equ_ass}, $f$ is also $(r,m)$-smooth where $m(u)=\ell(2u)$ and $r(u)=u/\ell(2u)$. Then by Lemma~\ref{lem:lsl} where we choose $G=\norm{\nabla f(x)}$, we have that $\cB\left(x,\frac{\norm{\nabla f(x)}}{\ell(2\norm{\nabla f(x)})}\right)\subseteq \cX$, and that for any $x_1,x_2\in \cB\left(x,\frac{\norm{\nabla f(x)}}{\ell(2\norm{\nabla f(x)})}\right)$, we have
\begin{align*}
    f(x_1)\le f(x_2) +\innerpc{\nabla f(x_2)}{x_1-x_2}+\frac{\ell(2\norm{\nabla f(x)})}{2}\norm{x_1-x_2}.
\end{align*}
Choosing $x_2=x$ and $x_1=x-\frac{\nabla f(x)}{\ell(2\norm{\nabla f(x)})}$, it is easy to verify that $x_1,x_2\in \cB\left(x,\frac{\norm{\nabla f(x)}}{\ell(2\norm{\nabla f(x)})} \right)$. Therefore, we have
\begin{align*}
    f^*\le f\left(x-\frac{\nabla f(x)}{\ell(2\norm{\nabla f(x)})}\right) \le f(x) -\frac{\norm{\nabla f(x)}^2}{2\ell(2\norm{\nabla f(x)})},
\end{align*}
which completes the proof.
\end{proof}

\begin{proof}[Proof of Corollary~\ref{cor:bd_g_by_f}]
    We first show $G<\infty$. Note that since $\ell$ is sub-quadratic, we know $\lim_{u\to\infty} 2\ell(2u)/u^2 = 0$. Therefore, for any $F>0$, there exists some $M>0$ such that $2\ell(2u)/u^2<1/F$ for every $u>M$. In other words, for any $u$ satisfying $u^2\le{2\ell(2u)}\cdot F$, we must have $u\le M$. Therefore, by definition of $G$, we have $G\le M<\infty$ if $F>0$. If $F=0$, we trivially get $G=0<\infty$. Also, since the set $\{u\ge0\mid u^2\le 2\ell(2u)\cdot F\}$ is closed and bounded, we know its supremum $G$ is in this set and it is also straightforward to show $G^2=2\ell(2G)\cdot F$.
    
    Next, by Lemma~\ref{lem:reverse-PL-main}, we know
    \begin{align*}
        \norm{\nabla f(x)}^2 \leq 2\ell(2\norm{\nabla f(x)})\cdot(f(x)- f^*)\le 2\ell(2\norm{\nabla f(x)})\cdot F.
    \end{align*}
    Then based on the definition of $G$, we have $\norm{\nabla f(x)}\le G$.
\end{proof}
\section{Analysis of GD for convex functions}
\label{app:convex_gd}

In this section, we provide the detailed convergence analysis of gradient descent in the convex setting, including the proofs of Lemma~\ref{lem:convex_gd_one_step} and Theorem~\ref{thm:convex-gd}, for which the following lemma will be helpful.

\begin{lemma}[Co-coercivity]
\label{lem:weird}
If $f$ is convex and $(r,\ell)$-smooth, for any $x\in\cX$ and $y\in \cB(x,r(\norm{\nabla f(x)})/2)$, we have $y\in\cX$ and
\begin{align*}
    \inner{\nabla f(x) - \nabla f(y), x-y} \geq \frac{1}{\smcst} \norm{\nabla f(x) - \nabla f(y)}^2,
\end{align*}
where $L = \ell(\norm{\nabla f(x)})$.
\end{lemma}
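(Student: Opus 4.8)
The plan is to adapt the textbook proof of gradient co-coercivity for convex smooth functions, with the only new ingredient being a check that all the auxiliary ``probe points'' used in that proof remain inside the ball on which the local smoothness from \cref{lem:lsl} is valid; this is exactly what the factor $1/2$ in the hypothesis $y\in\cB(x,r(\norm{\nabla f(x)})/2)$ buys us.

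First I would fix the shorthand $G:=\norm{\nabla f(x)}$, $L:=\ell(G)$ (consistent with the statement) and $r:=r(G)$. Applying \cref{lem:lsl} with this constant $G$ gives $\cB(x,r)\subseteq\cX$ — so in particular $y\in\cB(x,r/2)\subseteq\cX$, which settles the first claim — together with the $L$-Lipschitzness of $\nabla f$ and the descent inequality \eqref{eq:lsl} for any pair of points in $\cB(x,r)$. From Lipschitzness we get the crude bound $\norm{\nabla f(x)-\nabla f(y)}\le L\norm{x-y}\le Lr/2$, which will be used repeatedly.

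Next, introduce the two auxiliary functions $\phi(z):=f(z)-\inner{\nabla f(x), z}$ and $\psi(z):=f(z)-\inner{\nabla f(y), z}$. Both are convex on $\cX$ (a convex function minus a linear one), and since $\nabla\phi(x)=0$ and $\nabla\psi(y)=0$ while $\cX$ is open, $x$ globally minimizes $\phi$ over $\cX$ and $y$ globally minimizes $\psi$. Consider the probe points $x^{+}:=y-\tfrac1L\nabla\phi(y)=y-\tfrac1L(\nabla f(y)-\nabla f(x))$ and $y^{+}:=x-\tfrac1L\nabla\psi(x)=x-\tfrac1L(\nabla f(x)-\nabla f(y))$. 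The key verification is that both lie in $\cB(x,r)$: by the triangle inequality $\norm{x^{+}-x}\le\norm{y-x}+\tfrac1L\norm{\nabla f(y)-\nabla f(x)}\le r/2+r/2=r$, while $\norm{y^{+}-x}=\tfrac1L\norm{\nabla f(x)-\nabla f(y)}\le r/2$. Hence both the descent inequality \eqref{eq:lsl} and membership in $\cX$ apply to the pairs $(x^{+},y)$ and $(y^{+},x)$.

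With this in hand the rest is routine: plugging $x_1=x^{+}$, $x_2=y$ into \eqref{eq:lsl}, rewriting in terms of $\phi$, and using $\phi(x)\le\phi(x^{+})$ yields the one-sided bound $f(x)\le f(y)+\inner{\nabla f(x), x-y}-\tfrac1{2L}\norm{\nabla f(x)-\nabla f(y)}^2$; the symmetric choice $x_1=y^{+}$, $x_2=x$ with $\psi$ and $\psi(y)\le\psi(y^{+})$ gives $f(y)\le f(x)+\inner{\nabla f(y), y-x}-\tfrac1{2L}\norm{\nabla f(x)-\nabla f(y)}^2$. Adding the two inequalities cancels $f(x)+f(y)$ and leaves $0\le\inner{\nabla f(x)-\nabla f(y), x-y}-\tfrac1L\norm{\nabla f(x)-\nabla f(y)}^2$, which is the claim. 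The only genuine obstacle is the containment check in the previous paragraph — in the global-smoothness textbook setting it is vacuous, but here one must be careful that $x^{+}$, which is displaced from $x$ by \emph{two} half-radii, still lands in the region where \cref{lem:lsl} certifies smoothness; the choice of radius $r(\norm{\nabla f(x)})/2$ in the hypothesis is precisely calibrated so that this works, and it is also why the lemma is stated for the half-ball rather than for $\cB(x,r(\norm{\nabla f(x)}))$.
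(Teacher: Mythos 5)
Your proposal is correct and follows essentially the same route as the paper's proof: the same auxiliary convex functions $\phi_x,\phi_y$ with minimizers $x,y$, the same gradient-step probe points, and the same containment check $\norm{x^{+}-x}\le \norm{y-x}+\tfrac1L\norm{\nabla f(y)-\nabla f(x)}\le 2\norm{y-x}\le r(\norm{\nabla f(x)})$ that justifies invoking the local descent inequality, followed by summing the two one-sided bounds. The only cosmetic difference is that the paper states the descent inequality directly for $\phi_x$ (whose gradient differs from $\nabla f$ by a constant), while you apply \eqref{eq:lsl} to $f$ and rewrite in terms of $\phi$, which is an equivalent manipulation.
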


\begin{proof}[Proof of Lemma~\ref{lem:weird}]
Define the Bregman divergences $\phi_x(w) := f(w) - \inner{\nabla f(x),w}$ and $\phi_y(w) := f(w) - \inner{\nabla f(y),w}$, which are both convex functions.
Since $\nabla \phi_x(w) = \nabla f(w) - \nabla f(x)$, we have $\nabla \phi_x(x)=0$ which implies $\min_{w} \phi_x(w) = \phi_x(x)$ as $\phi_x$ is convex. Similarly we have $\min_{w}\phi_y(w) = \phi_y(y)$.

Denote $r_x := r(\norm{\nabla f(x)})$. Since $f$ is $(r,\ell)$-smooth, we know its gradient $\nabla f$ is $L$-Lipschitz locally in $\cB(x,r_x)$. Since $\nabla \phi_x(w)-\nabla f(w)=\nabla f(x)$ is a constant, we know $\nabla \phi_x$ is also  $L$-Lipschitz locally in $\cB(x,r_x)$. Then similar to the proof of Lemma~\ref{lem:lsl}, one can easily show that for any $x_1,x_2\in\cB(x,r_x)$, we have
\begin{align}
\label{eq:b11}
    \phi_x(x_1)\le \phi_x(x_2)+\innerpc{\nabla \phi_x(x_2)}{x_1-x_2}+\frac{\smcst}{2}\norm{x_1-x_2}^2.
\end{align}
Note that for any $y\in \cB(x,r(\norm{\nabla f(x)})/2)$ as in the lemma statement,
\begin{align*}
    \norm{y-\frac{1}{L}\nabla \phi_x(y)-x}\le \norm{y-x}+\frac{1}{L}\norm{\nabla f(y)-\nabla f(x)}\le 2\norm{y-x}\le r_x,
\end{align*}
where the first inequality uses triangle inequality and $\nabla \phi_x(y)=\nabla f(y)-\nabla f(x)$; and the second inequality uses Definition~\ref{def:smooth_v2}. It implies that $y-\frac{1}{L}\nabla \phi_x(y)\in\cB(x,r_x)$. Then we can obtain
\begin{align*}
    \phi_x(x) =\min_w\phi_x(w) \leq \phi_x \prn*{ y - \frac{1}{\smcst} \nabla \phi_x(y)  } \leq \phi_x(y) - \frac{1}{2L} \norm{ \nabla \phi_x(y) }^2,
\end{align*}
where the last inequality uses \eqref{eq:b11} where we choose $x_1=y-\frac{1}{L}\nabla \phi_x(y)$ and $x_2=y$. By the definition of $\phi_x$, the above inequality is equivalent to 
\begin{align*}
    \frac{1}{2L} \norm{ \nabla f(y) - \nabla f(x) }^2 \leq f(y)-f(x) - \inner{\nabla f(x), x-y}.
\end{align*}
Similar argument can be made for $\phi_y(\cdot)$ to obtain
\begin{align*}
    \frac{1}{2L} \norm{ \nabla f(y) - \nabla f(x) }^2 \leq f(x)-f(y) - \inner{\nabla f(y), y-x}.
\end{align*}
Summing up the two inequalities, we can obtain the desired result.
\end{proof}

With Lemma~\ref{lem:weird}, we prove Lemma~\ref{lem:convex_gd_one_step} as follows.
\begin{proof}[\pfref{lem:convex_gd_one_step}]
    Let $\smcst = \ell(G)$. We first verify that $x^+\in \cB(x,r(G)/2)$. Note that
    \begin{align*}
        \norm{x^+ - x} &= \norm{\eta \nabla f(x)} \leq \eta G \le  r(G)/2,
    \end{align*}
    where we choose $\eta\le r(G)/(2G)$.
    Thus by \pref{lem:weird}, we have
    \begin{align*}
        \norm{\nabla f(x^+)}^2  &= \norm{\nabla f(x)}^2 + 2 \inner{\nabla f(x^+) - \nabla f(x), \nabla f(x)} + \norm{\nabla f(x^+) - \nabla f(x)}^2\\
        &=  \norm{\nabla f(x)}^2 -  \frac{2}{\eta}\inner{\nabla f(x^+) - \nabla f(x), x^+-x} + \norm{\nabla f(x^+) - \nabla f(x)}^2 \\
        &\leq \norm{\nabla f(x)}^2  + \prn*{1- \frac{2}{\eta L}} \norm{\nabla f(x^+) - \nabla f(x)}^2 \\
        &\leq \norm{\nabla f(x)}^2,
    \end{align*}
    where the first inequality uses Lemma~\ref{lem:weird} and the last inequality chooses $\eta\le 2/\smcst$.
\end{proof}

With Lemma~\ref{lem:convex_gd_one_step}, we are ready to prove both Theorem~\ref{thm:convex-gd} and Theorem~\ref{thm:strongly-convex-gd}.
\begin{proof}[\pfref{thm:convex-gd}]

Denote $G:=\norm{\nabla f(x_0)}$. Then we trivially have $\norm{\nabla f(x_0)}\le G$. Lemma~\ref{lem:convex_gd_one_step} states that if $\norm{\nabla f(x_t)}\le G$ for any $t\ge 0$, then we also have $\norm{\nabla f(x_{t+1})}\le \norm{\nabla f(x_t)}\le G$. By induction, we can show that $\norm{\nabla f(x_t)}\le G$ for all $t\ge 0$. Then the rest of the proof basically follows the standard textbook analysis. We still provide the detailed proof below for completeness.

Note that $\norm{x_{t+1} - x_t} =\eta\norm{\nabla f(x_t)}\le\eta G\le r(G)$, where we choose $\eta\le r(G)/(2G)$. Thus we can apply Lemma~\ref{lem:lsl} to obtain
\begin{align}
\label{eq:cvx_gd_eq1}
    0 &\geq f(x_{t+1}) - f(x_t) - \inner{\nabla f(x_t), x_{t+1}-x_t } - \frac{L}{2} \norm{x_{t+1}-x_t}^2\nonumber\\
    &\geq f(x_{t+1}) - f(x_t) - \inner{\nabla f(x_t), x_{t+1}-x_t } - \frac{1}{2\eta} \norm{x_{t+1}-x_t}^2,
\end{align}
where the last inequality chooses $\eta\le 1/\smcst$.
Meanwhile, by convexity between $x_t$ and $x^*$, we have
\begin{align}
\label{eq:cvx_gd_eq2}
    0 \geq f(x_t) - f^* + \inner{\nabla f(x_t), x^*-x_t}.
\end{align}
 Note that $(t+1)\times$\eqref{eq:cvx_gd_eq1}$+$\eqref{eq:cvx_gd_eq2} gives
 \begin{align*}
     0&\geq f(x_t) - f^* + \inner{\nabla f(x_t), x^*-x_t} \\
     &\quad + (1+t)\prn*{ f(x_{t+1}) - f(x_t) - \inner{\nabla f(x_t), x_{t+1}-x_t } - \frac{1}{2\eta} \norm{x_{t+1}-x_t}^2 }.
 \end{align*}

Then reorganizing the terms of the above inequality, noting that
\begin{align*}
    \norm{x_{t+1}-x^*}^2 - \norm{x_t - x^*}^2 &= \norm{x_{t+1}-x_t}^2 + 2\inner{x_{t+1}-x_t,x_t-x^*} \\
    &= \norm{x_{t+1}-x_t}^2 + 2\eta\inner{\nabla f(x_t),x^*-x_t},
\end{align*}
we can obtain
\begin{align*}
    (t+1) (f(x_{t+1}) -f^* ) + \frac{1}{2\eta} \norm{x_{t+1}-x^*}^2\leq t(f(x_{t}) -f^* ) + \frac{1}{2\eta} \norm{x_{t}-x^*}^2.
\end{align*}
The above inequality implies $t(f(x_t)-f^*)+\frac{1}{2\eta}\norm{x_{t}-x^*}^2$ is a non-increasing potential function, which directly implies the desired result.    
\end{proof}

\begin{proof}[\pfref{thm:strongly-convex-gd}] 
Since strongly convex functions are also convex, by the same argument as in the proof of Theorem~\ref{thm:convex-gd}, we have $\norm{\nabla f(x_t)}\le G:=\norm{\nabla f(x_0)}$ for all $t\ge 0$. Moreover, \eqref{eq:cvx_gd_eq1} still holds. For $\mu$-strongly-convex function, we can obtain a tighter version of \eqref{eq:cvx_gd_eq2} as follows.
\begin{align}
\label{eq:cvx_gd_eq3}
    0 \geq f(x_t) - f^* + \inner{\nabla f(x_t), x^*-x_t} + \frac{\mu}{2} \norm{x^* -x_t}^2.
\end{align}
 Let $A_0 = 0$ and $A_{t+1} = (1+A_t)/(1-\eta\mu)$ for all $t\geq 0$. Combining \eqref{eq:cvx_gd_eq1} and \eqref{eq:cvx_gd_eq3}, we have
 \begin{align*}
     0&\geq (A_{t+1}-A_t)(f(x_t) - f^* + \inner{\nabla f(x_t), x^*-x_t}) \\
     &\quad + A_{t+1}\prn*{ f(x_{t+1}) - f(x_t) - \inner{\nabla f(x_t), x_{t+1}-x_t } - \frac{1}{2\eta} \norm{x_{t+1}-x_t}^2 }.
 \end{align*}
 
Then reorganizing the terms of the above inequality, noting that
\begin{align*}
    \norm{x_{t+1}-x^*}^2 - \norm{x_t - x^*}^2 &= \norm{x_{t+1}-x_t}^2 + 2\inner{x_{t+1}-x_t,x_t-x^*} \\
    &= \norm{x_{t+1}-x_t}^2 + 2\eta\inner{\nabla f(x_t),x^*-x_t},
\end{align*}
we can obtain
\begin{align*}
    A_{t+1} (f(x_{t+1}) -f^* ) + \frac{1 + \eta\mu A_{t+1}}{2\eta} \norm{x_{t+1}-x^*}^2\leq A_t(f(x_{t}) -f^* ) + \frac{1 + \eta\mu A_t}{2\eta} \norm{x_{t}-x^*}^2.
\end{align*}
The above inequality means $A_t(f(x_{t}) -f^* ) + \frac{1 + \eta\mu A_t}{2\eta} \norm{x_{t}-x^*}^2$ is a non-increasing potential function. Thus by telescoping we have
\begin{align*}
    f(x_T) - f^* \leq \frac{\mu(1-\eta\mu)^T}{2(1-(1-\eta\mu)^T)} \norm{x_0 - x^*}^2.
\end{align*}
\end{proof}

\section{Analysis of NAG for convex functions}
\label{app:convex_nag}

In this section, we provide the detailed analysis of Nesterov's accelerated gradient method in the convex setting. As we discussed in Section~\ref{subsec:nag}, the stepsize size choice in Theorem~\ref{thm:nesterov-main} is smaller than the classical one. Therefore, we provide a more fine-grained version of the theorem, which allows the stepsize to depend on the degree of $\ell$.

\begin{theorem}
    \label{thm:nesterov}
    Suppose $f$ is convex and $\ell$-smooth. 
    For $\alpha \in (0,2]$, if $\ell(u) = o(u^{\alpha})$, i.e., $\lim_{u\to\infty} \ell(u)/u^\alpha=0$, then there must exist a constant $G$ such that for $\smcst := \ell(2G)$, we have
    \begin{align}
        \label{ineq:starting-convex-nesterov}
        G \ge \max\Set{8\max\set{L^{1/\alpha-1/2},1}\sqrt{L( (f(x_0) - f^*) + \norm{x_0-x^*}^2)}, \norm{\nabla f(x_0)}}.
    \end{align}
    Choose $\eta \leq \min \crl*{\frac{1}{16 \smcst^{3-2/\alpha}}, \frac{1}{2\smcst}}$. Then the iterates of \pref{algo:nesterov} satisfy
    \begin{align*}
        f(x_{T})- f^* \leq \frac{4(f(x_0-f^*) + 4\norm{x_0-x^*}^2}{\eta T^2 + 4}.
    \end{align*}
    \end{theorem}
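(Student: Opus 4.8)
The plan is to prove by induction on $t$ that the gradients along the trajectory of \pref{algo:nesterov} remain bounded, $\norm{\nabla f(y_t)}\le G$ and $\norm{\nabla f(x_t)}\le G$ for every $t$; once this is in hand the stated rate follows from the usual Lyapunov argument for NAG. Existence of a $G$ satisfying \eqref{ineq:starting-convex-nesterov} is immediate, since $\ell(u)=o(u^{\alpha})$ forces $\ell(2G)^{1/\alpha}/G\to 0$, so the right-hand side of \eqref{ineq:starting-convex-nesterov} grows strictly slower than $G$. The potential I would use is $\Phi_t:=A_t(f(x_t)-f^*)+\tfrac{1}{2\eta}\norm{z_t-x^*}^2$, and the first task is to record three elementary facts about $A_t,B_t$: (i) the update rule for $B_{t+1}$ is exactly the identity $(A_{t+1}-A_t)^2=B_{t+1}$; (ii) $A_{t+1}-B_{t+1}=1/\eta$ by definition; and (iii) $B_{t+1}\ge (t+1)^2/4$ and $A_{t+1}-A_t=\sqrt{B_{t+1}}\le\sqrt{A_{t+1}}$, each by a one-line induction on the $B$-recursion. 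Fact (ii) is precisely what the modification of NAG in \pref{algo:nesterov} buys us: for classical NAG ($A_{t+1}=B_{t+1}$) the analogous potential is only non-increasing, whereas the $1/\eta$ offset produces a strictly negative per-step term of order $A_{t+1}\norm{\nabla f(y_t)}^2$, which is essential for the gradient bound.

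For the inductive step I would assume $\norm{\nabla f(y_s)}\le G$ for all $s<t$ and derive three consequences. First, $x_t=y_{t-1}-\eta\nabla f(y_{t-1})$ is a gradient step from a point of gradient norm at most $G$ and $\eta$ is small, so \pref{lem:convex_gd_one_step} gives $\norm{\nabla f(x_t)}\le\norm{\nabla f(y_{t-1})}\le G$; thus \pref{lem:lsl} and \pref{lem:reverse-PL-main} apply with effective constant $\smcst=\ell(2G)$ on balls of radius $r(G)=G/\smcst$. Second, feeding the descent inequality $f(x_{s+1})\le f(y_s)-\eta(1-\eta\smcst/2)\norm{\nabla f(y_s)}^2$ and the three-point convexity inequalities at $y_s$ into $\Phi_{s+1}-\Phi_s$, the inner-product terms cancel exactly by fact (i) and the identity $y_s=\tfrac{A_s}{A_{s+1}}x_s+\tfrac{A_{s+1}-A_s}{A_{s+1}}z_s$, leaving (using facts (i)--(ii) and $\eta\le\tfrac1{2\smcst}$)
\begin{align*}
    \Phi_{s+1}-\Phi_s\;\le\;\tfrac{\eta}{2}\bigl((A_{s+1}-A_s)^2-(2-\eta\smcst)A_{s+1}\bigr)\norm{\nabla f(y_s)}^2\;\le\;-\bigl(\tfrac{\eta A_{s+1}}{4}+\tfrac12\bigr)\norm{\nabla f(y_s)}^2,
\end{align*}
so telescoping gives both $f(x_s)-f^*\le\Phi_0/A_s$ for $s\le t$ and the weighted bound $\sum_{s<t}A_{s+1}\norm{\nabla f(y_s)}^2\le 4\Phi_0/\eta$. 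Third, combining $f(x_{s+1})\le f(y_s)-\tfrac\eta2\norm{\nabla f(y_s)}^2$, the Lyapunov bound $f(x_{s+1})-f^*\le\Phi_0/A_{s+1}$, and the self-bounding form $\norm{\nabla f(y_s)}^2\le2\smcst(f(y_s)-f^*)$ of \pref{lem:reverse-PL-main} yields $(1-\eta\smcst)(f(y_s)-f^*)\le\Phi_0/A_{s+1}$, hence $f(y_s)-f^*\le2\Phi_0/A_{s+1}$ and $\norm{\nabla f(y_s)}\le 2\sqrt{\smcst\Phi_0}\,A_{s+1}^{-1/2}$; the same reasoning at $x_t$, with $A_t\ge A_0=1/\eta$, gives $\norm{\nabla f(x_t)}\le\sqrt{2\smcst\Phi_0/A_t}\le\sqrt{2\smcst\,\eta\Phi_0}\le\sqrt{2\smcst R}$, where $R:=(f(x_0)-f^*)+\norm{x_0-x^*}^2\ge\eta\Phi_0$.

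The heart of the proof, and the step I expect to be hardest, is a uniform bound on $\norm{y_t-x_t}$. The updates together with $y_t-x_t=(1-A_t/A_{t+1})(z_t-x_t)$ give the recursion $z_{t+1}-x_{t+1}=\tfrac{A_t}{A_{t+1}}(z_t-x_t)-\eta(A_{t+1}-A_t-1)\nabla f(y_t)$ with $z_0-x_0=0$, so with $d_t:=\norm{z_t-x_t}$, $a_{s+1}:=A_{s+1}-A_s$, and the refined gradient bound just derived,
\begin{align*}
    A_t d_t\;\le\;\eta\sum\nolimits_{s<t}A_{s+1}(a_{s+1}-1)\norm{\nabla f(y_s)}\;\le\;2\eta\sqrt{\smcst\Phi_0}\sum\nolimits_{s<t}A_{s+1}^{1/2}a_{s+1}\;\le\;2\eta\sqrt{\smcst\Phi_0}\sum\nolimits_{s<t}A_{s+1}\;\le\;2\eta\sqrt{\smcst\Phi_0}\,tA_t,
\end{align*}
using $a_{s+1}-1\le a_{s+1}=\sqrt{B_{s+1}}\le\sqrt{A_{s+1}}$; hence $d_t\le 2\eta\sqrt{\smcst\Phi_0}\,t$ and, since $A_{t+1}\ge B_{t+1}\ge(t+1)^2/4$,
\begin{align*}
    \norm{y_t-x_t}=\frac{a_{t+1}}{A_{t+1}}d_t\;\le\;\frac{2\eta\sqrt{\smcst\Phi_0}\,t}{\sqrt{A_{t+1}}}\;\le\;4\eta\sqrt{\smcst\Phi_0}\;\le\;4\sqrt{\eta\smcst R}.
\end{align*}
This is exactly where the tiny stepsize and its $\smcst^{3-2/\alpha}$ dependence are needed: since \eqref{ineq:starting-convex-nesterov} with $\ell=o(u^{\alpha})$ forces $G\gtrsim\smcst^{1/\alpha}\sqrt R$, the choice $\eta\le\tfrac{1}{16\smcst^{3-2/\alpha}}$ makes $4\sqrt{\eta\smcst R}$ at most $r(G)=G/\smcst$ (so \pref{lem:lsl} applies to the pair $x_t,y_t$) and also makes $\smcst\cdot4\sqrt{\eta\smcst R}\le G/2$, whence $\norm{\nabla f(y_t)}\le\norm{\nabla f(x_t)}+\smcst\norm{y_t-x_t}\le\sqrt{2\smcst R}+4\smcst\sqrt{\eta\smcst R}\le G$, closing the induction; the base case holds because $y_0=x_0$ and $\norm{\nabla f(x_0)}\le G$ by \eqref{ineq:starting-convex-nesterov}. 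With the gradient bound in hand, $\Phi_{t+1}\le\Phi_t$ gives $f(x_T)-f^*\le\Phi_0/A_T$, and $A_T\ge\tfrac1\eta+\tfrac{T^2}{4}=\tfrac{\eta T^2+4}{4\eta}$ together with the value of $\Phi_0$ yields the claimed rate.

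I expect the main obstacle to be the $\norm{y_t-x_t}$ estimate: this quantity is a linear combination of all past gradients with coefficients of order $A_s\sim s^2$, so the naive bound $\norm{\nabla f(y_s)}\le G$ only gives something growing linearly in $t$, which would break the induction. Overcoming this requires the faster $O(1/A_s)$ decay of $f(y_s)-f^*$ obtained above by feeding \pref{lem:reverse-PL-main} back into the Lyapunov bound on $f(x_{s+1})-f^*$, and then tracking the sequences $A_s,B_s,a_s$ carefully enough that the resulting sum collapses to a constant (not something growing with $t$) which, after multiplication by $\smcst$, still sits below $G$ for the prescribed stepsize. Finally, when $\ell$ is additionally sub-linear ($\alpha\le 1$), $r(G)$ and $\ell(2G)$ are of comparable order, and the same scheme goes through with the milder requirement $\eta=\cO(1/\smcst)$ recorded in the remark following the theorem.
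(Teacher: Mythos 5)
Your proposal follows essentially the same route as the paper's Appendix C proof: induction on the boundedness of $\norm{\nabla f(y_t)}$, the potential $\Phi_t=A_t(f(x_t)-f^*)+\tfrac{1}{2\eta}\norm{z_t-x^*}^2$ whose non-increase rests exactly on the offset $A_{t+1}-(A_{t+1}-A_t)^2=1/\eta$ bought by the modified weights, the recursion $z_{t+1}-x_{t+1}=\tfrac{A_t}{A_{t+1}}(z_t-x_t)-\eta(A_{t+1}-A_t-1)\nabla f(y_t)$, a refined $O(A_{s+1}^{-1/2})$ decay of $\norm{\nabla f(y_s)}$ obtained by feeding \pref{lem:reverse-PL-main} back into the telescoped potential, and the growth estimates $B_t\asymp t^2$ (your chain $\sum_{s<t}A_{s+1}^{1/2}a_{s+1}\le tA_t$ together with $\sqrt{A_{t+1}}\gtrsim t$ plays the role of \pref{lem:weights}); the final stepsize bookkeeping showing $\norm{y_t-x_t}\le 4\sqrt{\eta\smcst R}\le\min\set{r(G),\,G/(2\smcst)}$ under $\eta\le\tfrac{1}{16\smcst^{3-2/\alpha}}$ matches the paper's as well.

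One step is mis-justified as written: you claim $(1-\eta\smcst)(f(y_s)-f^*)\le\Phi_0/A_{s+1}$ by combining the descent inequality $f(x_{s+1})\le f(y_s)-\tfrac{\eta}{2}\norm{\nabla f(y_s)}^2$ with the Lyapunov bound at $x_{s+1}$ and \pref{lem:reverse-PL-main}. But the descent inequality only gives $f(y_s)\ge f(x_{s+1})+\tfrac{\eta}{2}\norm{\nabla f(y_s)}^2$, i.e., it lower-bounds $f(y_s)$, so it cannot transfer the bound $f(x_{s+1})-f^*\le\Phi_0/A_{s+1}$ upward to $f(y_s)$. The conclusion you need is nonetheless true and the repair is short: either use convexity between $x_{s+1}$ and $y_s$, namely $f(y_s)\le f(x_{s+1})+\innerpc{\nabla f(y_s)}{y_s-x_{s+1}}=f(x_{s+1})+\eta\norm{\nabla f(y_s)}^2$, and absorb the last term with \pref{lem:reverse-PL-main} to get $(1-2\eta\smcst)(f(y_s)-f^*)\le\Phi_0/A_{s+1}$; or, as the paper does, bypass $f(y_s)$ entirely by writing $\norm{\nabla f(y_s)}\le\norm{\nabla f(x_{s+1})}+\eta\smcst\norm{\nabla f(y_s)}$ (from $x_{s+1}=y_s-\eta\nabla f(y_s)$ and \pref{lem:lsl}) and applying the reverse-PL bound at $x_{s+1}$, where the Lyapunov estimate holds directly. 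With that single fix, your induction and the concluding telescoping give the stated rate, and the argument is the paper's in all essentials.
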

Note that when $\alpha=2$, i.e., $\ell$ is sub-quadratic, Theorem~\ref{thm:nesterov} reduces to Theorem~\ref{thm:nesterov-main} which chooses $\eta\le \min\{\frac{1}{16\smcst^2},\frac{1}{2\smcst}\}$. When $\alpha=1$, i.e., $\ell$ is sub-linear, the above theorem chooses $\eta\le \frac{1}{16\smcst}$ as in the classical textbook analysis up to a numerical constant factor.

Throughout this section, we will assume $f$ is convex and $\ell$-smooth, and consider the parameter choices in Theorem~\ref{thm:nesterov}, unless explicitly stated.
Note that since $f$ is $\ell$-smooth, it is also $\prn*{ r,m}$-smooth with $m(u)=\ell(u+G)$ and $r(u)=\frac{G}{\ell(u+G)}$ by \pref{prop:equ_ass}. Note that $m(G)=\ell(2G)=\smcst$ and $r(G)=G/\smcst$. Then the stepsize satisfies $\eta\le 1/(2\smcst)\le \min\{\frac{2}{m(G)},\frac{r(G)}{2G}\}$.

Before proving Theorem~\ref{thm:nesterov}, we first present several additional useful lemmas. To start with, we provide two lemmas regarding the weights $\set{A_t}_{t\ge 0}$ and $\set{B_t}_{t\ge 0}$ used in Algorithm~\ref{algo:nesterov}. The lemma below states that $B_t=\Theta(t^2)$.
\begin{lemma}
    \label{lem:control-B}
   The weights $\set{B_t}_{t\geq 0}$ in Algorithm~\ref{algo:nesterov} satisfy
    $ \frac{1}{4} t^2\leq  B_t \leq t^2$ for all $t\ge 0$.
\end{lemma}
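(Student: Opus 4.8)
The plan is to prove the claim $\frac{1}{4}t^2 \le B_t \le t^2$ by induction on $t$, using the recursion $B_{t+1} = B_t + \frac{1}{2}\left(1 + \sqrt{4B_t+1}\right)$ with $B_0 = 0$. The base case $t=0$ is immediate since $B_0 = 0$ satisfies $0 \le 0 \le 0$. For the inductive step, I would assume $\frac14 t^2 \le B_t \le t^2$ and bound $B_{t+1}$ from both sides by controlling the increment $\frac12(1+\sqrt{4B_t+1})$.

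\textbf{Upper bound.} Assuming $B_t \le t^2$, I get $\sqrt{4B_t+1} \le \sqrt{4t^2+1} \le 2t+1$ (since $(2t+1)^2 = 4t^2+4t+1 \ge 4t^2+1$). Hence $B_{t+1} \le t^2 + \frac12(1 + 2t+1) = t^2 + t + 1$. This is not quite $\le (t+1)^2 = t^2+2t+1$ unless $t \ge 0$, which holds, so $B_{t+1} \le (t+1)^2$. Good — I should double-check the edge case $t=0$ separately in case the bound $\sqrt{4B_t+1}\le 2t+1$ is loose there, but at $t=0$ we get $B_1 = \frac12(1+1) = 1 = 1^2$, consistent.

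\textbf{Lower bound.} Assuming $B_t \ge \frac14 t^2$, I get $\sqrt{4B_t+1} \ge \sqrt{t^2+1} \ge t$, so $B_{t+1} \ge \frac14 t^2 + \frac12(1+t) = \frac14 t^2 + \frac{t}{2} + \frac12 \ge \frac14 t^2 + \frac{t}{2} + \frac14 = \frac14(t+1)^2$. The last inequality uses $\frac12 \ge \frac14$. This closes the induction.

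There is no real obstacle here; the only thing to be careful about is getting the constants in the two elementary inequalities $\sqrt{4t^2+1}\le 2t+1$ and $\sqrt{t^2+1}\ge t$ lined up so that both sides of the inductive hypothesis propagate cleanly, and confirming the $t=0$ base and first step by direct computation. I would write the proof as a short induction with these two displays, keeping each bound in its own \texttt{align*} block to avoid blank-line issues.

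\begin{proof}[Proof of Lemma~\ref{lem:control-B}]
We proceed by induction on $t$. For $t=0$, we have $B_0=0$, so $\frac14\cdot 0^2 \le B_0 \le 0^2$ holds trivially.

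Now suppose $\frac14 t^2 \le B_t \le t^2$ for some $t\ge 0$. For the upper bound, since $B_t \le t^2$ and $(2t+1)^2 = 4t^2+4t+1 \ge 4t^2+1$,
\begin{align*}
    B_{t+1} = B_t + \frac12\left(1+\sqrt{4B_t+1}\right) \le t^2 + \frac12\left(1+\sqrt{4t^2+1}\right) \le t^2 + \frac12(1+2t+1) = t^2+t+1 \le (t+1)^2,
\end{align*}
where the last inequality uses $t\ge 0$. For the lower bound, since $B_t \ge \frac14 t^2$ we have $4B_t+1 \ge t^2+1 \ge t^2$, so $\sqrt{4B_t+1}\ge t$, and therefore
\begin{align*}
    B_{t+1} = B_t + \frac12\left(1+\sqrt{4B_t+1}\right) \ge \frac14 t^2 + \frac12(1+t) \ge \frac14 t^2 + \frac12 t + \frac14 = \frac14(t+1)^2.
\end{align*}
This completes the induction, and hence $\frac14 t^2 \le B_t \le t^2$ for all $t\ge 0$.
\end{proof}
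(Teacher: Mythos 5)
Your proof is correct and follows essentially the same route as the paper: induction on $t$, bounding the increment $\frac12\bigl(1+\sqrt{4B_t+1}\bigr)$ from above by $\frac12(2+2t)$ using $B_t\le t^2$ and from below using $B_t\ge\frac14 t^2$ (the paper keeps $\sqrt{t^2+1}$ where you use the slightly cruder $\sqrt{t^2+1}\ge t$, but the conclusion is identical). No gaps.
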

\begin{proof}[Proof of Lemma~\ref{lem:control-B}]
    We prove this lemma by induction. First note that the inequality obviously holds for $B_0=0$. Suppose its holds up to $t$. Then we have
    \begin{align*}
        B_{t+1} =  B_t + \frac{1}{2}( 1+ \sqrt{ 4B_t+1} ) 
        \geq \frac{1}{4} t^2 + \frac{1}{2} (1+ \sqrt{t^2+1})
        \geq \frac{1}{4}(t+1)^2.
    \end{align*}
    Similarly, we have
    \begin{align*}
        B_{t+1} =  B_t + \frac{1}{2}( 1+ \sqrt{ 4B_t+1} ) 
        \leq t^2 + \frac{1}{2}(1+\sqrt{4t^2+1}) 
        \leq (t+1)^2.
    \end{align*}
\end{proof}

Lemma~\ref{lem:control-B} implies the following useful lemma.

\begin{lemma}
\label{lem:weights}
    The weights $\set{A_t}_{t\geq 0}$ in Algorithm~\ref{algo:nesterov} satisfy that 
    \begin{align*}
        &(1- \frac{A_t}{A_{t+1}}) \frac{1}{A_{t}} \sum_{s=0}^{t-1} \sqrt{ A_{s+1}}  (A_{s+1} -A_s-1)\leq 4.
    \end{align*}
\end{lemma}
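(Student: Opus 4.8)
The plan is to reduce everything to two elementary facts about the weight sequences. First, since $A_{t+1}=B_{t+1}+1/\eta$ and $A_t=B_t+1/\eta$ we have $A_{t+1}-A_t=B_{t+1}-B_t$, the sequence $\set{B_t}$ is nondecreasing (as $B_{t+1}-B_t=\tfrac12(1+\sqrt{4B_t+1})>0$), $A_t\ge B_t$, and $A_t-A_0=B_t$. Second, substituting $u=\sqrt{4B_t+1}$ into the recursion $B_{t+1}=B_t+\tfrac12(1+\sqrt{4B_t+1})$ gives $B_{t+1}=\tfrac{(u+1)^2}{4}$, hence the clean identity $B_{t+1}-B_t=\sqrt{B_{t+1}}$, equivalently $A_{t+1}-A_t=\sqrt{B_{t+1}}$. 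In particular $A_{s+1}-A_s-1=\sqrt{B_{s+1}}-1\ge0$ for every $s\ge0$ since $B$ is nondecreasing and $B_1=1$ (one could alternatively run the whole argument off the two-sided bounds $\tfrac14 t^2\le B_t\le t^2$ of Lemma~\ref{lem:control-B}, at the cost of a little more bookkeeping).

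With these facts the estimate is short. Because every summand is nonnegative and $\set{A_t}$ is nondecreasing, I would bound $\sqrt{A_{s+1}}\le\sqrt{A_t}$ for $s\le t-1$, pull it out of the sum, and telescope:
\begin{align*}
\sum_{s=0}^{t-1}\sqrt{A_{s+1}}\,(A_{s+1}-A_s-1)\le\sqrt{A_t}\sum_{s=0}^{t-1}(A_{s+1}-A_s-1)=\sqrt{A_t}\,(A_t-A_0-t)=\sqrt{A_t}\,(B_t-t).
\end{align*}
Then I would rewrite the prefactor as $1-\tfrac{A_t}{A_{t+1}}=\tfrac{A_{t+1}-A_t}{A_{t+1}}=\tfrac{\sqrt{B_{t+1}}}{A_{t+1}}$, so that the whole left-hand side is at most
\begin{align*}
\frac{\sqrt{B_{t+1}}}{A_{t+1}}\cdot\frac{B_t-t}{\sqrt{A_t}}\le\frac{\sqrt{B_{t+1}}}{B_{t+1}}\cdot\frac{B_t}{\sqrt{B_t}}=\sqrt{\frac{B_t}{B_{t+1}}}\le 1\le 4,
\end{align*}
using $A_{t+1}\ge B_{t+1}$, $A_t\ge B_t$, $0\le B_t-t\le B_t$ (the lower bound $B_t\ge t$ following by summing $A_{s+1}-A_s-1\ge0$), and $B_t\le B_{t+1}$. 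The boundary cases $t=0,1$ are trivial, the sum being empty or vanishing ($B_1-1=0$).

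The only genuine subtlety is spotting and proving the identity $B_{t+1}-B_t=\sqrt{B_{t+1}}$, which collapses the bound to something far below the claimed $4$; without it one still succeeds via Lemma~\ref{lem:control-B}. A secondary point to get right is the crude replacement $\sqrt{A_{s+1}}\le\sqrt{A_t}$ inside the sum, which is legitimate precisely because each term $A_{s+1}-A_s-1$ is nonnegative — and this is where the fact $B_1=1$ (so that the $s=0$ term is zero rather than negative) is used.
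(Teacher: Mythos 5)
Your proof is correct. It shares the paper's opening move — using the nonnegativity of each summand (which you justify via $B_1=1$ and monotonicity) to replace $\sqrt{A_{s+1}}$ by $\sqrt{A_t}$ and pull it out of the sum — but from there the two arguments diverge. The paper keeps the sum $\sum_{s=0}^{t-1}(B_{s+1}-B_s-1)$, bounds it and the factor $\frac{B_{t+1}-B_t}{A_{t+1}\sqrt{A_t}}$ termwise using the quadratic growth estimates $\tfrac14 t^2\le B_t\le t^2$ of Lemma~\ref{lem:control-B}, and lands on $\frac{4t}{t+1}\le 4$. You instead exploit the exact closed-form consequence of the recursion, $B_{t+1}-B_t=\sqrt{B_{t+1}}$ (your substitution $B_{t+1}=\tfrac{(u+1)^2}{4}$ with $u=\sqrt{4B_t+1}$ checks out), telescope the sum exactly to $B_t-t$, and use only $A_t\ge B_t$ and monotonicity of $B$ to get the bound $\sqrt{B_t/B_{t+1}}\le 1\le 4$. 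Your route is self-contained (no appeal to Lemma~\ref{lem:control-B}), slightly cleaner, and yields a sharper constant; the paper's route is cruder but reuses a lemma it needs anyway and makes the $\Theta(t^2)$ scaling of the weights explicit. Your handling of the edge cases $t=0,1$ (empty or vanishing sum, avoiding division by $\sqrt{B_0}=0$) is also correct.
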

\begin{proof}[Proof of Lemma~\ref{lem:weights}]
First, note that it is easy to verify that $A_{s+1}-A_s-1=B_{s+1}-B_s-1\ge 0$, which implies each term in the LHS of the above inequality is non-negative. Then we have 
\begin{align*}
    &(1- \frac{A_t}{A_{t+1}}) \frac{1}{A_{t}} \sum_{s=0}^{t-1} \sqrt{ A_{s+1}}  (A_{s+1} -A_s-1)\\
    &\leq \frac{1}{A_{t+1}\sqrt{A_t}}  (A_{t+1} - A_t) \sum_{s=0}^{t-1}  (A_{s+1} -A_s-1) \tag{$A_t \geq A_{s+1}$} \\
    &= \frac{1}{A_{t+1}\sqrt{A_t}}  (B_{t+1} - B_t) \sum_{s=0}^{t-1}  (B_{s+1} -B_s-1) \tag{$A_s = B_s + 1/\eta$}\\
    &= \frac{1}{A_{t+1}\sqrt{A_t}} \cdot  \frac{1}{2}( 1+ \sqrt{ 4B_t+1} ) \sum_{s=0}^{t-1} \prn*{- 1 + \frac{1}{2}( 1+ \sqrt{ 4B_s+1}) } \tag{by definition of $B_s$}\\
    &\leq 8  \frac{1}{(t+1)^2 t}\cdot(t+1) \frac{t^2}{2} \tag{by $A_t\geq B_t$ and \pref{lem:control-B}}\\
    &\leq 4.
\end{align*}
\end{proof}

The following lemma summarizes the results in the classical potential function analysis of NAG in \citep{OPT-036}. In order to not deal with the generalized smoothness condition for now, we directly assume the inequality \eqref{eq:by_smoothness} holds in the lemma, which will be proved later under the generalized smoothness condition.
\begin{lemma}
    \label{lem:one-step}
    For any $t\geq 0$, if the following inequality holds,
    \begin{align}
    \label{eq:by_smoothness}
        f(y_t) + \inner{\nabla f(y_t), x_{t+1}-y_t} + \frac{1}{2\eta} \norm{x_{t+1}-y_t}^2 \geq f(x_{t+1}),
    \end{align}
    then we can obtain
    \begin{align}
    \label{eq:nag_potential}
        A_{t+1}(f(x_{t+1})- f^*) +  \frac{1}{2\eta} \norm{z_{t+1} - x^*}^2   \leq A_t(f(x_t)-f^*) + \frac{1}{2\eta} \norm{z_{t} - x^*}^2.
    \end{align}
\end{lemma}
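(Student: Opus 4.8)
\textbf{Proof proposal for Lemma~\ref{lem:one-step}.}

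The plan is to carry out the standard estimate-sequence / potential-function argument for NAG, using \eqref{eq:by_smoothness} exactly where the classical proof invokes the descent lemma, so that the generalized smoothness is entirely encapsulated in that hypothesis. First I would record the two convexity inequalities at the point $y_t$: for the comparison with $x_t$,
\begin{align*}
    f(x_t) \ge f(y_t) + \inner{\nabla f(y_t), x_t - y_t},
\end{align*}
and for the comparison with $x^*$,
\begin{align*}
    f^* \ge f(y_t) + \inner{\nabla f(y_t), x^* - y_t}.
\end{align*}
I would then form the convex combination $(A_{t+1}-A_t)\times(\text{second}) + A_t\times(\text{first})$ and add $A_{t+1}\times\eqref{eq:by_smoothness}$ to it. Using $x_{t+1} = y_t - \eta\nabla f(y_t)$ and $z_{t+1} = z_t - \eta(A_{t+1}-A_t)\nabla f(y_t)$, the linear-in-$\nabla f(y_t)$ terms should combine into a telescoping form. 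The key algebraic identity to exploit is that the definition of $y_t$ in Line~5 of Algorithm~\ref{algo:nesterov}, namely $A_{t+1}y_t = A_t x_t + (A_{t+1}-A_t) z_t$ (which follows from $y_t = x_t + (1 - A_t/A_{t+1})(z_t - x_t)$), makes the first-order terms involving $z_t$, $x_t$ and $y_t$ cancel against each other.

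Concretely, after substituting the updates, the quadratic term $\frac{A_{t+1}}{2\eta}\norm{x_{t+1}-y_t}^2 = \frac{\eta A_{t+1}}{2}\norm{\nabla f(y_t)}^2$ needs to be dominated by the change in the $z$-potential. I would expand
\begin{align*}
    \norm{z_{t+1}-x^*}^2 - \norm{z_t - x^*}^2 = -2\eta(A_{t+1}-A_t)\inner{\nabla f(y_t), z_t - x^*} + \eta^2 (A_{t+1}-A_t)^2 \norm{\nabla f(y_t)}^2,
\end{align*}
and the cross term supplies exactly the inner products needed to match the convexity inequalities once $z_t - x^*$ is rewritten via $y_t$. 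The residual quadratic terms then reduce to the requirement $\eta A_{t+1} \ge \eta^2(A_{t+1}-A_t)^2$, i.e. $A_{t+1} \ge \eta(A_{t+1}-A_t)^2$; this is precisely what the recursion $B_{t+1} = B_t + \tfrac12(1+\sqrt{4B_t+1})$ together with $A_{t+1} = B_{t+1} + 1/\eta$ is designed to guarantee, since $B_{t+1}-B_t = \tfrac12(1+\sqrt{4B_t+1})$ solves $(B_{t+1}-B_t)^2 - (B_{t+1}-B_t) - B_t = 0$, hence $(A_{t+1}-A_t)^2 = (B_{t+1}-B_t)^2 = B_t + (B_{t+1}-B_t) = B_{t+1} \le A_{t+1}/\eta$ (using $1/\eta > 0$). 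Collecting everything and dividing appropriately yields \eqref{eq:nag_potential}.

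The main obstacle I anticipate is purely bookkeeping: getting all the first-order terms $\inner{\nabla f(y_t), \cdot}$ to cancel requires carefully tracking the coefficients $A_t$, $A_{t+1}-A_t$, and the role of $y_t$ as the specific convex combination in Line~5, and it is easy to make sign errors when expanding $\norm{z_{t+1}-x^*}^2$. A secondary subtlety is verifying the inequality $A_{t+1} \ge \eta(A_{t+1}-A_t)^2$ cleanly from the $B_t$ recursion rather than from $B_t$ bounds; doing it via the exact quadratic that $B_{t+1}-B_t$ satisfies (as above) avoids any loss and makes the choice $A_{t+1} = B_{t+1} + 1/\eta$ — as opposed to the classical $A_{t+1} = B_{t+1}$ — transparent. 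No use of $\ell$-smoothness is needed in this lemma beyond the already-granted inequality \eqref{eq:by_smoothness}, so once the algebra is organized the result follows.
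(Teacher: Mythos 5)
Your proposal is correct and follows essentially the same route as the paper's proof: the same weighted combination of the two convexity inequalities (weights $A_t$ and $A_{t+1}-A_t$) with $A_{t+1}\times$\eqref{eq:by_smoothness}, cancellation of the first-order terms via the identity $A_{t+1}y_t = A_t x_t + (A_{t+1}-A_t)z_t$ from Line~5, expansion of $\norm{z_{t+1}-x^*}^2-\norm{z_t-x^*}^2$, and the exact quadratic identity $(A_{t+1}-A_t)^2 = (B_{t+1}-B_t)^2 = B_{t+1}$ from the recursion. One bookkeeping slip worth fixing: carrying the $\frac{1}{2\eta}$ prefactor of the $z$-potential consistently, the residual condition is $A_{t+1}\ge (A_{t+1}-A_t)^2$ (indeed $A_{t+1}-(A_{t+1}-A_t)^2 = 1/\eta\ge 0$, which is exactly how the paper concludes), not $A_{t+1}\ge \eta (A_{t+1}-A_t)^2$; since your verification already establishes $(A_{t+1}-A_t)^2 = B_{t+1} = A_{t+1}-1/\eta$, the correct condition follows immediately and nothing essential is affected.
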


\begin{proof}[Proof of Lemma \ref{lem:one-step}]
These derivations below can be found in \citep{OPT-036}. We present them here for completeness.

First, since $f$ is convex, the convexity between $x^*$ and $y_t$ gives
\begin{align*}
f^* \geq f(y_t) + \inner{\nabla f(y_t), x^*-y_t}.
\end{align*}
Similarly the convexity between $x_t$ and $y_t$ gives 
\begin{align*}
f(x_t) \geq f(y_t) + \inner{\nabla f(y_t),x_t-y_t}.
\end{align*}
Combining the above two inequalities as well as \eqref{eq:by_smoothness} assumed in this lemma, we have
\begin{align}
0&\ge (A_{t+1}-A_t) ( f(y_t) - f^* + \inner{\nabla f(y_t), x^*-y_t}) \notag\\
&\quad + A_t ( f(y_t) - f(x_t) + \inner{\nabla f(y_t),x_t-y_t} ) \notag\\
&\quad + A_{t+1}\prn*{ f(x_{t+1}) -  f(y_t) - \inner{\nabla f(y_t), x_{t+1}-y_t} - \frac{1}{2\eta} \norm{x_{t+1}-y_t}^2   }. \label{ineq:one-step}
\end{align}
Furthermore, note that 
\begin{align}
&\frac{1}{2\eta} \prn*{\norm{z_{t+1} - x^*}^2 - \norm{z_{t} - x^*}^2 } \notag\\
&= \frac{1}{2\eta} \prn*{ \norm{z_{t+1} - z_t}^2 + 2\inner{z_{t+1}-z_t, z_t- x^*}  }\notag\\
&= \frac{1}{2\eta}\prn*{ \eta^2(A_{t+1}-A_t)^2 \norm{\nabla f(y_t)}^2  - 2\eta (A_{t+1}-A_t) \inner{\nabla f(y_t), z_t-x^* }}\notag\\
&= \frac{\eta}{2} (A_{t+1}-A_t)^2 \norm{\nabla f(y_t)}^2 - (A_{t+1}-A_t)\inner{\nabla f(y_t), z_t - x^*}. \label{eq:norm-diff}
\end{align}
Meanwhile, we have
\begin{align*}
A_{t+1}x_{t+1} &= A_{t+1}y_t - \eta A_{t+1}\nabla f(y_t)= A_{t+1} x_t + (A_{t+1}-A_t)(z_t-x_t) - \eta A_{t+1}\nabla f(y_t).
\end{align*}
Thus we have
\begin{align*}
    (A_{t+1}-A_t)z_t = A_{t+1}x_{t+1} - A_t x_t + \eta A_{t+1}\nabla f(y_t).
\end{align*}
Plugging back in (\ref{eq:norm-diff}), we obtain
\begin{align*}
    &\frac{1}{2\eta} \prn*{\norm{z_{t+1} - x^*}^2 - \norm{z_{t} - x^*}^2 } \\
    &= \frac{\eta}{2} (A_{t+1}-A_t)^2 \norm{\nabla f(y_t)}^2 + (A_{t+1}-A_t)\inner{\nabla f(y_t),  x^*} \\
    &\quad+ \inner*{-A_{t+1}x_{t+1} + A_t x_t - \eta A_{t+1}\nabla f(y_t),\nabla f(y_t)}.
\end{align*}
Thus 
\begin{align*}
    &(A_{t+1}-A_t)\inner{\nabla f(y_t),  x^*} +  \inner*{A_t x_t - A_{t+1}x_{t+1}  ,\nabla f(y_t)} \\
    &= \frac{1}{2\eta} \prn*{\norm{z_{t+1} - x^*}^2 - \norm{z_{t} - x^*}^2 }  + \eta (A_{t+1} - \frac{1}{2}(A_{t+1}-A_t)^2 )\norm{\nabla f(y_t)}^2.
\end{align*}
So we can reorganize (\ref{ineq:one-step}) to obtain
\begin{align*}
0 &\ge A_{t+1}(f(x_{t+1}) - f^*) - A_t(f(x_t) - f^*)\\
&\quad + (A_{t+1}-A_t)\inner{\nabla f(y_t),  x^*} +  \inner*{A_t x_t - A_{t+1}x_{t+1}  ,\nabla f(y_t)}\\
&\quad- \frac{1}{2\eta} A_{t+1} \norm{x_{t+1}-y_t}^2\\
&= A_{t+1}(f(x_{t+1}) - f^*) - A_t(f(x_t) - f^*)\\
&\quad + \frac{1}{2\eta} \prn*{\norm{z_{t+1} - x^*}^2 - \norm{z_{t} - x^*}^2 }  + \frac{\eta}{2} ( A_{t+1} - (A_{t+1}-A_t)^2 )\norm{\nabla f(y_t)}^2.
\end{align*}
Then we complete the proof noting that it is easy to verify
\begin{align*}
    A_{t+1} - (A_{t+1}-A_t)^2  = B_{t+1} + \frac{1}{\eta} - (B_{t+1}-B_t)^2 = \frac{1}{\eta} \ge 0.
\end{align*}
\end{proof}

In the next lemma, we show that if $\norm{\nabla f(y_t)} \leq G$, then the condition \eqref{eq:by_smoothness} assumed in Lemma~\ref{lem:one-step} is satisfied at time $t$.

\begin{lemma}
\label{lem:gradient-one-step}
 For any $t\geq 0$, if $\norm{\nabla f(y_t)} \leq G$, then 
    we have $\norm{\nabla f(x_{t+1})} \leq G$, and furthermore,
\begin{align*}
    f(y_t) + \inner{\nabla f(y_t), x_{t+1}-y_t} + \frac{1}{2\eta} \norm{x_{t+1}-y_t}^2 \geq f(x_{t+1}).
\end{align*}
\end{lemma}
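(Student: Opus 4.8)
The plan is to first establish $\norm{\nabla f(x_{t+1})} \leq G$ and then derive the descent-type inequality \eqref{eq:by_smoothness} as a direct consequence of Lemma~\ref{lem:lsl}. For the first part, recall that by Line~6 of Algorithm~\ref{algo:nesterov}, $x_{t+1} = y_t - \eta\nabla f(y_t)$, which is exactly a gradient descent step from $y_t$. Since $f$ is convex and $\ell$-smooth, Proposition~\ref{prop:equ_ass} gives that $f$ is $(r,m)$-smooth with $m(u)=\ell(u+G)$ and $r(u)=G/\ell(u+G)$, so that $m(G)=\ell(2G)=\smcst$ and $r(G)=G/\smcst$. With the stepsize restriction $\eta\le\min\{\frac{1}{16\smcst^{3-2/\alpha}},\frac{1}{2\smcst}\}\le\min\{\frac{2}{m(G)},\frac{r(G)}{2G}\}$ (using $\smcst\ge 1$, which we can assume WLOG or which follows from the lower bound on $G$), the hypothesis $\norm{\nabla f(y_t)}\le G$ lets us invoke Lemma~\ref{lem:convex_gd_one_step} with $x=y_t$, $x^+=x_{t+1}$. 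This immediately yields $x_{t+1}\in\cX$ and $\norm{\nabla f(x_{t+1})}\le\norm{\nabla f(y_t)}\le G$.

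For the second part, I would apply Lemma~\ref{lem:lsl} with the bound $\norm{\nabla f(y_t)}\le G$. First I need to check the radius condition: $\norm{x_{t+1}-y_t} = \eta\norm{\nabla f(y_t)} \le \eta G \le r(G)/2 \le r(G)$, using $\eta\le r(G)/(2G) = 1/(2\smcst)$. Hence both $y_t$ and $x_{t+1}$ lie in $\cB(y_t, r(G))$, and Lemma~\ref{lem:lsl} (with the effective smoothness constant $\smcst=\ell(2G)$, or more precisely $m(G)=\smcst$ in the $(r,m)$-smooth reformulation) gives
\begin{align*}
    f(x_{t+1}) \le f(y_t) + \innerpc{\nabla f(y_t)}{x_{t+1}-y_t} + \frac{\smcst}{2}\norm{x_{t+1}-y_t}^2.
\end{align*}
Since $\eta\le 1/(2\smcst)\le 1/\smcst$, we have $\frac{\smcst}{2}\le\frac{1}{2\eta}$, so the right-hand side is bounded by $f(y_t) + \innerpc{\nabla f(y_t)}{x_{t+1}-y_t} + \frac{1}{2\eta}\norm{x_{t+1}-y_t}^2$, which is exactly the claimed inequality \eqref{eq:by_smoothness}.

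There is no serious obstacle here: the lemma is essentially a repackaging of Lemma~\ref{lem:convex_gd_one_step} (for the gradient bound) and Lemma~\ref{lem:lsl} (for the descent inequality), both already established. The only point requiring a little care is verifying that the stepsize constraint in Theorem~\ref{thm:nesterov} is tight enough to satisfy the hypotheses of those two lemmas simultaneously — in particular that $\eta\le\frac{r(G)}{2G}$ and $\eta\le\frac{1}{\smcst}$ both follow from $\eta\le\frac{1}{2\smcst}$ given $r(G)=G/\smcst$. This is immediate once the $(r,m)$-smooth reformulation from Proposition~\ref{prop:equ_ass} is in place, so the proof is short.
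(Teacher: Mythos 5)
Your proposal is correct and matches the paper's own argument: the first claim is obtained by applying Lemma~\ref{lem:convex_gd_one_step} to the gradient step $x_{t+1}=y_t-\eta\nabla f(y_t)$, and the second by Lemma~\ref{lem:lsl} together with $\eta\le 1/(2\smcst)$ so that $\smcst/2\le 1/(2\eta)$. The aside about needing $\smcst\ge 1$ is unnecessary, since $\eta\le\min\{\frac{1}{16\smcst^{3-2/\alpha}},\frac{1}{2\smcst}\}\le\frac{1}{2\smcst}=\min\{\frac{2}{m(G)},\frac{r(G)}{2G}\}$ holds automatically with $r(G)=G/\smcst$, but this does not affect the validity of the proof.
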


\begin{proof}[Proof of Lemma \ref{lem:gradient-one-step}]
As disccued below Theorem~\ref{thm:nesterov}, the stepsize satisfies $\eta\le 1/(2\smcst)\le \min\{\frac{2}{m(G)},\frac{r(G)}{2G}\}$. Therefore we can apply Lemma~\ref{lem:convex_gd_one_step} to show $\norm{\nabla f(x_{t+1})}\leq \norm{\nabla f(y_t)}\le G$. For the second part, note that $\norm{x_{t+1}-y_t}=\eta\norm{\nabla f(y_t)}\le \frac{G}{2\smcst}\le r(G)$, we can apply Lemma~\ref{lem:lsl} to show
\begin{align*}
    f(x_{t+1}) &\leq f(y_t) + \inner{\nabla f(y_t), x_{t+1}-y_t} + \frac{L}{2} \norm{x_{t+1}-y_t}^2 \\
    &\leq f(y_t) + \inner{\nabla f(y_t), x_{t+1}-y_t} + \frac{1}{2\eta} \norm{x_{t+1}-y_t}^2 .
\end{align*}
\end{proof}

With Lemma~\ref{lem:one-step} and Lemma~\ref{lem:gradient-one-step}, we can show that $\norm{\nabla f(y_t)}\le G$ for all $t\ge 0$, as in the lemma below.
\begin{lemma}
\label{lem:the-hard-one}
For all $t\ge 0$,  $\norm{\nabla f(y_t)}\le G$.
\end{lemma}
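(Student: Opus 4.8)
The plan is to run a \emph{strong} induction on $t$, coupling the potential-function telescoping of \pref{lem:one-step} with the reverse-PL inequality \pref{lem:reverse-PL-main} in order to control the auxiliary displacement $y_t-x_t$. The base case $t=0$ is trivial, since $y_0=x_0$ and $\norm{\nabla f(x_0)}\le G$ by \eqref{ineq:starting-convex-nesterov}. For the inductive step I would assume $\norm{\nabla f(y_s)}\le G$ for all $s<t$ and deduce $\norm{\nabla f(y_t)}\le G$. First, by \pref{lem:gradient-one-step} the hypothesis yields $\norm{\nabla f(x_{s+1})}\le G$ for all $s<t$ (in particular $\norm{\nabla f(x_t)}\le G$) and verifies condition \eqref{eq:by_smoothness} at every step $s<t$; hence \pref{lem:one-step} applies at $s=0,\dots,t-1$, and telescoping \eqref{eq:nag_potential} together with $A_0=1/\eta$, $B_0=0$, $z_0=x_0$ gives, for every $s\le t$ and with $D_0:=\bigl(f(x_0)-f^*\bigr)+\norm{x_0-x^*}^2$,
\begin{align*}
A_s\bigl(f(x_s)-f^*\bigr)+\tfrac{1}{2\eta}\norm{z_s-x^*}^2\ \le\ \tfrac1\eta\Bigl(\bigl(f(x_0)-f^*\bigr)+\tfrac12\norm{x_0-x^*}^2\Bigr)\ \le\ \tfrac{D_0}{\eta}.
\end{align*}
In particular $f(x_s)-f^*\le D_0/(\eta A_s)$, and since $A_s\ge A_0=1/\eta$ also $f(x_s)-f^*\le D_0$.

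Next I would convert this function-value decay into decay of the gradient norms \emph{along the trajectory}, which is what will keep the displacement from growing with $t$. For $k<t$ we have $\norm{\nabla f(y_k)}\le G$, so \pref{lem:reverse-PL-main} gives $\norm{\nabla f(y_k)}^2\le 2\smcst\,(f(y_k)-f^*)$. Writing $y_k=x_{k+1}+\eta\nabla f(y_k)$, applying the smoothness inequality of \pref{lem:lsl} at $x_{k+1}$ (valid since $\norm{\nabla f(x_{k+1})}\le G$ and $\eta\le 1/\smcst$), using the substep contraction $\norm{\nabla f(x_{k+1})}\le\norm{\nabla f(y_k)}$ from \pref{lem:gradient-one-step}, reverse-PL at $x_{k+1}$, and $\eta\smcst\le 1/2$, one obtains $\norm{\nabla f(y_k)}^2=O\bigl(\smcst(f(x_{k+1})-f^*)\bigr)$, hence with the previous step
\begin{align*}
\norm{\nabla f(y_k)}\ \le\ 4\sqrt{\tfrac{\smcst D_0}{\eta A_{k+1}}}\,,\qquad k<t.
\end{align*}
The extra factor $A_{k+1}^{-1/2}$ here, rather than the crude bound $G$, is the gain that makes the argument work.

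The step I expect to be the main obstacle is bounding $\norm{y_t-x_t}$. Unrolling the updates of \pref{algo:nesterov} shows that the displacement $d_s:=z_s-x_s$ satisfies $d_0=0$ and $d_{s+1}=\tfrac{A_s}{A_{s+1}}d_s-\eta(A_{s+1}-A_s-1)\nabla f(y_s)$, so that $z_t-x_t=-\tfrac{\eta}{A_t}\sum_{k<t}A_{k+1}(A_{k+1}-A_k-1)\nabla f(y_k)$ and therefore, using $A_{s+1}-A_s-1\ge 0$,
\begin{align*}
\norm{y_t-x_t}\ =\ \Bigl(1-\tfrac{A_t}{A_{t+1}}\Bigr)\norm{z_t-x_t}\ \le\ \frac{\eta}{A_t}\Bigl(1-\tfrac{A_t}{A_{t+1}}\Bigr)\sum_{k=0}^{t-1}A_{k+1}(A_{k+1}-A_k-1)\,\norm{\nabla f(y_k)}.
\end{align*}
Substituting the bound of the previous paragraph turns the right-hand side into exactly the weighted sum $\bigl(1-\tfrac{A_t}{A_{t+1}}\bigr)\tfrac1{A_t}\sum_{k<t}\sqrt{A_{k+1}}(A_{k+1}-A_k-1)$ estimated in \pref{lem:weights}, which gives $\norm{y_t-x_t}\le 16\sqrt{\eta\smcst D_0}$. (If one only used $\norm{\nabla f(y_k)}\le G$ here, this bound would instead grow with $t$, which is exactly the difficulty the reverse-PL step resolves.)

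Finally I would close the induction. Since $\eta\le 1/(16\smcst^2)$ and $G\ge 8\sqrt{\smcst D_0}$, the last bound gives $\norm{y_t-x_t}\le 4\sqrt{D_0/\smcst}\le G/\smcst=r(G)$, so $y_t\in\cB(x_t,r(G))$ and \pref{lem:lsl} yields $\norm{\nabla f(y_t)-\nabla f(x_t)}\le \smcst\norm{y_t-x_t}\le 4\sqrt{\smcst D_0}\le G/2$. On the other hand, $\norm{\nabla f(x_t)}\le G$ and $f(x_t)-f^*\le D_0$ combined with \pref{lem:reverse-PL-main} give $\norm{\nabla f(x_t)}^2\le 2\smcst D_0\le G^2/32$. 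A triangle inequality then gives $\norm{\nabla f(y_t)}\le\norm{\nabla f(x_t)}+\norm{\nabla f(y_t)-\nabla f(x_t)}\le G$, closing the induction. Note that the requirement $\eta=O(1/\smcst^2)$, rather than the classical $O(1/\smcst)$, is forced precisely at the comparison $\norm{y_t-x_t}\asymp\sqrt{\eta\smcst D_0}\le r(G)\asymp\sqrt{D_0/\smcst}$.
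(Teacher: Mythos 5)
Your proposal is correct and follows essentially the same route as the paper's proof: induction, telescoping the potential via \pref{lem:one-step} and \pref{lem:gradient-one-step}, converting the function-value decay into gradient decay via \pref{lem:reverse-PL-main}, unrolling $y_t-x_t$ as a weighted sum of past gradients controlled by \pref{lem:weights}, and closing with the local Lipschitz bound; your pointwise bound $(1-\eta\smcst)\norm{\nabla f(y_k)}\le\norm{\nabla f(x_{k+1})}$ is just a per-term version of the paper's self-bounding step on the aggregate term $\HardTerm$. The only (minor) difference is that you instantiate the constants for the sub-quadratic case $\alpha=2$ of Theorem~\ref{thm:nesterov-main}, whereas the appendix proves the lemma under the general parameter choices of Theorem~\ref{thm:nesterov}; your argument extends verbatim with the corresponding $\eta\le 1/(16\smcst^{3-2/\alpha})$ bookkeeping.
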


\begin{proof}[Proof of \pref{lem:the-hard-one}]
We will prove this lemma by induction. First, by Lemma~\ref{lem:reverse-PL-main} and the choice of $G$, it is easy to verify that $\norm{\nabla f(x_0)}\le G$. Then for any fixed $t\ge 0$, suppose that $\norm{\nabla f(x_s)}\le G$ for all $s< t$. Then by Lemma~\ref{lem:one-step} and Lemma~\ref{lem:gradient-one-step}, we know that $\norm{\nabla f(x_s)} \leq G$ for all $0\leq s\leq t$, and that for all $s<t$,
\begin{align}
    A_{s+1}(f(x_{s+1})- f^*) +  \frac{1}{2\eta} \norm{z_{s+1} - x^*}^2   \leq A_s(f(x_s)-f^*) + \frac{1}{2\eta} \norm{z_s - x^*}^2.  \label{ineq:potential-convex-nesterov}
\end{align}  
By telescoping (\ref{ineq:potential-convex-nesterov}), we have for all $0\leq s< t$,
\begin{align}
    f(x_{s+1})- f^* \leq \frac{1}{\eta A_{s+1}}  ((f(x_0) - f^*) + \norm{z_0-x^*}^2). \label{ineq:control-FV}
\end{align}

 For $0\leq s\leq t$, since $\norm{\nabla f(x_s)} \leq G$, then \pref{lem:reverse-PL-main} implies
\begin{align}
    \norm{\nabla f(x_s)}^2 \leq 2\smcst (f(x_s)-f^*). \label{ineq:lsl}
\end{align}

Note that by Algorithm~\ref{algo:nesterov}, we have
\begin{align*}
    z_{t} - x_{t} = \frac{A_{t-1}}{A_{t}} (z_{t-1}-x_{t-1}) - \eta(A_{t} - A_{t-1}) \nabla f(y_{t-1}) + \eta \nabla f(y_{t-1}).
\end{align*}
Thus we can obtain
\begin{align*}
    z_{t} - x_{t} =  - \frac{1}{A_{t}} \sum_{s=1}^{t-1} \eta A_{s+1}   (A_{s+1} -A_s-1)  \nabla f(y_s).
\end{align*}
Therefore
\begin{align*}
    y_t - x_t = - (1- \frac{A_t}{A_{t+1}}) \frac{1}{A_{t}} \sum_{s=1}^{t-1} \eta A_{s+1}   (A_{s+1} -A_s-1)  \nabla f(y_s).
\end{align*}
Thus we have
\begin{align*}
    \norm{y_t - x_t} &\leq (1- \frac{A_t}{A_{t+1}}) \frac{1}{A_{t}} \sum_{s=1}^{t-1} \eta A_{s+1}   (A_{s+1} -A_s-1) \norm{\nabla f(y_s)} \rdef \HardTerm.
\end{align*}
Since $\norm{\nabla f(y_s)}\leq G$ and $\norm{x_{s+1}-y_s}=\norm{\eta \nabla f(y_s)} \leq r(G)$ for $s<t$, by \pref{lem:lsl}, we have
\begin{align*}
     \HardTerm &\leq (1- \frac{A_t}{A_{t+1}}) \frac{1}{A_{t}} \sum_{s=1}^{t-1} \eta A_{s+1}   (A_{s+1} -A_s-1) \prn*{\norm{\nabla f(x_{s+1})} + \eta\smcst \norm{  \nabla f(y_s)} }\\
     &\leq  \eta \smcst \HardTerm + (1- \frac{A_t}{A_{t+1}}) \frac{1}{A_{t}} \sum_{s=1}^{t-1} \eta A_{s+1}   (A_{s+1} -A_s-1) \norm{\nabla f(x_{s+1})} .
\end{align*}
Thus 
\begin{align*}
        &\norm{y_t - x_t} \\
        \leq& \HardTerm \leq \frac{1}{1-\eta\smcst} (1- \frac{A_t}{A_{t+1}}) \frac{1}{A_{t}} \sum_{s=1}^{t-1} \eta A_{s+1}   (A_{s+1} -A_s-1) \norm{\nabla f(x_{s+1})} \\
        \leq& \frac{1}{1-\eta\smcst} (1- \frac{A_t}{A_{t+1}}) \frac{1}{A_{t}} \sum_{s=1}^{t-1} \eta A_{s+1}   (A_{s+1} -A_s-1)  \sqrt{ 2\smcst (f(x_{s+1})-f^*)}  \tag{by (\ref{ineq:lsl})}\\
        \leq&  \frac{1}{1-\eta\smcst} (1- \frac{A_t}{A_{t+1}}) \frac{1}{A_{t}} \sum_{s=1}^{t-1} \eta A_{s+1}   (A_{s+1} -A_s-1) \sqrt{ \frac{2\smcst}{A_{s+1}}\cdot \frac{1}{\eta} ((f(x_0) - f^*) + \norm{z_0-x^*}^2) } \tag{by (\ref{ineq:control-FV})}\\
        =& \frac{2\sqrt{\eta\smcst}}{1-\eta\smcst}(1- \frac{A_t}{A_{t+1}}) \frac{1}{A_{t}} \sum_{s=1}^{t-1} \sqrt{ A_{s+1}}  (A_{s+1} -A_s-1)\sqrt{ (f(x_0) - f^*) + \norm{z_0-x^*}^2}\\
        \leq& \frac{8\sqrt{\eta}}{1-\eta\smcst}  \sqrt{\smcst( (f(x_0) - f^*) + \norm{z_0-x^*}^2)} \tag{by \pref{lem:weights}}\\
        \leq&  \frac{1}{2L^{3/2 - 1/\alpha}} \cdot L^{1/2-1/\alpha} G    =  \frac{G}{2\smcst} \leq r(G). \tag{by the choices of $\eta$ and $G$}
\end{align*}
Since $\norm{\nabla f(x_t)}\leq G$ and we just showed $\norm{x_{t}-y_t}\le r(G)$, by \pref{lem:lsl}, we have
\begin{align*}
    \norm{\nabla f(y_t)} &\leq \norm{\nabla f(x_t)} + \smcst \norm{y_t - x_t}\\
    &\leq  \sqrt{ \frac{2\smcst}{\eta A_t} ((f(x_0) - f^*) + \norm{z_0-x^*}^2) } + \smcst\cdot \frac{G}{2\smcst} \tag{by (\ref{ineq:lsl}) and (\ref{ineq:control-FV}) }\\
    &\leq  G \prn*{ \frac{1}{4} +  \frac{1}{2} } \leq G. \tag{by $A_t\geq 1/\eta$ and choice of $G$}
\end{align*}
Then we complete the induction as well as the proof.

\end{proof}

With the three lemmas above, it is straight forward to prove Theorem~\ref{thm:nesterov}.
\begin{proof}[\pfref{thm:nesterov}]
Combining Lemmas~\ref{lem:one-step},~\ref{lem:gradient-one-step},~and~\ref{lem:the-hard-one}, we know the following inequality holds for all $t\ge 0$.
\begin{align*}
    A_{t+1}(f(x_{t+1})- f^*) +  \frac{1}{2\eta} \norm{z_{t+1} - x^*}^2   \leq A_t(f(x_t)-f^*) + \frac{1}{2\eta} \norm{z_t - x^*}^2, 
\end{align*}
Then by telescoping, we directly complete the proof.

\end{proof}

\section{Analysis of NAG for strongly convex functions}
\label{app:strongly_convex_nag}

In this section, we provide the convergence analysis of the modified version of Nesterov's accelerated gradient method for $\mu$-strongly-convex functions defined in Algorithm~\ref{algo:nesterov-strongly-convex}.

\begin{algorithm}[t]
\caption{NAG for $\mu$-strongly-convex functions}
\label{algo:nesterov-strongly-convex}
\begin{algorithmic}[1]
\INPUT A $\mu$-strongly-convex and $\ell$-smooth function $f$, stepsize $\eta$, initial point $x_0$
\STATE \textbf{Initialize} $z_0=x_0$, $B_0 = 0$, and $A_{0}=1/(\eta\mu)$.
\FOR{ $t=0,...$ }
\STATE $B_{t+1} =  \frac{ 2B_t + 1+ \sqrt{ 4B_t+ 4\eta\mu B_t^2 + 1 } }{2(1-\eta\mu)} $
\STATE $A_{t+1} = B_{t+1} + \frac{1}{\eta\mu}$
\STATE $\tau_t = \frac{(A_{t+1} - A_t) (1+\eta\mu A_t) }{A_{t+1} + 2\eta\mu A_tA_{t+1} - \eta\mu A_t^2}$ and $\delta_t = \frac{A_{t+1}-A_t}{1+\eta\mu A_{t+1}}$
\STATE $y_t = x_t + \tau_t(z_t-x_t)$
\STATE $x_{t+1} = y_t - \eta \nabla f(y_t)$
\STATE $z_{t+1} = (1-\eta\mu \delta_t)z_t + \eta\mu \delta_ty_t - \eta\delta_t \nabla f(y_t) $
\ENDFOR
\end{algorithmic}
\end{algorithm}

The convergence results is formally presented in the following theorem.

\begin{theorem}
\label{thm:nesterov-strongly-convex}
Suppose $f$ is $\mu$-strongly-convex and $\ell$-smooth. 
For $\alpha \in (0,2]$, if $\ell(u) = o(u^{\alpha})$, i.e., $\lim_{u\to\infty} \ell(u)/u^\alpha=0$, then there must exist a constant $G$ such that for $\smcst := \ell(2G)$, we have
\begin{align}
    \label{ineq:starting-strongly-convex-nesterov}
    G \ge 8\max\set{L^{1/\alpha-1/2},1} \sqrt{L( (f(x_0) - f^*) + \mu\norm{z_0-x^*}^2)/\min\set{\mu,1}}.
\end{align}
If we choose 
\begin{align}
    \eta\le\min\left\{\frac{1}{144\smcst^{3-2/\alpha}\log^4\prn*{e+ \frac{144 \smcst^{3-2/\alpha}}{\mu}}},\frac{1}{2\smcst}\right\}. \label{ineq:eta-strongly-convex-nesterov}
\end{align} 
The iterates generated by Algorithm~\ref{algo:nesterov-strongly-convex} satisfy
\begin{align*}
    f(x_{T})- f^* \leq \frac{(1-\sqrt{\eta\mu})^{T-1}(f(x_0-f^*) + \mu\norm{z_0-x^*}^2)}{\eta\mu +(1-\sqrt{\eta\mu})^{T-1} }.
\end{align*}
\end{theorem}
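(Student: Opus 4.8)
The plan is to follow the blueprint of the convex NAG analysis in Appendix~\ref{app:convex_nag}, replacing the polynomially-growing weights by the geometrically-growing ones of Algorithm~\ref{algo:nesterov-strongly-convex} and carrying the strong-convexity term through the Lyapunov argument. Exactly as in the convex case, $\ell$-smoothness implies $(r,m)$-smoothness with $m(u)=\ell(u+G)$, $r(u)=G/\ell(u+G)$ (Proposition~\ref{prop:equ_ass}), so that $m(G)=\smcst$, $r(G)=G/\smcst$, and the stepsize choice \eqref{ineq:eta-strongly-convex-nesterov} in particular gives $\eta\le 1/(2\smcst)\le\min\{2/m(G),\,r(G)/(2G)\}$ — precisely what is needed to invoke Lemma~\ref{lem:convex_gd_one_step} and Lemma~\ref{lem:lsl} at any iterate whose gradient norm is at most $G$. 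The existence of a constant $G$ satisfying \eqref{ineq:starting-strongly-convex-nesterov} follows as in Theorem~\ref{thm:nesterov}: since $\ell(u)=o(u^\alpha)$ forces $\ell(2G)^{1/\alpha}=o(G)$, the right-hand side of \eqref{ineq:starting-strongly-convex-nesterov} is $o(G)$, so the inequality holds for all large $G$.

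First I would establish the one-step potential inequality. Assuming the descent condition $f(y_t)+\langle\nabla f(y_t),x_{t+1}-y_t\rangle+\frac{1}{2\eta}\|x_{t+1}-y_t\|^2\ge f(x_{t+1})$ (which follows from Lemma~\ref{lem:lsl} once $\|\nabla f(y_t)\|\le G$, just as in Lemma~\ref{lem:gradient-one-step}), and combining $\mu$-strong convexity between $x^\ast$ and $y_t$ and between $x_t$ and $y_t$ with the definitions of $\tau_t$, $\delta_t$, $A_t$, $B_t$, the standard estimate-sequence computation (the coordinate-free version of \citep{OPT-036}) yields
\[
A_{t+1}(f(x_{t+1})-f^\ast)+\tfrac{1+\eta\mu A_{t+1}}{2\eta}\|z_{t+1}-x^\ast\|^2\le A_t(f(x_t)-f^\ast)+\tfrac{1+\eta\mu A_t}{2\eta}\|z_t-x^\ast\|^2,
\]
where the algebraic identities forced by Line~3 (e.g.\ $A_{t+1}-(A_{t+1}-A_t)^2/(1+\eta\mu A_{t+1})=1/\eta$) make the cross terms and gradient-norm terms cancel exactly. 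I would also record, by induction on the $B_t$ recursion, a two-sided estimate $B_t=\Theta\big((1-\sqrt{\eta\mu})^{-t}/(\eta\mu)\big)$, hence $A_{t+1}/A_t\le(1-\sqrt{\eta\mu})^{-1}$ — the strongly-convex analogue of Lemma~\ref{lem:control-B}.

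The core is the induction showing $\|\nabla f(y_t)\|\le G$ for all $t$. Assuming $\|\nabla f(x_s)\|\le G$ for $s<t$, telescoping the potential inequality gives $f(x_{s+1})-f^\ast\le\frac{1}{\eta A_{s+1}}\Delta$ for $s<t$, where $\Delta:=(f(x_0)-f^\ast)+\mu\|x_0-x^\ast\|^2$, and Lemma~\ref{lem:reverse-PL-main} gives $\|\nabla f(x_s)\|^2\le 2\smcst(f(x_s)-f^\ast)$. Unrolling the $z$-recursion, $y_t-x_t$ is a linear combination of $\{\nabla f(y_s)\}_{s<t}$ with known weights; as in Lemma~\ref{lem:the-hard-one} I would first pass from $\|\nabla f(y_s)\|$ to $\|\nabla f(x_{s+1})\|+\eta\smcst\|\nabla f(y_s)\|$ via Lemma~\ref{lem:lsl}, absorb the self-term using $\eta\smcst\le 1/2$, and then substitute $\|\nabla f(x_{s+1})\|\le\sqrt{2\smcst(f(x_{s+1})-f^\ast)}\le\sqrt{2\smcst/(\eta A_{s+1})}\cdot\sqrt{\Delta}$. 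The resulting bound has the shape $\|y_t-x_t\|\lesssim\sqrt{\eta\smcst\,\Delta}\cdot\frac{1}{A_t}\sum_{s<t}\sqrt{A_{s+1}}\,(A_{s+1}-A_s-1)$, and the goal is to show the right-hand side is at most $G/(2\smcst)=r(G)$ under the stated $\eta$ and $G$; once this holds, $\|\nabla f(y_t)\|\le\|\nabla f(x_t)\|+\smcst\|y_t-x_t\|\le G$ by Lemma~\ref{lem:lsl}, closing the induction, and telescoping the potential over all $t$ together with $A_T=\Theta\big((1-\sqrt{\eta\mu})^{-T}/(\eta\mu)\big)$ gives the claimed $(1-\sqrt{\eta\mu})^{T-1}$ rate.

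The main obstacle — and the source of the $\log^4$ factor in \eqref{ineq:eta-strongly-convex-nesterov} that is absent from Theorem~\ref{thm:nesterov} — is bounding the weighted sum $\frac{1}{A_t}\sum_{s<t}\sqrt{A_{s+1}}(A_{s+1}-A_s-1)$. Unlike the polynomial regime, where $\frac{1}{A_t}\sum_{s<t}\sqrt{A_{s+1}}(A_{s+1}-A_s)=\Theta(A_t^{1/2})$ is tame (Lemma~\ref{lem:weights}), here $A_s$ grows geometrically and naively bounding each $A_{s+1}$ by $A_t$ blows the sum up by a factor linear in $t$. The fix is to split the sum at a window of length $\Theta\big(\tfrac{1}{\sqrt{\eta\mu}}\log(1/\eta\mu)\big)$: for the terms $s$ close to $t$ one uses $A_{s+1}\le A_t$ and $\sum(A_{s+1}-A_s)\le A_t$, giving a bound $\lesssim\sqrt{A_t}$; for the terms $s$ far from $t$ one uses the geometric decay $A_{s+1}/A_t\le(1-\sqrt{\eta\mu})^{t-s-1}$, so that $\sqrt{A_{s+1}/A_t}$ decays geometrically and the tail contributes an $\cO(1)$ multiple of $\sqrt{A_t}$. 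Carrying the window length through the scaling $\sqrt{\eta\smcst^{\,3-2/\alpha}}$ (the $3-2/\alpha$ exponent arising exactly as in Theorem~\ref{thm:nesterov} from sub-quadraticity, $\alpha=2$ recovering $\eta\lesssim\smcst^{-2}$) is what produces the $\log^4\!\big(e+\smcst^{3-2/\alpha}/\mu\big)$ requirement; I expect this splitting-and-reweighting argument, and tracking its constants against \eqref{ineq:starting-strongly-convex-nesterov}, to be the most delicate part of the proof. I would defer the routine Lyapunov algebra and the $B_t$-recursion estimates to supporting lemmas and focus the exposition on this weighted-sum bound.
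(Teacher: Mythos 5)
Your high-level architecture matches the paper's (a strongly-convex one-step potential inequality, the descent condition via Lemma~\ref{lem:lsl}, an induction on $\norm{\nabla f(y_t)}\le G$ using Lemma~\ref{lem:reverse-PL-main} and the telescoped potential, and a final telescoping with $A_T\ge (1-\sqrt{\eta\mu})^{-(T-1)}+1/(\eta\mu)$), but the core step --- controlling $\norm{y_t-x_t}$ --- has a genuine gap. You transplanted the convex-case unrolling: the weights $\frac{1}{A_t}\sqrt{A_{s+1}}(A_{s+1}-A_s-1)$ arise from Algorithm~\ref{algo:nesterov}'s update $z_{t+1}=z_t-\eta(A_{t+1}-A_t)\nabla f(y_t)$, not from Algorithm~\ref{algo:nesterov-strongly-convex}'s update $z_{t+1}=(1-\eta\mu\delta_t)z_t+\eta\mu\delta_t y_t-\eta\delta_t\nabla f(y_t)$. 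Unrolling the latter, the coefficient of $\nabla f(y_s)$ in $y_t-x_t$ is $\eta\tau_t(1-\delta_s)\prod_{i=s+1}^{t-1}(1-\eta\mu\delta_i)(1-\tau_i)$; the contraction factors telescope to at most $A_{s+1}/A_t$, and with $\tau_t\delta_s\le 1$ (Lemma~\ref{lem:coef-control-1}) the quantity one actually has to bound, after inserting $\norm{\nabla f(x_{s+1})}\lesssim\sqrt{\smcst\Delta/(\eta\mu A_{s+1})}$, is $\sum_{s<t}\sqrt{A_{s+1}}/A_t$, which is bounded \emph{uniformly in $t$} by $3+4\log(e+1/(\eta\mu))$ (Lemma~\ref{lem:coef-control-4}). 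By contrast, the quantity you set out to bound is, by your own estimate, of order $\sqrt{A_t}$: with geometric weights one has $A_{s+1}-A_s-1\asymp\sqrt{\eta\mu}\,A_{s+1}$ for large $s$, so $\frac{1}{A_t}\sum_{s<t}\sqrt{A_{s+1}}(A_{s+1}-A_s-1)\asymp\sqrt{A_t}\to\infty$. Since closing the induction requires $\norm{y_t-x_t}\le r(G)=G/(2\smcst)$ for \emph{every} $t$ with $G$ and $\eta$ fixed, a bound that grows like $\sqrt{A_t}$ can never suffice; no window length or constant tracking repairs this. You must work with the weights produced by Algorithm~\ref{algo:nesterov-strongly-convex} itself, whose $(1-\eta\mu\delta_i)(1-\tau_i)$ factors supply exactly the decay that keeps the sum bounded.

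A second, related issue is that your windowing heuristic relies on $A_{s+1}/A_t\le(1-\sqrt{\eta\mu})^{t-s-1}$ (and $A_{t+1}/A_t\le(1-\sqrt{\eta\mu})^{-1}$), which fails in the initial phase: the recursion gives $B_{t+1}\ge B_t/(1-\sqrt{\eta\mu})$ (Lemma~\ref{lem:coef-control-2}), but $A_t=B_t+1/(\eta\mu)$ is essentially flat, $A_{t+1}/A_t\approx 1$, as long as $B_t\ll 1/(\eta\mu)$, i.e.\ for the first $\Theta\bigl(\log(1/(\eta\mu))/\sqrt{\eta\mu}\bigr)$ iterations, so the ratios $A_{s+1}/A_t$ do not decay geometrically there. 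That flat phase is precisely where the paper's logarithm comes from: it splits $\sqrt{A_{s+1}}\le\sqrt{B_{s+1}}+1/\sqrt{\eta\mu}$, controls $\sum_s\sqrt{B_{s+1}}\le 3B_t\le 3A_t$ directly from the $B$-recursion (Lemma~\ref{lem:coef-control-3}), and controls the remaining term $t/(\sqrt{\eta\mu}A_t)$ via the elementary bound $t\le\frac{2}{\sqrt{p}}\log(e+\frac1p)\bigl(p(1+\sqrt p)^t+1\bigr)$ with $p=\eta\mu$ (Lemma~\ref{lem:techinical-2}) together with $A_t\ge(1+\sqrt{\eta\mu})^{t-1}+1/(\eta\mu)$. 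Your intuition that the $\log$ enters through a window of length $\Theta\bigl(\log(1/(\eta\mu))/\sqrt{\eta\mu}\bigr)$ is sound, but it has to be applied to the correct sum $\sum_{s<t}\sqrt{A_{s+1}}/A_t$, with the early flat phase handled through $B_t$ (or an equivalent device) rather than through geometric decay of $A_t$ itself.
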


\newcommand{\gd}{\mathrm{gd}}
\newcommand{\nag}{\mathrm{nag}}

The above theorem gives a gradient complexity of $\cO\left(\frac{1}{\sqrt{\eta\mu}}\log(1/\epsilon)\right)$.
Note that Theorem~\ref{thm:convex-gd} shows the complexity of GD is $\cO\left(\frac{1}{{\eta\mu}}\log(1/\epsilon)\right)$. It seems NAG gives a better rate at first glance. However, note that the choices of $G,\smcst,\eta$ in these two theorems are different, it is less clear whether NAG accelerates the optimization in this setting. Below, we informally show that, if $\ell(u)=o(\sqrt{u})$, the rate we obtain for NAG is faster than that for GD.

For simplicity, we informally assume $\ell(u) \asymp x^\rho$ with $\rho\in (0,1)$. Let $G_0 = \norm{\nabla f(x_0)}$. Then for GD, by Theorem~\ref{thm:convex-gd}, we have $\eta_{\gd}\mu \asymp \mu/\ell(G_0) \asymp \mu/G_0^\rho$. For NAG, since $\ell$ is sub-linear we can choose $\alpha=1$ in the theorem statement. Since $f$ is $\mu$-strongly-convex, by standard results, we can show that $f(x_0)-f^*\le \frac{1}{\mu} G_0^2$ and $\norm{z_0-x^*}\le \frac{1}{\mu} G_0$. Thus the requirement of $G$ in \eqref{ineq:starting-strongly-convex-nesterov} can be simplified as $G\gtrsim \ell(G)\cdot G_0/\mu$, which is satisfied if choosing $G \asymp (G_0/\mu)^{1/(1-\rho)}$. Then we also have $\eta_{\nag}\asymp \frac{1}{\ell(G)}\asymp (\mu/G_0)^{\rho/(1-\rho)}$. Thus $\sqrt{\eta_{\nag} \mu} \asymp (\mu/G_0^\rho)^{1/(2-2\rho)}$. This means whenever $1/(2-2\rho)<1$, i.e., $ 0\le \rho<1/2 $, we have $\sqrt{\eta_{\nag} \mu} \gtrsim  \eta_{\gd}\mu$, which implies the rate we obtain for NAG is faster than that for GD.  

In what follows, we will provide the proof of Theorem~\ref{thm:nesterov-strongly-convex}. We will always use the parameter choices in the theorem throughout this section.

\subsection{Useful lemmas}

In this part, we provide several useful lemmas for proving Theorem~\ref{thm:nesterov-strongly-convex}. To start with, the following two lemmas provide two useful inequalities.

\begin{lemma}
\label{lem:techinical-1}
    For any $0\le u\le 1$, we have $\log(1+u)\geq \frac{1}{2}u$.
\end{lemma}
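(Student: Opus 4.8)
The plan is to reduce this to a one-variable monotonicity argument. Define $g(u) := \log(1+u) - \tfrac12 u$ on $[0,1]$; the claim is equivalent to $g(u) \ge 0$ there. First I would observe that $g(0) = 0$, so it suffices to show $g$ is non-decreasing on $[0,1]$. Differentiating gives $g'(u) = \tfrac{1}{1+u} - \tfrac12 = \tfrac{1-u}{2(1+u)}$, which is non-negative for every $u \in [0,1]$. Hence $g$ is non-decreasing on this interval, so $g(u) \ge g(0) = 0$, i.e. $\log(1+u) \ge \tfrac12 u$, as claimed.

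An alternative and essentially equivalent route uses concavity of $u \mapsto \log(1+u)$: on the interval $[0,1]$ its graph lies above the secant line through the endpoints $(0,0)$ and $(1,\log 2)$, which gives $\log(1+u) \ge u\log 2$ for $u \in [0,1]$; since $\log 2 > \tfrac12$, the inequality follows. I would prefer the monotonicity version since it is fully self-contained and needs no numerical estimate of $\log 2$.

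There is no real obstacle here; the lemma is elementary. The only point requiring care is the restriction $u \le 1$: the derivative $g'$ vanishes at $u = 1$ and becomes negative beyond it, so the bound $\log(1+u) \ge \tfrac12 u$ genuinely fails for large $u$, and likewise the secant estimate is only valid on $[0,1]$. Keeping the argument confined to $[0,1]$ is therefore essential, but that is exactly the regime in which the lemma is stated and later used.
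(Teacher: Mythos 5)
Your argument is correct: $g(u)=\log(1+u)-\tfrac12 u$ satisfies $g(0)=0$ and $g'(u)=\tfrac{1-u}{2(1+u)}\ge 0$ on $[0,1]$, which gives the claim, and your remark that the bound fails for $u>1$ correctly identifies why the restriction matters. The paper itself states this lemma without proof (treating it as elementary), so there is nothing to compare against; your monotonicity argument (or the secant/concavity variant) is a perfectly adequate justification.
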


\begin{lemma}
    \label{lem:techinical-2}
    For all $0<p\le 1$ and $t\geq 0$, we have
    \begin{align*}
        t\leq \frac{2}{\sqrt{p}}\log (e+\frac{1}{p})(p(1+\sqrt{p})^t +1 ).
    \end{align*}
\end{lemma}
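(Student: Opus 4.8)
The plan is to reduce everything to a one-variable inequality after the substitution $q := \sqrt{p} \in (0,1]$, so that the claimed bound becomes $t \le \frac{2}{q}\log(e+q^{-2})\bigl(q^2(1+q)^t + 1\bigr)$. I would then split into two cases according to the size of $t$ relative to $T_0 := \frac{2}{q}\log(e+q^{-2})$. When $t \le T_0$ the bound is immediate, since $q^2(1+q)^t + 1 \ge 1$ forces the right-hand side to be at least $T_0 \ge t$.

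For the remaining case $t > T_0$, it suffices to prove $t \le 2q\log(e+q^{-2})(1+q)^t$, dropping the harmless $+1$. Here I would invoke Lemma~\ref{lem:techinical-1}, which gives $\log(1+q) \ge q/2$ for $q\in[0,1]$, hence $(1+q)^t \ge e^{qt/2}$ for $t\ge 0$; so it is enough to show $t\, e^{-qt/2} \le 2q\log(e+q^{-2})$. Setting $s := qt/2$, the case assumption $t > T_0$ translates to $s > \log(e+q^{-2}) \ge 1$, and the desired inequality becomes $s\,e^{-s} \le q^2\log(e+q^{-2})$.

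Finally I would use that $s \mapsto s e^{-s}$ is decreasing on $[1,\infty)$ (its derivative is $e^{-s}(1-s) \le 0$ there). Combined with $s > \log(e+q^{-2}) \ge 1$, this yields $s e^{-s} < \log(e+q^{-2})\cdot e^{-\log(e+q^{-2})} = \frac{\log(e+q^{-2})}{e+q^{-2}}$, and the argument closes because $\frac{1}{e+q^{-2}} \le q^2$ is equivalent to $1 \le eq^2 + 1$, which clearly holds. Undoing the substitution recovers the statement.

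The only subtle point — and the step I would expect to trip up a first attempt — is that the crude estimate $\max_{s\ge0} s e^{-s} = 1/e$ is far too lossy: it would demand $1/e \le q^2\log(e+q^{-2})$, which fails as $q\to 0$. The fix is to avoid bounding $s e^{-s}$ by its global maximum and instead feed the lower bound on $s$ produced by the case split into the monotonicity of $s e^{-s}$ on $[1,\infty)$; everything else is routine.
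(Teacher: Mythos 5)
Your proof is correct. It shares the paper's skeleton: the same case split at the threshold $T_0=\tfrac{2}{\sqrt p}\log(e+\tfrac1p)$, the trivial handling of $t\le T_0$, and the same use of Lemma~\ref{lem:techinical-1} to convert $(1+\sqrt p)^t$ into $e^{\sqrt p\,t/2}$. Where you diverge is the finishing step: the paper keeps the full difference $f(t)=\tfrac{2}{\sqrt p}\log(e+\tfrac1p)(p(1+\sqrt p)^t+1)-t$, differentiates it, and uses $t>T_0$ to show $f'(t)\ge p(e+\tfrac1p)-1\ge 0$, so that $f$ is nondecreasing past $T_0$ and anchored at $f(T_0)\ge 0$; you instead drop the $+1$, substitute $s=\sqrt p\,t/2$, and close with a direct pointwise estimate, namely that $s e^{-s}$ is non-increasing on $[1,\infty)$ together with the case bound $s>\log(e+q^{-2})\ge 1$, which gives $se^{-s}\le \frac{\log(e+q^{-2})}{e+q^{-2}}\le q^2\log(e+q^{-2})$. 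Your variant is slightly more elementary (no differentiation of the compound expression, no anchoring argument) at the cost of one extra change of variables; the paper's derivative argument is mechanical but requires checking both the sign of $f'$ on $(T_0,\infty)$ and the value at $T_0$. Your closing remark about why the crude bound $\max_s se^{-s}=1/e$ is too lossy is accurate and is exactly the reason both proofs must feed the threshold $t>T_0$ into the exponential term.
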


\begin{proof}[Proof of Lemma~\ref{lem:techinical-2}]
    Let 
    \begin{align*}
        f(t) = \frac{2}{\sqrt{p}}\log (e+\frac{1}{p})(p(1+\sqrt{p})^t +1 ) - t.
    \end{align*}
    It is obvious that $f(t) \geq 0$ for $t\leq \frac{2}{\sqrt{p}}\log (e+\frac{1}{p})$. For $t> \frac{2}{\sqrt{p}}\log (e+\frac{1}{p})$, we have
    \begin{align*}
        f'(t) &=  2\sqrt{p}\log (e+\frac{1}{p})\log (1+\sqrt{p}) (1+\sqrt{p})^t - 1\\
        &\geq p (1+\sqrt{p})^t - 1     \tag{by \pref{lem:techinical-1}}  \\
        &= p \exp( t\log (1+\sqrt{p}) ) - 1\\
        &\geq p \exp(t\sqrt{p}/2) -1 \tag{by \pref{lem:techinical-1}}  \\
        &\geq p(e+1/p) -1 \geq 0 .\tag{since $t> \frac{2}{\sqrt{p}}\log (e+\frac{1}{p})$}
    \end{align*}
    Thus $f$ is non-decreasing and
    \begin{align*}
        f(t) \geq f\prn*{ \frac{2}{\sqrt{p}}\log (e+\frac{1}{p})} \ge 0.
    \end{align*}
\end{proof}

In the next four lemmas, we provide several useful inequalities regarding the weights $\set{A_t}_{t\ge0}$ and 
$\set{B_t}_{t\ge0}$ used in Algorithm~\ref{algo:nesterov-strongly-convex}.

\begin{lemma}
\label{lem:coef-control-1}
    For all $s\leq t$, we have
    \begin{align*}
        \frac{B_{t+1}-B_t}{B_{t+1}} \cdot \frac{B_{s+1}-B_s}{1+\eta\mu B_{s+1}}\leq 1,
    \end{align*}
    which implies $\tau_t \cdot \delta_s \leq 1.$
\end{lemma}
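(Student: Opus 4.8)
The plan is to reduce everything to one clean quadratic identity for the weights $\{B_t\}_{t\ge 0}$ of Algorithm~\ref{algo:nesterov-strongly-convex}. Throughout this section $0<\eta\mu<1$ and $B_0=0$, and a one-line induction on the recursion shows $\{B_t\}$ is strictly increasing with $B_t>0$ for $t\ge 1$: if $B_t\ge 0$, the numerator $2B_t+1+\sqrt{4B_t+4\eta\mu B_t^2+1}$ strictly exceeds $2B_t$ and the factor $\tfrac{1}{2(1-\eta\mu)}>\tfrac12$, so $B_{t+1}>B_t$. In particular $B_{s+1}\le B_{t+1}$ whenever $s\le t$.

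First I would extract the identity. Writing the recursion as $2(1-\eta\mu)B_{t+1}-(2B_t+1)=\sqrt{4B_t+4\eta\mu B_t^2+1}$ — whose left side is nonnegative since $2(1-\eta\mu)B_{t+1}\ge 2B_t+1$ — and squaring, the terms $4B_t+1$ cancel on both sides and dividing by $4(1-\eta\mu)$ gives
\begin{align*}
(1-\eta\mu)B_{t+1}^2-(2B_t+1)B_{t+1}+B_t^2=0,
\end{align*}
equivalently $(B_{t+1}-B_t)^2=B_{t+1}(1+\eta\mu B_{t+1})$. Since $B_{t+1}-B_t>0$ and $B_{t+1}>0$, taking square roots yields $\frac{B_{t+1}-B_t}{B_{t+1}}=\sqrt{(1+\eta\mu B_{t+1})/B_{t+1}}$ and $\frac{B_{s+1}-B_s}{1+\eta\mu B_{s+1}}=\sqrt{B_{s+1}/(1+\eta\mu B_{s+1})}$, so the product $\frac{B_{t+1}-B_t}{B_{t+1}}\cdot\frac{B_{s+1}-B_s}{1+\eta\mu B_{s+1}}$ equals $\sqrt{\frac{(1+\eta\mu B_{t+1})B_{s+1}}{B_{t+1}(1+\eta\mu B_{s+1})}}$. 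Cross-multiplying, $(1+\eta\mu B_{t+1})B_{s+1}\le(1+\eta\mu B_{s+1})B_{t+1}$ is equivalent to $B_{s+1}\le B_{t+1}$ (the cross terms $\eta\mu B_{s+1}B_{t+1}$ cancel), which we have; hence the product is $\le 1$, proving the displayed inequality.

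For the consequence $\tau_t\cdot\delta_s\le 1$, I would use $A_t=B_t+\tfrac{1}{\eta\mu}$, so $A_{t+1}-A_t=B_{t+1}-B_t$ and $1+\eta\mu A_{s+1}=2+\eta\mu B_{s+1}\ge 1+\eta\mu B_{s+1}$, giving $\delta_s=\frac{B_{s+1}-B_s}{2+\eta\mu B_{s+1}}\le\frac{B_{s+1}-B_s}{1+\eta\mu B_{s+1}}$. For $\tau_t$, its denominator $D:=A_{t+1}+2\eta\mu A_tA_{t+1}-\eta\mu A_t^2\ge A_t(1+\eta\mu A_t)>0$ (using $A_{t+1}\ge A_t>0$), so $\tau_t\le\frac{B_{t+1}-B_t}{B_{t+1}}=\frac{A_{t+1}-A_t}{A_{t+1}-1/(\eta\mu)}$ is equivalent to $(1+\eta\mu A_t)\bigl(A_{t+1}-\tfrac{1}{\eta\mu}\bigr)\le D$, which after expansion reduces to $-\tfrac{1}{\eta\mu}-A_t\le\eta\mu A_t(A_{t+1}-A_t)$ — immediate since the left side is negative and the right side nonnegative. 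Multiplying the two bounds and invoking the first part gives $\tau_t\delta_s\le 1$.

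The only mildly delicate point is the squaring step that produces $(B_{t+1}-B_t)^2=B_{t+1}(1+\eta\mu B_{t+1})$; once this identity (the strongly-convex analogue of $(B_{t+1}-B_t)^2=B_{t+1}$ behind Algorithm~\ref{algo:nesterov}) is in hand, the rest is elementary algebra using the substitution $A_t=B_t+1/(\eta\mu)$ and the monotonicity $B_{s+1}\le B_{t+1}$.
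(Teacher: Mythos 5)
Your proof is correct and follows essentially the same route as the paper: both hinge on the identity $(B_{s+1}-B_s)^2=B_{s+1}(1+\eta\mu B_{s+1})$ together with monotonicity of $\{B_t\}$, and both reduce $\tau_t\delta_s$ to the $B$-quantity via $A_t=B_t+1/(\eta\mu)$ (the paper bounds $\tau_t\le\frac{A_{t+1}-A_t}{A_{t+1}}$ and then uses $A_{t+1}\ge B_{t+1}$, while you cross-multiply directly against $\frac{B_{t+1}-B_t}{B_{t+1}}$ — a cosmetic difference). Your version is slightly more self-contained in that you actually derive the quadratic identity and the monotonicity of $B_t$, which the paper only asserts.
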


\begin{proof}[Proof of Lemma~\ref{lem:coef-control-1}]
   By Algorithm~\ref{algo:nesterov-strongly-convex}, it is easy to verify
    \begin{align*}
        (B_{s+1}-B_s)^2 = B_{s+1}(1+\eta\mu B_{s+1}).
    \end{align*}
    This implies 
    \begin{align*}
        B_s = B_{s+1} - \sqrt{ B_{s+1}(1+\eta\mu B_{s+1})}.
    \end{align*}
    Thus
    \begin{align*}
        \frac{B_t}{B_{t+1}} = 1-\sqrt{\eta\mu + \frac{1}{B_{t+1}}} \geq 1-\sqrt{\eta\mu + \frac{1}{B_{s+1}}} = \frac{B_s}{B_{s+1}}, 
    \end{align*}
    where in the inequality, we use the fact that $B_s$ is non-decreasing with $s$. Therefore 
    \begin{align*}
        \frac{B_{t+1}-B_t}{B_{t+1}} \cdot \frac{B_{s+1}-B_s}{1+\eta\mu B_{s+1}}\leq \frac{B_{s+1}-B_s}{B_{s+1}} \cdot \frac{B_{s+1}-B_s}{1+\eta\mu B_{s+1}}=1.
    \end{align*}
    Thus we have
    \begin{align*}
        \tau_t \cdot \delta_s &= \frac{(A_{t+1} - A_t) (1+\eta\mu A_t) }{A_{t+1} + 2\eta\mu A_tA_{t+1} - \eta\mu A_t^2} \cdot \frac{A_{s+1}-A_s}{1+\eta\mu A_{s+1}}\\
        &\leq \frac{A_{t+1} - A_t}{A_{t+1}} \cdot \frac{A_{s+1}-A_s}{1+\eta\mu A_{s+1}}  \tag{by $A_{t+1}\geq A_t$} \\
        &=  \frac{B_{t+1} - B_t}{A_{t+1}} \cdot \frac{B_{s+1}-B_s}{1+\eta\mu A_{s+1}}  \tag{by $A_{s+1} - A_s = B_{s+1} - B_s$} \\
        &\leq         \frac{B_{t+1}-B_t}{B_{t+1}} \cdot \frac{B_{s+1}-B_s}{1+\eta\mu B_{s+1}}\leq 1. \tag{by $A_{s+1}\geq B_{s+1}$}
    \end{align*}

\end{proof}

\begin{lemma}
    \label{lem:coef-control-2}
    If $0<\eta\mu<1$, then for any $t\geq 1$, we have
    \begin{align*}
        \frac{B_t}{1-\sqrt{\eta\mu}} \leq B_{t+1} \leq \frac{3B_t}{1-\eta\mu}.
    \end{align*}
    Thus 
    \begin{align*}
        B_t\geq  \frac{1}{(1-\sqrt{\eta\mu})^{t-1}}  \geq (1+\sqrt{\eta\mu})^{t-1} .
    \end{align*}
\end{lemma}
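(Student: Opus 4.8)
The plan is to work directly from the recursion $B_{t+1} = \frac{2B_t + 1 + \sqrt{4B_t + 4\eta\mu B_t^2 + 1}}{2(1-\eta\mu)}$ in Algorithm~\ref{algo:nesterov-strongly-convex}, together with the algebraic identity $(B_{t+1}-B_t)^2 = B_{t+1}(1+\eta\mu B_{t+1})$ already established in the proof of Lemma~\ref{lem:coef-control-1}. First I would record two elementary facts. (i) From the recursion, $B_0 = 0$ and $B_1 = \frac{1}{1-\eta\mu} \ge 1$. (ii) The sequence $\set{B_t}$ is strictly increasing: writing $B_{t+1}-B_t = \frac{1 + 2\eta\mu B_t + \sqrt{4B_t + 4\eta\mu B_t^2 + 1}}{2(1-\eta\mu)} > 0$ using $0 < \eta\mu < 1$. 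In particular $B_t \ge B_1 \ge 1$ for every $t \ge 1$, which is what will make the crude estimates below go through.

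For the lower bound, I would use the identity of Lemma~\ref{lem:coef-control-1}. Since $B_t$ is increasing, $B_{t+1}-B_t = \sqrt{B_{t+1}(1+\eta\mu B_{t+1})} = B_{t+1}\sqrt{\eta\mu + 1/B_{t+1}}$, so $\frac{B_t}{B_{t+1}} = 1 - \sqrt{\eta\mu + 1/B_{t+1}} < 1 - \sqrt{\eta\mu}$ because $B_{t+1} > 0$; rearranging gives $B_{t+1} \ge \frac{B_t}{1-\sqrt{\eta\mu}}$. (This is essentially the monotonicity computation already performed inside Lemma~\ref{lem:coef-control-1}.) For the upper bound, I would substitute $B_t \ge 1$ and $\eta\mu < 1$ into the recursion crudely: $2B_t + 1 \le 3B_t$, and $4B_t + 4\eta\mu B_t^2 + 1 \le 4B_t^2 + 4B_t^2 + B_t^2 = 9B_t^2$, hence $\sqrt{4B_t + 4\eta\mu B_t^2 + 1} \le 3B_t$; therefore $B_{t+1} \le \frac{3B_t + 3B_t}{2(1-\eta\mu)} = \frac{3B_t}{1-\eta\mu}$.

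Finally, for the ``thus'' conclusion I would iterate the lower bound starting at $t=1$ (not $t=0$, where $B_0 = 0$ makes the ratio meaningless): $B_t \ge \frac{B_1}{(1-\sqrt{\eta\mu})^{t-1}} \ge \frac{1}{(1-\sqrt{\eta\mu})^{t-1}}$ since $B_1 \ge 1$, and then the elementary bound $\frac{1}{1-x} \ge 1+x$ for $x = \sqrt{\eta\mu} \in [0,1)$ (equivalently $1-x^2 \le 1$) yields $B_t \ge (1+\sqrt{\eta\mu})^{t-1}$. There is no real obstacle in this proof; the only two points that need care are (a) taking the square root of the identity with the correct sign, which is justified by the monotonicity of $\set{B_t}$, and (b) anchoring the iteration of the lower bound at $t=1$ so that the base value $B_1$ is strictly positive and at least $1$.
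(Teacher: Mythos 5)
Your proof is correct and follows essentially the same route as the paper: derive the two-sided per-step bounds $\frac{B_t}{1-\sqrt{\eta\mu}}\le B_{t+1}\le\frac{3B_t}{1-\eta\mu}$ directly from the recursion (the paper lower-bounds the radical by $2B_t\sqrt{\eta\mu}$, whereas you route the same algebra through the identity $(B_{t+1}-B_t)^2=B_{t+1}(1+\eta\mu B_{t+1})$ from Lemma~\ref{lem:coef-control-1}), and then iterate from $B_1\ge 1$ using $\frac{1}{1-x}\ge 1+x$. The only substantive difference is that you explicitly justify $B_t\ge 1$ for $t\ge 1$ via monotonicity of the sequence, a point the paper asserts without proof, which is a small improvement in rigor rather than a different argument.
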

\begin{proof}[Proof of Lemma~\ref{lem:coef-control-2}]
For $t\geq 1$, we have $B_t\geq 1$ thus
    \begin{align*}
        B_{t+1} = \frac{2B_t + 1 + \sqrt{4B_t + 4\eta\mu B_t^2 +1}}{2(1-\eta\mu)} \leq \frac{2B_t + 1}{1-\eta\mu} \leq \frac{3B_t}{1-\mu\eta}.
    \end{align*}
    On the other hand, we have
    \begin{align*}
        B_{t+1} &= \frac{2B_t + 1 + \sqrt{4B_t + 4\eta\mu B_t^2 +1}}{2(1-\eta\mu)} \\
        &\geq \frac{2B_t + \sqrt{(2B_t\sqrt{\eta\mu})^2}}{2(1-\eta\mu)} \\
        &= \frac{B_t}{1-\sqrt{\eta\mu}}.
    \end{align*}
    Thus 
    \begin{align*}
        B_t \geq \prn*{\frac{1}{1-\sqrt{\eta\mu}}}^{t-1} B_1 \geq \prn*{\frac{1}{1-\sqrt{\eta\mu}}}^{t-1} \geq (1+\sqrt{\eta\mu})^{t-1} .
    \end{align*}
    
\end{proof}

\begin{lemma}
    \label{lem:coef-control-3}
    For $0<\eta\mu<1$ and $t\geq 1$, we have
    \begin{align*}
        \sum_{s=0}^{t} \sqrt{B_s} \leq  ( 1-\eta\mu)B_{t+1} \leq 3B_t.
    \end{align*}
\end{lemma}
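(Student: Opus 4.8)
The plan is to reduce everything to the closed-form increment relation for the weights $\{B_t\}$. The first step is to recall the identity $(B_{s+1}-B_s)^2 = B_{s+1}(1+\eta\mu B_{s+1})$, which was derived in the proof of \pref{lem:coef-control-1} directly from the update rule of \pref{algo:nesterov-strongly-convex}. Since the increments $B_{s+1}-B_s=\sqrt{B_{s+1}(1+\eta\mu B_{s+1})}$ are nonnegative and $\eta\mu>0$, this yields the key pointwise bound $\sqrt{B_{s+1}}\le B_{s+1}-B_s$.

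With this in hand, the left-hand inequality $\sum_{s=0}^{t}\sqrt{B_s}\le(1-\eta\mu)B_{t+1}$ follows by a one-line telescoping argument, with no induction needed. Using $B_0=0$ to drop the $s=0$ term and reindex, I would write $\sum_{s=0}^{t}\sqrt{B_s}=\sum_{s=0}^{t-1}\sqrt{B_{s+1}}\le\sum_{s=0}^{t-1}(B_{s+1}-B_s)=B_t$, so it remains to verify $B_t\le(1-\eta\mu)B_{t+1}$. But the update rule gives $2(1-\eta\mu)B_{t+1}=2B_t+1+\sqrt{4B_t+4\eta\mu B_t^2+1}\ge 2B_t+2$, hence $(1-\eta\mu)B_{t+1}\ge B_t+1\ge B_t$, which closes this part. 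For the right-hand inequality $(1-\eta\mu)B_{t+1}\le 3B_t$, I would simply invoke the upper bound $B_{t+1}\le 3B_t/(1-\eta\mu)$ from \pref{lem:coef-control-2}, valid for all $t\ge 1$, and rearrange.

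There is no real obstacle; the only points requiring care are the reindexing step (which uses $B_0=0$ so that the $s=0$ summand vanishes) and confirming that the increments $B_{s+1}-B_s$ are genuinely nonnegative, so that $(B_{s+1}-B_s)^2\ge B_{s+1}$ upgrades to $B_{s+1}-B_s\ge\sqrt{B_{s+1}}$ rather than only $|B_{s+1}-B_s|\ge\sqrt{B_{s+1}}$. Both are immediate from the explicit formula for the increment, so this lemma is essentially a corollary of \pref{lem:coef-control-1} and \pref{lem:coef-control-2}.
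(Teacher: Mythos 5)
Your proof is correct. It is the same telescoping idea as the paper's, but routed slightly differently: the paper bounds the square root in the update rule directly to get the per-step estimate $B_{t+1}\ge B_t+\frac{\sqrt{B_t}}{1-\eta\mu}$ and unrolls the recursion, which yields $\sum_{s=0}^{t}\sqrt{B_s}\le(1-\eta\mu)B_{t+1}$ in one pass; you instead reuse the exact identity $(B_{s+1}-B_s)^2=B_{s+1}(1+\eta\mu B_{s+1})$ from the proof of \pref{lem:coef-control-1} to get the per-step bound $B_{s+1}-B_s\ge\sqrt{B_{s+1}}$, telescope to the intermediate inequality $\sum_{s=0}^{t}\sqrt{B_s}\le B_t$ (using $B_0=0$), and then separately verify $B_t\le(1-\eta\mu)B_{t+1}$ from the update rule. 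All three steps check out: the increments are indeed positive (clear from the explicit formula, and consistent with the monotonicity already used in \pref{lem:coef-control-1}), the reindexing is legitimate since $\sqrt{B_0}=0$, and the final bound $(1-\eta\mu)B_{t+1}\le 3B_t$ is exactly the paper's appeal to \pref{lem:coef-control-2}. Your chain $\sum_{s=0}^{t}\sqrt{B_s}\le B_t\le(1-\eta\mu)B_{t+1}$ is marginally sharper than what the lemma states, and arguably cleaner in that it factors the argument through results already established, but the two proofs buy the same conclusion with comparable effort.
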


\begin{proof}[Proof of Lemma~\ref{lem:coef-control-3}]
    \begin{align*}
        B_{t+1} &= \frac{2B_t + 1 + \sqrt{4B_t + 4\eta\mu B_t^2 +1}}{2(1-\eta\mu)} \\
        &\geq B_t + \frac{\sqrt{B_t}}{1-\eta\mu} \\
        &\geq \cdots\\
        &\geq \sum_{s=0}^{t} \frac{\sqrt{B_s}}{1-\eta\mu}.
    \end{align*}
    Combined with \pref{lem:coef-control-2}, we have the desired result.
\end{proof}

\begin{lemma}
\label{lem:coef-control-4}
    For $t\geq 1$, we have
    \begin{align*}
        \sum\limits_{s=0}^{t-1} \frac{\sqrt{A_{s+1}}}{A_t} \leq  3 + 4 \log (e+\frac{1}{\eta\mu}).
    \end{align*}
\end{lemma}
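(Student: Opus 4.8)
The plan is to split the summand according to the two pieces of $A_{s+1}=B_{s+1}+\tfrac{1}{\eta\mu}$. Using $\sqrt{a+b}\le\sqrt a+\sqrt b$ for $a,b\ge0$ we have $\sqrt{A_{s+1}}\le\sqrt{B_{s+1}}+\tfrac{1}{\sqrt{\eta\mu}}$, so after reindexing $s+1\mapsto s$,
\begin{align*}
\sum_{s=0}^{t-1}\frac{\sqrt{A_{s+1}}}{A_t}\;\le\;\frac{1}{A_t}\sum_{s=1}^{t}\sqrt{B_s}\;+\;\frac{t}{\sqrt{\eta\mu}\,A_t}.
\end{align*}
It then suffices to bound the first term by an absolute constant and the second by $O\bigl(\log(e+\tfrac{1}{\eta\mu})\bigr)$.

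For the first term I would invoke \pref{lem:coef-control-3}, which gives $\sum_{s=1}^{t}\sqrt{B_s}\le\sum_{s=0}^{t}\sqrt{B_s}\le(1-\eta\mu)B_{t+1}\le 3B_t$; since $A_t=B_t+\tfrac{1}{\eta\mu}\ge B_t$, this term is at most $3$. (If one needs to shave the constant, one can instead expand $(1-\eta\mu)B_{t+1}=B_t+\tfrac12+\tfrac12\sqrt{4B_t+4\eta\mu B_t^2+1}$ and use $B_t\ge1$ for $t\ge1$ together with $\eta\mu$ small.)

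The second term is the crux. Here I would apply \pref{lem:techinical-2} with $p=\eta\mu$, which exactly packages the tradeoff between the regime where $t$ is small (so $t/\sqrt{\eta\mu}$ is mild relative to $A_t\ge1/(\eta\mu)$) and the regime where $t$ is large (so $A_t\ge B_t\ge(1+\sqrt{\eta\mu})^{t-1}$ grows geometrically by \pref{lem:coef-control-2}): it yields $\tfrac{t}{\sqrt{\eta\mu}}\le 2\log(e+\tfrac{1}{\eta\mu})\bigl((1+\sqrt{\eta\mu})^t+\tfrac{1}{\eta\mu}\bigr)$. Dividing by $A_t$ and using $A_t\ge\tfrac{1}{\eta\mu}$ (hence $\tfrac{1}{\eta\mu A_t}\le1$) and $A_t\ge B_t\ge(1+\sqrt{\eta\mu})^{t-1}$ (hence $\tfrac{(1+\sqrt{\eta\mu})^t}{A_t}\le 1+\sqrt{\eta\mu}$) gives $\tfrac{t}{\sqrt{\eta\mu}\,A_t}\le 2(2+\sqrt{\eta\mu})\log(e+\tfrac{1}{\eta\mu})$. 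Combining the two terms produces a bound of the form $3+O\bigl(\log(e+\tfrac{1}{\eta\mu})\bigr)$, and a careful tracking of the constants — using that $\eta\mu$ is small under the stepsize choice \eqref{ineq:eta-strongly-convex-nesterov} — gives the claimed $3+4\log(e+\tfrac{1}{\eta\mu})$.

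The main obstacle is precisely the second term: one must identify and use the correct ``polynomial-beats-exponential'' estimate (\pref{lem:techinical-2}, itself resting on $\log(1+u)\ge u/2$ from \pref{lem:techinical-1}) and then carefully match its two regimes against the two available lower bounds $A_t\ge B_t$ and $A_t\ge1/(\eta\mu)$ while keeping the numerical constants tight. Once \pref{lem:coef-control-2} and \pref{lem:coef-control-3} are available, the remaining manipulations are routine.
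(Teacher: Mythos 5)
Your proposal follows the paper's proof essentially line by line: the same split $\sqrt{A_{s+1}}\le\sqrt{B_{s+1}}+1/\sqrt{\eta\mu}$, the same bound $\frac{1}{A_t}\sum_{s=1}^{t}\sqrt{B_s}\le 3$ via Lemma~\ref{lem:coef-control-3} and $A_t\ge B_t$, and the same use of Lemma~\ref{lem:techinical-2} with $p=\eta\mu$ plus the lower bound on $A_t$ from Lemma~\ref{lem:coef-control-2}. The one step that does not deliver the stated inequality is your last one: by bounding $\frac{(1+\sqrt{\eta\mu})^{t}}{A_t}\le 1+\sqrt{\eta\mu}$ and $\frac{1}{\eta\mu A_t}\le 1$ \emph{separately}, you end up with the coefficient $2(2+\sqrt{\eta\mu})=4+2\sqrt{\eta\mu}$ in front of $\log\bigl(e+\frac{1}{\eta\mu}\bigr)$, which strictly exceeds $4$ for every $\eta\mu>0$. "Careful tracking of constants'' cannot repair this within your decomposition, and appealing to the stepsize choice \eqref{ineq:eta-strongly-convex-nesterov} does not help either: it only guarantees something like $\eta\mu\le 1/2$, not $\eta\mu\to 0$, and in any case the lemma is stated for all $0<\eta\mu<1$ without reference to the stepsize, so its proof should not rely on that condition.

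The fix is exactly what the paper does: use the two parts of the lower bound $A_t\ge(1+\sqrt{\eta\mu})^{t-1}+\frac{1}{\eta\mu}$ (Lemma~\ref{lem:coef-control-2}) \emph{jointly} rather than one at a time. From Lemma~\ref{lem:techinical-2},
\begin{align*}
\frac{t}{\sqrt{\eta\mu}\,A_t}
&\le 2\log\Bigl(e+\frac{1}{\eta\mu}\Bigr)\cdot\frac{\eta\mu(1+\sqrt{\eta\mu})^{t}+1}{\eta\mu A_t}
\le 2\log\Bigl(e+\frac{1}{\eta\mu}\Bigr)\cdot\frac{(1+\sqrt{\eta\mu})\bigl(\eta\mu(1+\sqrt{\eta\mu})^{t-1}+1\bigr)}{\eta\mu(1+\sqrt{\eta\mu})^{t-1}+1}\\
&= 2(1+\sqrt{\eta\mu})\log\Bigl(e+\frac{1}{\eta\mu}\Bigr)\le 4\log\Bigl(e+\frac{1}{\eta\mu}\Bigr),
\end{align*}
where the last step uses $\eta\mu\le 1$. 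Added to your first-term bound of $3$, this gives exactly the claimed $3+4\log\bigl(e+\frac{1}{\eta\mu}\bigr)$. (One could also try to recover the constant by trading off the slack in the first term, which is well below $3$ precisely when $\frac{1}{\eta\mu A_t}$ is close to $1$, but the joint bound above is the clean route and is the paper's.)
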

\begin{proof}[Proof of Lemma~\ref{lem:coef-control-4}]
    By Lemma~\ref{lem:coef-control-2}, we have
    \begin{align}
    \label{ineq:A_t-lower-bound}
        A_t = B_t + \frac{1}{\eta\mu} \geq (1+\sqrt{\eta\mu})^{t-1} + \frac{1}{\eta\mu}.
    \end{align}
    Thus, we have
    \begin{align*}
            \sum\limits_{s=0}^{t-1} \frac{\sqrt{A_{s+1}}}{A_t} &=\sum\limits_{s=0}^{t-1} \frac{\sqrt{B_{s+1} + 1/(\eta\mu)}}{A_t} \\
            &\leq \sum\limits_{s=0}^{t-1} \frac{\sqrt{B_{s+1}}}{A_t} +  \frac{t}{\sqrt{\eta\mu}A_t} \\
            &\leq 3 + \frac{1}{\sqrt{\eta\mu}A_t} \cdot \frac{2}{\sqrt{\eta\mu}} \log (e+\frac{1}{\eta\mu}) ( \eta\mu (1+\sqrt{\eta\mu})^t +1  ) \tag{by \pref{lem:coef-control-3} and \pref{lem:techinical-2}}\\
            &\leq 3 + 4 \log (e+\frac{1}{\eta\mu}). \tag{by Inequality (\ref{ineq:A_t-lower-bound})}
    \end{align*}
\end{proof}

\subsection{Proof of Theorem~\ref{thm:nesterov-strongly-convex}}

With all the useful lemmas in the previous section, we proceed to prove Theorem~\ref{thm:nesterov-strongly-convex}, for which we need several additional lemmas. First, similar to Lemma~\ref{lem:one-step}, the following lemma summarizes the results in the classical potential function analysis of NAG for strongly convex functions in \citep{OPT-036}. 
\begin{lemma}
\label{lem:one-step-strongly-convex}
For any $t\geq 0$, if the following inequality holds
\begin{align*}
    f(y_t) + \inner{\nabla f(y_t), x_{t+1}-y_t} + \frac{1}{2\eta} \norm{x_{t+1}-y_t}^2 \geq f(x_{t+1}),
\end{align*}
then we can obtain
\begin{align*}
    A_{t+1}(f(x_{t+1})- f^*) +  \frac{1 + \eta\mu A_{t+1}}{2\eta} \norm{z_{t+1} - x^*}^2 \leq A_t(f(x_t)-f^*) + \frac{1 + \eta\mu A_t}{2\eta} \norm{z_{t} - x^*}^2.
\end{align*}
\end{lemma}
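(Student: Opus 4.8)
The plan is to follow the standard Lyapunov (potential-function) argument for NAG, exactly as in \citep{OPT-036}, the only difference being that the inequality assumed in the hypothesis is used in place of the smoothness-based descent lemma; in particular no smoothness assumption on $f$ enters this lemma (the equivalent of $\eta\le 1/L$ is already baked into the hypothesis). Tacitly assuming all iterates lie in $\cX$ so that convexity applies, define the potential
\[
\Phi_t := A_t(f(x_t)-f^*) + \frac{1+\eta\mu A_t}{2\eta}\norm{z_t-x^*}^2,
\]
so that the claim is precisely $\Phi_{t+1}\le\Phi_t$. Write $g_t:=\nabla f(y_t)$ and recall from Algorithm~\ref{algo:nesterov-strongly-convex} that $x_{t+1}-y_t=-\eta g_t$, so the assumed inequality reads $f(x_{t+1})\le f(y_t)-\frac{\eta}{2}\norm{g_t}^2$.

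First I would record three elementary inequalities: the $\mu$-strong-convexity inequality of $f$ at $y_t$ against $x^*$, $f^*\ge f(y_t)+\innerp{g_t}{x^*-y_t}+\frac{\mu}{2}\norm{x^*-y_t}^2$; the convexity inequality of $f$ at $y_t$ against $x_t$, $f(x_t)\ge f(y_t)+\innerp{g_t}{x_t-y_t}$; and the assumed descent inequality $f(y_t)-\frac{\eta}{2}\norm{g_t}^2\ge f(x_{t+1})$. Taking the combination (first)$\,\times(A_{t+1}-A_t)$ $+$ (second)$\,\times A_t$ $+$ (third)$\,\times A_{t+1}$ and using that the weights of $f(y_t)$ sum to zero, the $f(y_t)$ terms cancel and one obtains
\[
A_{t+1}(f(x_{t+1})-f^*) \le A_t(f(x_t)-f^*) + \innerp{g_t}{A_{t+1}y_t-A_tx_t-(A_{t+1}-A_t)x^*} - \frac{\mu(A_{t+1}-A_t)}{2}\norm{x^*-y_t}^2 - \frac{\eta A_{t+1}}{2}\norm{g_t}^2.
\]

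The remaining work is to show that the right-hand side minus $A_t(f(x_t)-f^*)$ is bounded by $\frac{1+\eta\mu A_t}{2\eta}\norm{z_t-x^*}^2-\frac{1+\eta\mu A_{t+1}}{2\eta}\norm{z_{t+1}-x^*}^2$, which yields $\Phi_{t+1}\le\Phi_t$. For this I would expand $\norm{z_{t+1}-x^*}^2$ using the update $z_{t+1}-x^*=(1-\eta\mu\delta_t)(z_t-x^*)+\eta\mu\delta_t(y_t-x^*)-\eta\delta_t g_t$ (noting the first two coefficients sum to one), then eliminate $y_t$ via $y_t-x_t=\tau_t(z_t-x_t)$ so that $A_{t+1}y_t-A_tx_t-(A_{t+1}-A_t)x^*$ and all cross terms become linear combinations of $z_t-x^*$ and $g_t$. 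Matching coefficients, the desired inequality reduces to the algebraic identities $A_{t+1}-A_t=B_{t+1}-B_t$, $1+\eta\mu B_{t+1}=\eta\mu A_{t+1}$, and $(B_{t+1}-B_t)^2=B_{t+1}(1+\eta\mu B_{t+1})$ — equivalently $A_{t+1}+(A_{t+1}-A_t)^2=\eta\mu A_{t+1}^2$ — together with the defining formulas for $\tau_t$ and $\delta_t$; these are precisely engineered so that the coefficients of $\norm{g_t}^2$, $\innerp{g_t}{z_t-x^*}$, $\norm{z_t-x^*}^2$, and $\norm{y_t-x^*}^2$ all balance.

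The main obstacle is purely the bookkeeping in this last step: one must track four families of terms ($\norm{g_t}^2$, the inner products with $g_t$, and the two quadratic forms in $z_t-x^*$ and $y_t-x^*$) and verify that each cancels under the stated definitions — in particular that the somewhat opaque formula for $\tau_t$ is exactly what makes the $\innerp{g_t}{z_t-x^*}$ terms vanish after substituting $y_t=x_t+\tau_t(z_t-x_t)$, and that the $B$-recursion produces the identity $A_{t+1}+(A_{t+1}-A_t)^2=\eta\mu A_{t+1}^2$ that disposes of the $\norm{g_t}^2$ coefficient. There is no conceptual difficulty here: the computation is identical to the one carried out in \citep{OPT-036} for standard smooth strongly convex functions, because our hypothesis supplies the single inequality where smoothness would otherwise have been invoked.
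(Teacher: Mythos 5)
Your proposal follows essentially the same route as the paper's proof: the same three inequalities (strong convexity of $f$ at $y_t$ against $x^*$, convexity at $y_t$ against $x_t$, and the assumed descent inequality) combined with exactly the weights $A_{t+1}-A_t$, $A_t$, $A_{t+1}$, followed by expanding the $z_{t+1}$-update and invoking the defining relations of $A_t$, $B_t$, $\tau_t$, $\delta_t$. The only (harmless) imprecision is the claim that the coefficients ``all balance'': in the paper's reorganization the leftover $\norm{\nabla f(y_t)}^2$ and $\norm{x_t-z_t}^2$ terms do not cancel exactly but come out non-positive --- indeed the identity $A_{t+1}+(A_{t+1}-A_t)^2=\eta\mu A_{t+1}^2$ you cite makes the $\norm{\nabla f(y_t)}^2$ coefficient proportional to $-2A_{t+1}\le 0$ --- and are simply discarded, which is all the lemma needs.
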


\begin{proof}[\pfref{lem:one-step-strongly-convex}]
These derivations can be found in \cite{OPT-036}. We present it here for completeness.

The strong convexity between $x^*$ and $y_t$ gives
\begin{align*}
    f^* \geq f(y_t) + \inner{\nabla f(y_t), x^* -y_t} + \frac{\mu}{2} \norm{x^* - y_t}^2.
\end{align*}
The convexity between $x_t$ and $y_t$ gives 
\begin{align*}
    f(x_t) \geq f(y_t) + \inner{\nabla f(y_t), x_t-y_t}.
\end{align*}
Combining the above two inequalities and the one assumed in this lemma, we have
\begin{align*}
    0&\geq (A_{t+1} - A_t)(    f^*  -  f(y_t) - \inner{\nabla f(y_t), x^* -y_t} - \frac{\mu}{2} \norm{x^* - y_t}^2   ) \\
    &\quad + A_t(  f(y_t) - f(x_t) - \inner{\nabla f(y_t), x_t-y_t}  ) \\
    &\quad + A_{t+1} ( f(x_{t+1}) -f(y_t) - \inner{\nabla f(y_t), x_{t+1}-y_t} - \frac{1}{2\eta} \norm{x_{t+1}-y_t}^2   ).
\end{align*}
Reorganizing we can obtain
\begin{align*}
    &A_{t+1}(f(x_{t+1})- f^*) +  \frac{1 + \eta\mu A_{t+1}}{2\eta} \norm{z_{t+1} - x^*}^2 \\
    &\leq A_t(f(x_t)-f^*) + \frac{1 + \eta\mu A_t}{2\eta} \norm{z_{t} - x^*}^2 \\
    &\quad + \frac{(A_t-A_{t+1})^2 - A_{t+1} - \eta\mu A_{t+1}^2 }{1+\eta\mu A_{t+1}} \frac{\eta}{2} \norm{\nabla f(y_t)}^2\\
    &\quad- A_t^2 \frac{(A_{t+1}-A_t)(1+\eta\mu A_t)(1+\eta\mu A_{t+1})}{(A_{t+1}+2\eta\mu A_tA_{t+1} - \eta\mu A_t^2)^2}\frac{\mu}{2} \norm{x_t-z_t}^2. 
\end{align*}
Then we complete the proof noting that
\begin{align*}
    &(A_t-A_{t+1})^2 - A_{t+1} - \eta\mu A_{t+1}^2 \\
    &= (B_t-B_{t+1})^2 - B_{t+1} + \frac{1}{\eta\mu} - \eta\mu (B_{t+1} + 1/(\eta\mu) )^2 \\
    &=\eta\mu B_{t+1}^2 + \frac{1}{\eta\mu} - \eta\mu B_{t+1}^2 - 2B_{t+1} -  \frac{1}{\eta\mu} \\
    &= -2B_{t+1} \leq 0.
\end{align*}
\end{proof}

Next, note that Lemma~\ref{lem:gradient-one-step} still holds in the strongly convex setting. We repeat it below for completeness.

\begin{lemma}
\label{lem:gradient-one-step-strongly-convex}
For any $t\geq 0$, if $\norm{\nabla f(y_t)} \leq G$, then 
    we have $\norm{\nabla f(x_{t+1})} \leq G$, and furthermore,
\begin{align*}
    f(y_t) + \inner{\nabla f(y_t), x_{t+1}-y_t} + \frac{1}{2\eta} \norm{x_{t+1}-y_t}^2 \geq f(x_{t+1}).
\end{align*}
\end{lemma}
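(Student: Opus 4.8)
The plan is to observe that this lemma is the strongly-convex counterpart of Lemma~\ref{lem:gradient-one-step}, and its proof goes through verbatim: a $\mu$-strongly-convex function is in particular convex, so every ingredient used in the convex case remains available. The only thing that really needs checking is that the stepsize constraint~\eqref{ineq:eta-strongly-convex-nesterov} in Theorem~\ref{thm:nesterov-strongly-convex} is at least as strong as the bound $\eta\le 1/(2\smcst)$ required to invoke Lemmas~\ref{lem:convex_gd_one_step} and~\ref{lem:lsl}; this holds because of the explicit $\min\set{\cdot,\,1/(2\smcst)}$ term in~\eqref{ineq:eta-strongly-convex-nesterov}.

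First I would record the smoothness bookkeeping. Since $f$ is $\ell$-smooth, Proposition~\ref{prop:equ_ass} with $a=G$ shows $f$ is also $(r,m)$-smooth with $m(u)=\ell(u+G)$ and $r(u)=G/m(u)$, so that $m(G)=\ell(2G)=\smcst$ and $r(G)=G/\smcst$. Hence $\eta\le 1/(2\smcst)=\min\set{2/m(G),\,r(G)/(2G)}$, exactly the hypothesis needed below.

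Next I would prove the first claim. The update $x_{t+1}=y_t-\eta\nabla f(y_t)$ is a gradient-descent step started from $y_t$, and by assumption $\norm{\nabla f(y_t)}\le G$. Applying Lemma~\ref{lem:convex_gd_one_step} (valid because $f$ is convex and $(r,m)$-smooth, and $\eta$ satisfies the bound just recorded) with $x=y_t$ yields $x_{t+1}\in\cX$ and $\norm{\nabla f(x_{t+1})}\le\norm{\nabla f(y_t)}\le G$.

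Finally I would prove the descent inequality. The step length satisfies $\norm{x_{t+1}-y_t}=\eta\norm{\nabla f(y_t)}\le G/(2\smcst)\le r(G)$, so $y_t$ and $x_{t+1}$ both lie in $\cB(y_t,r(G))$; applying Lemma~\ref{lem:lsl} with the gradient bound $\norm{\nabla f(y_t)}\le G$, and choosing $x=x_2=y_t$, $x_1=x_{t+1}$, gives $f(x_{t+1})\le f(y_t)+\inner{\nabla f(y_t),x_{t+1}-y_t}+\frac{\smcst}{2}\norm{x_{t+1}-y_t}^2$. Since $\eta\le 1/(2\smcst)\le 1/\smcst$, we have $\smcst/2\le 1/(2\eta)$, and replacing the coefficient gives the claimed inequality. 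I do not anticipate any real obstacle; the proof is essentially a transcription of that of Lemma~\ref{lem:gradient-one-step}, the only subtlety being the verification that~\eqref{ineq:eta-strongly-convex-nesterov} implies $\eta\le 1/(2\smcst)$.
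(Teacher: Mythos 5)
Your proposal is correct and matches the paper's treatment: the paper simply notes that Lemma~\ref{lem:gradient-one-step} carries over verbatim since strong convexity implies convexity, which is exactly the argument you spell out (Proposition~\ref{prop:equ_ass} with $a=G$, Lemma~\ref{lem:convex_gd_one_step} for the gradient bound, Lemma~\ref{lem:lsl} for the descent inequality). Your explicit check that the stepsize choice~\eqref{ineq:eta-strongly-convex-nesterov} gives $\eta\le 1/(2\smcst)=\min\set{2/m(G),\,r(G)/(2G)}$ is the only bookkeeping needed, and it is done correctly.
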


With Lemma~\ref{lem:one-step-strongly-convex} and Lemma~\ref{lem:gradient-one-step-strongly-convex}, we will show that $\norm{\nabla f(y_t)}\le G$ for all $t\ge 0$ by induction in the following lemma.
\begin{lemma}
\label{lem:the-hard-one-strongly-convex}
For all $t\ge 0$, we have $\norm{\nabla f(y_t)}\le G$.
\end{lemma}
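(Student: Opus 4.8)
The plan is to prove \pref{lem:the-hard-one-strongly-convex} by induction on $t$, mirroring the argument for the convex case in \pref{lem:the-hard-one}. The base case $\norm{\nabla f(y_0)}=\norm{\nabla f(x_0)}\le G$ holds by the choice of $G$ in \eqref{ineq:starting-strongly-convex-nesterov} (note $y_0=x_0$ since $z_0=x_0$ and hence $\tau_0(z_0-x_0)=0$). For the inductive step, fix $t\ge 0$ and assume $\norm{\nabla f(y_s)}\le G$ for all $s<t$. Then \pref{lem:gradient-one-step-strongly-convex} yields $\norm{\nabla f(x_s)}\le G$ for all $0\le s\le t$ and verifies the hypothesis of \pref{lem:one-step-strongly-convex}, so for every $s<t$,
\begin{align*}
    A_{s+1}(f(x_{s+1}) - f^*) + \frac{1+\eta\mu A_{s+1}}{2\eta}\norm{z_{s+1} - x^*}^2 \le A_s(f(x_s) - f^*) + \frac{1+\eta\mu A_s}{2\eta}\norm{z_s - x^*}^2.
\end{align*}
Telescoping from $s=0$, using $z_0=x_0$ and $A_0=1/(\eta\mu)$ (so $1+\eta\mu A_0=2$), gives $f(x_{s+1}) - f^* \le \frac{1}{\eta\mu A_{s+1}}\bigl((f(x_0) - f^*) + \mu\norm{x_0 - x^*}^2\bigr)$ for all $s<t$; since $A_{s+1}\ge B_{s+1}\ge (1+\sqrt{\eta\mu})^{s}$ by \pref{lem:coef-control-2}, the sub-optimality gap decays geometrically. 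Combined with \pref{lem:reverse-PL-main}, which gives $\norm{\nabla f(x_s)}^2\le 2\smcst(f(x_s)-f^*)$ whenever $\norm{\nabla f(x_s)}\le G$, we obtain a sharp, rapidly decaying bound on every past gradient norm.

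The crux is to control $\norm{y_t-x_t}=\tau_t\norm{z_t-x_t}$. Unrolling the recursion $z_{s+1}-x_{s+1}=(1-\eta\mu\delta_s)(1-\tau_s)(z_s-x_s)+\eta(1-\delta_s)\nabla f(y_s)$ from $z_0=x_0$ (the products of coefficients collapsing thanks to the specific choice of $\tau_s,\delta_s$ in Algorithm~\ref{algo:nesterov-strongly-convex}, with products bounded via \pref{lem:coef-control-1}), one writes $y_t-x_t$ as a weighted combination of $\{\nabla f(y_s)\}_{s<t}$ with nonnegative weights. As in the convex proof, the naive bound $\norm{\nabla f(y_s)}\le G$ is too loose; instead, since $x_{s+1}=y_s-\eta\nabla f(y_s)$, \pref{lem:lsl} gives $\norm{\nabla f(y_s)}\le \norm{\nabla f(x_{s+1})}+\eta\smcst\norm{\nabla f(y_s)}$, which both bounds $\norm{\nabla f(y_s)}\le \frac{1}{1-\eta\smcst}\norm{\nabla f(x_{s+1})}$ and produces a self-referential inequality $\HardTerm\le\eta\smcst\HardTerm+(\text{main sum})$ for the quantity $\HardTerm$ bounding $\norm{y_t-x_t}$. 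Solving for $\HardTerm$, substituting $\norm{\nabla f(x_{s+1})}\le\sqrt{2\smcst(f(x_{s+1})-f^*)}$ and then the telescoped bound from the first paragraph, and invoking \pref{lem:coef-control-3} and \pref{lem:coef-control-4} to control the resulting weighted sum (the key being $\sum_s \sqrt{A_{s+1}}/A_t\le 3+4\log(e+1/(\eta\mu))$), we reach $\norm{y_t-x_t}\le \frac{G}{2\smcst}\le r(G)$; here the stepsize choice \eqref{ineq:eta-strongly-convex-nesterov} (in particular its $\log^4$ factor) and the choice of $G$ in \eqref{ineq:starting-strongly-convex-nesterov} are exactly what makes this final inequality close.

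Finally, since $\norm{\nabla f(x_t)}\le G$ and $\norm{y_t-x_t}\le r(G)$, a last application of \pref{lem:lsl} gives $\norm{\nabla f(y_t)}\le \norm{\nabla f(x_t)}+\smcst\norm{y_t-x_t}\le \sqrt{2\smcst(f(x_t)-f^*)}+\smcst\cdot\frac{G}{2\smcst}$, and plugging $A_t\ge 1/(\eta\mu)$ into the telescoped bound together with the choice of $G$ shows the right-hand side is at most $G$, completing the induction. I expect the main obstacle to be the middle step: unrolling and bookkeeping the more intricate weight recursion of Algorithm~\ref{algo:nesterov-strongly-convex} (the coupled quantities $\tau_t,\delta_t,A_t,B_t$), checking that the telescoping indeed goes through, and tracking the logarithmic blow-up so that the $\log^4$ in the stepsize suffices---the coefficient lemmas \pref{lem:coef-control-1}--\pref{lem:coef-control-4} are precisely the technical tools needed for this one step.
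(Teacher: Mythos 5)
Your proposal is correct and follows essentially the same route as the paper's own proof: induction with \pref{lem:gradient-one-step-strongly-convex} and \pref{lem:one-step-strongly-convex} to telescope the potential, \pref{lem:reverse-PL-main} to convert decaying sub-optimality into decaying gradient norms, unrolling the $z_t-x_t$ recursion with the coefficient lemmas \pref{lem:coef-control-1}--\pref{lem:coef-control-4} and the self-referential bound on $\HardTerm$ to show $\norm{y_t-x_t}\le G/(2\smcst)\le r(G)$, and a final application of \pref{lem:lsl} with $A_t\ge 1/(\eta\mu)$ to close the induction. The steps you flag as the main obstacle (collapsing the products of $(1-\eta\mu\delta_i)(1-\tau_i)$ into $A_{s+1}/A_t$ and tracking the logarithmic factors against the $\log^4$ in the stepsize) are exactly how the paper completes that middle step.
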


\begin{proof}[Proof of \pref{lem:the-hard-one-strongly-convex}]

We will prove this lemma by induction. First, by Lemma~\ref{lem:reverse-PL-main} and the choice of $G$, it is easy to verify that $\norm{\nabla f(x_0)}\le G$. Then for any fixed $t\ge 0$, suppose that $\norm{\nabla f(x_s)}\le G$ for all $s< t$. Then by Lemma~\ref{lem:one-step-strongly-convex} and Lemma~\ref{lem:gradient-one-step-strongly-convex}, we know that $\norm{\nabla f(x_s)} \leq G$ for all $0\leq s\leq t$, and that for all $s<t$,
\begin{align}
    A_{s+1}(f(x_{s+1})- f^*) +  \frac{1+\eta\mu A_{s+1}}{2\eta} \norm{z_{s+1} - x^*}^2  \leq A_s(f(x_s)-f^*) + \frac{1+\eta\mu A_s}{2\eta} \norm{z_s - x^*}^2. \label{ineq:potential-strongly-convex-nesterov}
\end{align}  
By telescoping (\ref{ineq:potential-strongly-convex-nesterov}), we have for all $0\leq s< t$,
\begin{align}
    f(x_{s+1}) - f^* \leq  \frac{1}{A_{s+1}\eta\mu} (f(x_0) -f^* + \mu \norm{z_0 - x^*}^2 ) . \label{ineq:FV-control-strongly-convex}
\end{align}
 For $0\leq s\leq t$, since $\norm{\nabla f(x_s)} \leq G$, then \pref{lem:reverse-PL-main} implies
\begin{align}
\label{ineq:FV-control-gradient}
    \norm{\nabla f(x_s)}^2 \leq 2\smcst (f(x_s)-f^*). 
\end{align}
Note that by Algorithm~\ref{algo:nesterov-strongly-convex}, we have
\begin{align*}
    z_{t} - x_{t} = (1-\eta\mu\delta_{t-1})(1-\tau_{t-1}) (z_{t-1}-x_{t-1}) + \eta(1-\delta_{t-1}) \nabla f(y_{t-1}) .
\end{align*}
Thus
\begin{align*}
z_t-x_t = \eta \sum\limits_{s=0}^{t-1} (1-\delta_s) \nabla  f(y_s) \prod_{i=s+1}^{t-1} (1-\eta\mu\delta_i)(1-\tau_i).
\end{align*}
Therefore
\begin{align*}
    y_t - x_t = \eta\tau_t\sum\limits_{s=0}^{t-1} (1-\delta_s) \nabla  f(y_s) \prod_{i=s+1}^{t-1} (1-\eta\mu\delta_i)(1-\tau_i).
\end{align*}


Moreover
\begin{align*}
1-\eta\mu \delta_i =  1 - \frac{\eta\mu(A_{i+1}-A_i)}{ 1+\eta\mu A_{i+1}} = \frac{1+\eta\mu A_i}{1+\eta\mu A_{i+1}}
\end{align*}
and 
\begin{align*}
1-\tau_i = 1- \frac{(A_{i+1} - A_i) (1+\eta\mu A_i) }{A_{i+1} + 2\eta\mu A_iA_{i+1} - \eta\mu A_i^2} =  \frac{A_i (1+\eta\mu A_{i+1}) }{A_{i+1} + 2\eta\mu A_iA_{i+1} - \eta\mu A_i^2} \leq \frac{A_i(1+\eta\mu A_{i+1})}{A_{i+1}(1+\eta\mu A_i)}.
\end{align*}
Thus we have
\begin{align*}
    \norm{y_t - x_t} &\leq \eta \tau_t \sum\limits_{s=0}^{t-1} (\delta_s-1)  
\frac{A_{s+1}}{A_t} \norm{\nabla f(y_s)} \leq \eta \sum\limits_{s=0}^{t-1} 
\frac{A_{s+1}}{A_t} \norm{\nabla f(y_s)} \rdef \HardTerm,
\end{align*}
where the second inequality follows from \pref{lem:coef-control-1}. We further control term $\HardTerm$ by
\begin{align*}
     \HardTerm &\leq \eta \sum\limits_{s=0}^{t-1} 
\frac{A_{s+1}}{A_t}  \prn*{\norm{\nabla f(x_{s+1})} + \eta\smcst \norm{ \nabla f(y_s)} }\\
     &\leq  \eta \smcst \HardTerm  +\eta \sum\limits_{s=0}^{t-1} 
\frac{A_{s+1}}{A_t}  \norm{\nabla f(x_{s+1})}.
\end{align*}
Thus we have
\begin{align*}
    \norm{y_t - x_t} &\leq \frac{\eta}{1-\eta L}\sum\limits_{s=0}^{t-1} 
\frac{A_{s+1}}{A_t}  \norm{\nabla f(x_{s+1})}\\
&\leq \frac{\eta}{1-\eta L}\sum\limits_{s=0}^{t-1} 
\frac{A_{s+1}}{A_t}  \sqrt{ 2\smcst (f(x_{s+1})-f^*)}   \tag{by (\ref{ineq:FV-control-gradient})}\\
&\leq \frac{\eta}{1-\eta L}\sum\limits_{s=0}^{t-1} 
\frac{A_{s+1}}{A_t}  \sqrt{ 2\smcst \cdot \frac{1}{A_{s+1}\eta\mu} (f(x_0) -f^* + \mu \norm{z_0 - x^*}^2 ) } \tag{by (\ref{ineq:FV-control-strongly-convex})}\\
&= \frac{\sqrt{2\eta L(f(x_0) -f^* + \mu \norm{z_0 - x^*}^2)}}{(1-\eta L)\sqrt{\mu}} \sum\limits_{s=0}^{t-1} 
\frac{\sqrt{A_{s+1}}}{A_t}   \\
&\leq \frac{\sqrt{2\eta L(f(x_0) -f^* + \mu \norm{z_0 - x^*}^2)}}{(1-\eta L)\sqrt{\mu}}  \prn*{ 3 + 4 \log (e+\frac{1}{\eta\mu})}. \tag{by \pref{lem:coef-control-4}}\\
&\leq \frac{\sqrt{\eta}}{1-\eta L}    \prn*{ 3 + 4 \log (e+\frac{1}{\eta\mu})} \cdot \frac{G\cdot L^{1/2-1/\alpha}}{4} \tag{by (\ref{ineq:starting-strongly-convex-nesterov})} \\
&\leq   \frac{3 + 4 \log (e+\frac{1}{\eta\mu})}{\log^2 \prn*{e + \frac{144L^{3-2/\alpha}}{\mu}} }\cdot \frac{G}{24L}   \tag{by (\ref{ineq:eta-strongly-convex-nesterov})} \\
& \leq  \frac{G}{2L} \leq r(G) .
\end{align*}
Since $\norm{\nabla f(x_t)}\leq G$ and we just showed $\norm{x_{t}-y_t}\le r(G)$, by \pref{lem:lsl}, we have
\begin{align*}
    \norm{\nabla f(y_t)} &\leq \norm{\nabla f(x_t)} + \smcst \norm{y_t - x_t}\\
    &\leq  \sqrt{ \frac{2\smcst}{\eta\mu A_t} ((f(x_0) - f^*) + \mu\norm{z_0-x^*}^2) } + \smcst\cdot \frac{G}{2\smcst} \tag{by (\ref{ineq:FV-control-strongly-convex})}\\
    &\leq  G \prn*{ \frac{1}{4} +  \frac{1}{2} } \leq G. \tag{by $A_t\geq 1/(\eta\mu)$ and (\ref{ineq:starting-strongly-convex-nesterov})}
\end{align*}
Then we complete the induction as well as the proof.
\end{proof}

\begin{proof}[\pfref{thm:nesterov-strongly-convex}]
Combining Lemmas~\ref{lem:one-step-strongly-convex},~\ref{lem:gradient-one-step-strongly-convex},~and~\ref{lem:the-hard-one-strongly-convex}, we know the following inequality holds for all $t\ge 0$.
\begin{align*}
    A_{t+1}(f(x_{t+1})- f^*) \!+ \! \frac{1 + \eta\mu A_{t+1}}{2\eta} \norm{z_{t+1} - x^*}^2 \leq A_t(f(x_t)-f^*) \!+ \!\frac{1 + \eta\mu A_t}{2\eta} \norm{z_{t} - x^*}^2.
\end{align*}
Then by telescoping, we get
\begin{align*}
        A_{t}(f(x_{t})- f^*) \!+ \! \frac{1 + \eta\mu A_{t}}{2\eta} \norm{z_{t} - x^*}^2 \leq A_0(f(x_0)-f^*) \!+ \!\frac{1 + \eta\mu A_0}{2\eta} \norm{z_{0} - x^*}^2.
\end{align*}
Finally, applying \pref{lem:coef-control-2}, we have $A_{t} = B_t + 1/(\eta\mu) \geq 1/(1-\sqrt{\eta\mu})^{t-1} + 1/(\eta\mu)$.
Thus completes the proof.
\end{proof}

\section{Analysis of GD for non-convex functions}
\label{app:nonconvex_gd}

In this section, we provide the proofs related to analysis of gradient descent for non-convex function, including those of Lemma~\ref{lem:nonconvex_gd_one_step} and Theorem~\ref{thm:nonconvex_gd}.

\begin{proof}[Proof of Lemma~\ref{lem:nonconvex_gd_one_step}]
    First, based on Corollary~\ref{cor:bd_g_by_f}, we know $\norm{\nabla f(x)}\le G<\infty$. Also note that 
    \begin{align*}
        \norm{x^+-x}=\norm{\eta \nabla f(x)}\le \eta G\le G/L.
    \end{align*}
    Then by Lemma~\ref{lem:lsl} and Remark~\ref{remark:lsl_for_ell_smooth}, we have $x^+\in\cX$ and
    \begin{align*}
        f(x^+)\le &f(x)+ \innerpc{\nabla f(x_t)}{x^+-x}+\frac{\smcst}{2}\norm{x^+-x}^2\\
        =&f(x)-\eta (1-\eta\smcst/2)\norm{\nabla f(x)}^2\\
        \le&f(x).
    \end{align*}
\end{proof}

\begin{proof}[Proof of Theorem~\ref{thm:nonconvex_gd}]
    By Lemma~\ref{lem:nonconvex_gd_one_step}, using induction, we directly obtain $f(x_t)\le f(x_0)$ for all $t\ge0$. Then by Corollary~\ref{cor:bd_g_by_f}, we have $\norm{\nabla f(x_t)}\le G$ for all $t\ge0$. Following the proof of Lemma~\ref{lem:nonconvex_gd_one_step}, we can similarly show
    \begin{align*}
        f(x_{t+1}) -f(x_t)\le \eta (1-\eta\smcst/2) -\frac{\eta}{2}\norm{\nabla f(x_t)}^2\le-\frac{\eta}{2}\norm{\nabla f(x_t)}^2.
    \end{align*}
    Taking a summation over $t<T$ and rearanging terms, we have
    \begin{align*}
        \frac{1}{T}\sum_{t<T}\norm{\nabla f(x_t)}^2\le \frac{2(f(x_0)-f(x_T))}{\eta T} \le \frac{2(f(x_0)-f^*)}{\eta T}.
    \end{align*}
\end{proof}

\section{Analysis of SGD for non-convex functions}
\label{app:nonconvex_sgd}

In this section, we provide the detailed convergence analysis of stochastic gradient descent for $\ell$-smooth and non-convex functions where $\ell$ is sub-quadratic. 

We first present some useful inequalities related to the parameter choices in Theorem~\ref{thm:sgd}.
\begin{lemma}
    \label{lem:parameters}
    Under the parameters choices in Theorem~\ref{thm:sgd}, the following inequalities hold.
    \begin{align*}
        \eta G\sqrt{2T}\le 1/2,\quad \eta^2 \sigma\smcst T\le 1/2,\quad 100\eta^2 T\sigma^2\smcst^2\le \delta G^2.
    \end{align*}
\end{lemma}
\begin{proof}[Proof of Lemma~\ref{lem:parameters}]
    First note that by Corollary~\ref{cor:bd_g_by_f}, we know
    \begin{align*}
        G^2=2\smcst F = 16\smcst (f(x_0)-f^*+\sigma)/\delta\ge 16\smcst \sigma/\delta,
    \end{align*}
    i.e., $\sigma\smcst\le G^2\delta/16$. Then since we choose $\eta\le \frac{1}{4G\sqrt{T}}$, we have
    \begin{align*}
         \eta G\sqrt{2T} \le& \sqrt{2}/4\le 1/2,\\
          \eta^2 \sigma\smcst T \le &\eta^2 T G^2\delta/16 \le \delta/256\le 1/2,\\
        100\eta^2 T\sigma^2\smcst^2 \le& 100\eta^2 T G^4\delta^2/256\le \delta G^2.
    \end{align*}
\end{proof}

Next, we show the useful lemma which bounds $\E[f(x_\tau)-f^*]$ and $\E\left[\sum_{t<\tau}\norm{\nabla f(x_t)}^2\right]$ simultaneously.

\begin{lemma}
    \label{lem:sgd_upper}
    Under the parameters choices in Theorem~\ref{thm:sgd}, the following inequality holds
    \begin{align*}
        \E\left[f(x_\tau)-f^*+\frac{\eta}{2}\sum_{t<\tau}\norm{\nabla f(x_t)}^2\right]\le f(x_0)-f^*+\sigma.
    \end{align*}
\end{lemma}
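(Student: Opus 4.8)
The plan is to turn the local descent lemma (Lemma~\ref{lem:lsl}) into a one-step inequality valid for every $t<\tau$, telescope it along the trajectory to bound $S_{\uc}$ pathwise, and then pass to expectations: the noise--gradient cross term is eliminated (up to a boundary correction) by the Optional Stopping Theorem, since $\tau$ is a stopping time, while the noise--square term is handled by Assumption~\ref{ass:noise}. Concretely, for $t<\tau$ the definitions of $\tau_1,\tau_2$ in~\eqref{eq:def_tau_sgd} together with $G\ge2\norm{\nabla f(x_0)}$ give $\norm{\nabla f(x_t)}\le G$ (indeed $\norm{\nabla f(x_s)}\le G$ for all $s\le\tau$, since $\tau\le\tau_1$) and $\eta\norm{\epsilon_t}\le\min\{r(G),G/\smcst\}/10$; hence $\norm{x_{t+1}-x_t}=\eta\norm{\nabla f(x_t)+\epsilon_t}\le\eta G+r(G)/10\le r(G)$ using $\eta\le r(G)/(2G)$. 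Lemma~\ref{lem:lsl} (with $x=x_2=x_t$, $x_1=x_{t+1}$) then gives $x_{t+1}\in\cX$ and the descent inequality; substituting $x_{t+1}-x_t=-\eta(\nabla f(x_t)+\epsilon_t)$, expanding, and using $\eta\smcst\le\tfrac12$ (which follows from $\eta\le\delta/(160\smcst)$) to absorb the $\tfrac{\smcst\eta}{2}\norm{\nabla f(x_t)}^2$ term, one obtains, for all $t<\tau$,
\[
\tfrac{\eta}{2}\norm{\nabla f(x_t)}^2\le f(x_t)-f(x_{t+1})-\eta(1-\eta\smcst)\innerpc{\nabla f(x_t)}{\epsilon_t}+\tfrac{\smcst\eta^2}{2}\norm{\epsilon_t}^2.
\]
Summing over $t<\tau$, telescoping the function values, and using $f(x_\tau)\ge f^*$ (valid because $x_\tau\in\cX$ when $\tau\ge1$, the claim being trivial when $\tau=0$), this gives the pathwise bound
\[
\tfrac{\eta}{2}\,S_{\uc}\le\Delta_0-\eta(1-\eta\smcst)\sum\nolimits_{t<\tau}\innerpc{\nabla f(x_t)}{\epsilon_t}+\tfrac{\smcst\eta^2}{2}\sum\nolimits_{t<\tau}\norm{\epsilon_t}^2.
\]

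Now I would take expectations; all sums above are finite ($\tau\le T$) with integrable summands. Writing $\cF_{t-1}:=\sigma(g_0,\dots,g_{t-1})$, the events $\{\tau\ge t\}$ are $\cF_{t-1}$-measurable (since $\tau_1,\tau_2$ are stopping times), so $\E[\mathbf{1}\{\tau\ge t\}\norm{\epsilon_t}^2]\le\sigma^2\Pro[\tau\ge t]$ and hence $\E\big[\sum_{t<\tau}\norm{\epsilon_t}^2\big]\le\sigma^2(1+\E[\tau])\le2\sigma^2T$. For the cross term, $N_n:=\sum_{t<n}\innerpc{\nabla f(x_t)}{\epsilon_t}$ is a martingale with respect to $(\cF_{n-1})_{n\ge0}$ because $\nabla f(x_t)$ is $\cF_{t-1}$-measurable and $\E_{t-1}[\epsilon_t]=0$; since $\tau+1$ is a bounded stopping time for this filtration, the Optional Stopping Theorem gives $\E\big[\sum_{t\le\tau}\innerpc{\nabla f(x_t)}{\epsilon_t}\big]=0$, hence $\E\big[\sum_{t<\tau}\innerpc{\nabla f(x_t)}{\epsilon_t}\big]=-\E[\innerpc{\nabla f(x_\tau)}{\epsilon_\tau}]$. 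The residual boundary term is estimated from $\norm{\nabla f(x_\tau)}\le G$ together with the fact that a large $\norm{\epsilon_\tau}$ forces $\tau=\tau_2$, combined with Lemma~\ref{lem:reverse-PL-main} (which keeps $\norm{\nabla f(x_t)}$ below $G/2$ before the exit, so that only a large $\epsilon_t$ can trigger $\tau$) and the truncated moment bound $\E_{t-1}[\mathbf{1}\{\norm{\epsilon_t}>c\}\norm{\epsilon_t}]\le\sigma^2/c$; this is precisely where the $400\sigma^2/(r(G)^2\delta)$ summand in $C$ is used to make the boundary term small. Putting the pieces together, $\tfrac{\eta}{2}\E[S_{\uc}]\le\Delta_0+\smcst\eta^2\sigma^2T+(\text{boundary})=\Delta_0+\tfrac{\smcst\sigma^2}{C}+(\text{boundary})$, and since $C\ge G^2\ge1600(\Delta_0+2\sigma)\smcst/\delta\ge3200\,\sigma\smcst$ (using $\delta<1$ and $\Delta_0+2\sigma\ge2\sigma$), the two extra terms together are at most $2\sigma$, yielding $\E[S_{\uc}]\le2(\Delta_0+2\sigma)/\eta$.

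The main obstacle is this stopping-time bookkeeping: because $\tau$ is defined through $\nabla f(x_{t+1})$ and $\epsilon_t$, the event $\{t<\tau\}$ is $\cF_t$- rather than $\cF_{t-1}$-measurable, so $\sum_{t<\tau}\innerpc{\nabla f(x_t)}{\epsilon_t}$ is \emph{not} literally a martingale stopped at $\tau$ and the naive identity $\E[\sum_{t<\tau}\innerpc{\nabla f(x_t)}{\epsilon_t}]=0$ fails; one must pass through $\tau+1$ and genuinely control the boundary contribution $\E[\innerpc{\nabla f(x_\tau)}{\epsilon_\tau}]$, which is exactly where Lemma~\ref{lem:reverse-PL-main} and the precise form of $C$ enter. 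Apart from this, the argument is the classical SGD descent computation, now valid by virtue of the local smoothness supplied by Lemma~\ref{lem:lsl}.
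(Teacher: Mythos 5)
Your overall architecture is the paper's: the same local descent step via Lemma~\ref{lem:lsl} (with the step-size control $\eta\norm{g_t}\le r(G)$ for $t<\tau$), telescoping to a pathwise bound, the noise-square term handled by Assumption~\ref{ass:noise} and $T=1/(C\eta^2)$, and the cross term handled by the Optional Stopping Theorem applied to $\sum_{t\le\tau}\innerpc{\nabla f(x_t)}{\epsilon_t}$, leaving the boundary term $\E[\innerpc{\nabla f(x_\tau)}{\epsilon_\tau}]$. Up to that point the argument is sound.

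The genuine gap is exactly the step you flag as the main obstacle: your control of the boundary term. You claim that Lemma~\ref{lem:reverse-PL-main} ``keeps $\norm{\nabla f(x_t)}$ below $G/2$ before the exit, so that only a large $\epsilon_t$ can trigger $\tau$,'' and then invoke a truncated moment bound together with the $400\sigma^2/(r(G)^2\delta)$ part of $C$. This does not work. Lemma~\ref{lem:reverse-PL-main} bounds $\norm{\nabla f(x)}^2$ by $2\ell(2\norm{\nabla f(x)})(f(x)-f^*)$, but under heavy-tailed noise $f(x_t)-f^*$ is not pathwise bounded along the SGD trajectory, so no pathwise bound $\norm{\nabla f(x_t)}\le G/2$ for $t<\tau$ follows; ruling out the scenario where the gradient drifts gradually up to $G$ (each step changes $\norm{\nabla f(x_t)}$ by at most $\eta\smcst(G+\norm{\epsilon_t})\lesssim G/5$, so $\tau=\tau_1$ can be triggered with an arbitrarily small $\epsilon_\tau$) is precisely the content of the companion lower-bound lemma and the contradiction argument in the proof of Theorem~\ref{thm:sgd}, not something available inside this lemma. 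Consequently your truncated-moment estimate misses the $\tau=\tau_1$ case, and the $400\sigma^2/(r(G)^2\delta)$ summand of $C$ is in any case not used here (the paper uses it only to bound $\Pro(\tau_2<T)$ by Chebyshev in the theorem's proof). The fix is much simpler and is what the paper does: bound $\abs{\E[\innerpc{\nabla f(x_\tau)}{\epsilon_\tau}]}\le G\,\E[\norm{\epsilon_\tau}]\le G\sqrt{\E[\norm{\epsilon_\tau}^2]}\le G\sigma\sqrt{T}=G\sigma/(\eta\sqrt{C})\le\sigma/\eta$, using Cauchy--Schwarz, $\norm{\epsilon_\tau}^2\le\sum_{t\le T}\norm{\epsilon_t}^2$, and only $C\ge G^2$; this term then contributes at most $\sigma$ to $\tfrac{\eta}{2}\E[S_{\uc}]$ and your final accounting closes. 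With that replacement your proof coincides with the paper's.
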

\begin{proof}[Proof of Lemma~\ref{lem:sgd_upper}]
If $t<\tau$, by the definition of $\tau$, we know $f(x_t)-f^*\le F$ and $\norm{\epsilon_t}\le \frac{G}{5\eta\smcst}$, and the former also implies $\norm{\nabla f(x_t)}\le G$ by Corollary~\ref{cor:bd_g_by_f}. Then we can bound
\begin{align*}
    \norm{x_{t+1}-x_t}=\eta\norm{ g_t} \le \eta(\norm{\nabla f(x_t)}+\norm{\epsilon_t})\le  \eta G + \frac{G}{5\smcst}\le \frac{G}{\smcst},
\end{align*}
where we use the choice of $\eta\le \frac{1}{2\smcst}$. Then based on Lemma~\ref{lem:lsl} and Remark~\ref{remark:lsl_for_ell_smooth}, for any $t<\tau$, we have
\begin{align}
    \label{eq:derivation}
    f(x_{t+1})-f(x_t)\le& \innerpc{\nabla f(x_t)}{x_{t+1}-x_t}+\frac{\smcst}{2}\norm{x_{t+1}-x_t}^2\notag\\
    =& -\eta\innerpc{\nabla f(x_t)}{g_t}+\frac{\eta^2\smcst}{2}\norm{g_t}^2\notag\\
    \le& -\eta\norm{\nabla f(x_t)}^2-\eta\innerpc{\nabla f(x_t)}{\epsilon_t}+\eta^2\smcst\norm{\nabla f(x_t)}^2+\eta^2\smcst\norm{\epsilon_t}^2\notag\\
    \le& -\frac{\eta}{2}\norm{\nabla f(x_t)}^2-\eta\innerpc{\nabla f(x_t)}{\epsilon_t}+\eta^2\smcst\norm{\epsilon_t}^2,
\end{align}
where the equality is due to \eqref{eq:sgd}; the second inequality uses $g_t=\epsilon_t+\nabla f(x_t)$ and Young's inequality $\norm{y+z}^2\le 2\norm{y}^2+2\norm{z}^2$ for any vectors $y,z$; and the last inequality chooses $\eta\le 1/(2\smcst)$.
 Taking a summation over $t<\tau$ and rearanging terms, we have
\begin{align*}
    f(x_\tau)-f^*+\frac{\eta}{2} \sum_{t<\tau} \norm{\nabla f(x_t)}^2\le f(x_0)-f^* -  \eta\sum_{t<\tau}\innerpc{\nabla f(x_t)}{\epsilon_t}+\eta^2 \smcst\sum_{t<\tau}\norm{\epsilon_t}^2.
\end{align*}
Now we bound the last two terms on th RHS. First, for the last term, we have
\begin{align*}
    \E\left[\sum_{t<\tau}\norm{\epsilon_t}^2\right]\le \E\left[\sum_{t<T}\norm{\epsilon_t}^2\right]\le\sigma^2 T,
\end{align*}
where the first inequality uses $\tau\le T$ by its defnition; and in the last inequality we use Assumption~\ref{ass:noise}.

For the cross term,  note that $\E_{t-1}\left[\innerpc{\nabla f(x_t)}{\epsilon_t}\right]=0$ by Assumption~\ref{ass:noise}. So this term is a sum of a martingale difference sequence. Since $\tau$ is a stopping time, we can apply the optional stopping theorem to obtain
\begin{align}
\label{eq:sgd_ost}
    \E\left[\sum_{t\le\tau}\innerpc{\nabla f(x_t)}{\epsilon_t}\right] =  0.
\end{align}
Then we have
\begin{align*}
    \E\left[-\sum_{t<\tau}\innerpc{\nabla f(x_t)}{\epsilon_t}\right] =& \E\left[\innerpc{\nabla f(x_{\tau})}{\epsilon_{\tau}}  \right] \le G\,\E[\norm{\epsilon_\tau}] \le G\sqrt{\E[\norm{\epsilon_\tau}^2]}\\&\le G\sqrt{\E\left[\sum_{t\le T}\norm{\epsilon_t}^2\right]}\le\sigma G\sqrt{T+1}\le \sigma G\sqrt{2T},
\end{align*}
where the equality is due to \eqref{eq:sgd_ost}; the first inequality uses $\norm{\nabla f(x_\tau)}\le G$ by the definition of $\tau$ in \eqref{eq:def_tau_sgd} and Corollary~\ref{cor:bd_g_by_f}; the fourth inequality uses $\E[X]^2\le \E[X^2]$ for any random variable $X$; and the last inequality uses Assumption~\ref{ass:noise}. 

Combining all the bounds above, we get
\begin{align*}
   \E\left[f(x_\tau)-f^*+\frac{\eta}{2} \sum_{t<\tau} \norm{\nabla f(x_t)}^2\right]\le& f(x_0)-f^*+\eta \sigma G\sqrt{2T} + \eta^2 \sigma^2\smcst  T\nonumber\\
    \le& f(x_0)-f^*+\sigma,
\end{align*}
where the last inequality is due to Lemma~\ref{lem:parameters}.
\end{proof}
With Lemma~\ref{lem:sgd_upper}, we are ready to prove Theorem~\ref{thm:sgd}.
\begin{proof}[Proof of Theorem~\ref{thm:sgd}]
We want to show the probability of $\{\tau<T\}$ is small, as its complement $\{\tau=T\}$ means $f(x_t)-f^*\le F$ for all $t\le T$ which implies $\norm{\nabla f(x_t)}\le G$ for all  $t\le T$. Note that
\begin{align*}
    \{\tau<T\} = \{\tau_2<T\}\cup \{\tau_1<T, \tau_2=T\}.
\end{align*}
Therefore we only need to bound the probability of each of these two events on the RHS.

We first bound $\Pro(\tau_2<T)$. Note that
\begin{align*}
\Pro(\tau_2< T) =& \Pro\left(\bigcup_{t<T} \left\{\norm{\epsilon_t}>\frac{G}{5\eta\smcst}\right\}\right)\\
\le& \sum_{t<T}\Pro\left(\norm{\epsilon_t}>\frac{G}{5\eta\smcst}\right)\\
    \le & \frac{25\eta^2T\sigma^2\smcst^2}{G^2}\\
    \le &  \delta/4,
\end{align*}
where the first inequality uses union bound; the second inequality applies Chebyshev's inequality and $\E[\norm{\epsilon_t}^2]=\E[\E_{t-1}[\norm{\epsilon_t}^2]]\le\sigma^2$ for each fixed $t$ by Assumption~\ref{ass:noise}; the last inequality uses Lemma~\ref{lem:parameters}.

Next, we will bound $\Pro(\tau_1<T, \tau_2=T)$. Note that under the event $\{\tau_1<T, \tau_2=T\}$, we know that 1) $\tau=\tau_1<T$ which implies $f(x_{\tau+1})-f^*>F$; and 2) $\tau<T=\tau_2$ which implies $\norm{\epsilon_\tau}\le \frac{G}{5\eta\smcst}$ by the definition in \eqref{eq:def_tau_sgd}. Also note that we always have $f(x_{\tau})-f^*\le F$ which implies $\norm{\nabla f(x_\tau)}\le G$ by Corollary~\ref{cor:bd_g_by_f}. Then we can show
\begin{align*}
    \norm{x_{{\tau}+1}-x_{\tau}}=\eta\norm{ g_{\tau}} \le \eta(\norm{\nabla f(x_{\tau})}+\norm{\epsilon_{\tau}})\le  \eta G + \frac{G}{5\smcst}\le \frac{G}{\smcst},
\end{align*}
where we choose $\eta\le \frac{1}{2\smcst}$.
Then based on Lemma~\ref{lem:lsl} and Remark~\ref{remark:lsl_for_ell_smooth}, we have
\begin{align*}
    f(x_{\tau+1})-f(x_{\tau})\le & -\frac{\eta}{2}\norm{\nabla f(x_{\tau})}^2-\eta\innerpc{\nabla f(x_{\tau})}{\epsilon_{\tau}}+\eta^2\smcst\norm{\epsilon_{\tau}}^2\\
    \le& \eta \norm{\nabla f(x_{\tau})}\cdot\norm{\epsilon_{\tau}}+\eta^2\smcst\norm{\epsilon_{\tau}}^2\\
    \le& \frac{G^2}{4\smcst}\\
    =&\frac{F}{2},
\end{align*}
where the first inequality is obtained following the same derivation as in \eqref{eq:derivation}; the last equality is due to Corollary~\ref{cor:bd_g_by_f}. Therefore we can show that under the event $\{\tau_1<T, \tau_2=T\}$,
\begin{align*}
    f(x_\tau)-f^* = f(x_\tau)-f(x_{\tau+1})+f(x_{\tau+1})-f^*> F/2.
\end{align*}
Hence,
\begin{align*}
    \Pro(\tau_1<T, \tau_2=T)\le \Pro\left(f(x_\tau)-f^*>F/2 \right)\le \frac{\E[f(x_\tau)-f^*]}{F/2}\le\frac{2(f(x_0)-f^*+\sigma)}{F}=\delta/4,
\end{align*}
where the second inequality uses Markov's inequality; the third inequality uses Lemma~\ref{lem:sgd_upper}; and in the last inequality we choose $F=8(f(x_0)-f^*+\sigma)/\delta$. 

Therefore we can show
\begin{align*}
    \Pro(\tau<T)\le \Pro(\tau_2<T)+\Pro(\tau_1<T, \tau_2=T)\le \delta/2.
\end{align*}
Then we also know $\Pro(\tau=T)\ge 1-\delta/2\ge 1/2$. Therefore, by Lemma~\ref{lem:sgd_upper},
\begin{align*}
    \frac{2(f(x_0)-f^*+\sigma)}{\eta}\ge&\E\left[\sum_{t<\tau}\norm{\nabla f(x_t)}^2\right]\\
    \ge&{\Pro(\tau=T)}\E\left[\left.\sum_{t<T}\norm{\nabla f(x_t)}^2 \right\vert \tau=T\right]\\\ge& \frac{1}{2}\E\left[\left.\sum_{t<T}\norm{\nabla f(x_t)}^2 \right\vert \tau=T\right].
\end{align*}
Then we have
\begin{align*}
    \E\left[\left.\frac{1}{T}\sum_{t<T}\norm{\nabla f(x_t)}^2 \right\vert \tau=T\right]\le \frac{4(f(x_0)-f^*+\sigma)}{\eta T}=\frac{\delta F}{2\eta T}\le \frac{\delta}{2}\cdot\epsilon^2,
\end{align*}
where the last inequality uses the choice of $T$. Let $\cE:=\{\frac{1}{T}\sum_{t<T}\norm{\nabla f(x_t)}^2>\epsilon^2\}$ denote the event of not converging to an $\epsilon$-stationary point. By Markov's inequality, we have $\Pro(\cE)\le \delta/2$. Therefore we have $\Pro(\{\tau<T\}\cup\cE)\le \delta$, which completes the proof.
\end{proof}
\section{Lower bound}
\label{app:lower_bound}

In this section, we provide the proof of Theorem~\ref{thm:gd-nonconvex-lb-formal}.
\begin{proof}[Proof of Theorem~\ref{thm:gd-nonconvex-lb-formal}]
    Let $c, \eta_0 > 0$ satisfy $\eta_0 \le c^2 / 2$.
    Consider 
    \begin{align*}
        f(x) = \begin{cases}
            \log (|x| - c), & |x| \ge y \\
            2 \log(y - c) - \log( 2y - |x| - c ), & c / 2 \le |x| < y \\
            k x^2 + b, & |x| < c / 2,
        \end{cases}
    \end{align*}
    where $c > 0$ is a constant and $y = (c + \sqrt{c^2 + 2\eta_0})/2 > 0$ is the fixed point of the iteration
    \begin{align*}
        x_{t+1} = \left|x_{t} - \frac{\eta_0}{x_t - c}\right|,
    \end{align*}
    and $k$, $b$ are chosen in such a way that $f(x)$ and $f'(x)$ are continuous. Specifically, choose $k = c^{-1} f'(c/2)$ and $b = f(c/2) - c f'(c/2) /4$.
    Since $f(-x) = f(x)$, $f(x)$ is symmetric about the line $x = 0$. In a small neighborhood, $f(x)$ is symmetric about $(y, f(y))$, so $f'(x)$ is continuous at $y$.

    Let us first consider the smoothness of $f$. By symmetry, it suffices to consider $x > 0$. Then, 
    \begin{align*}
        f'(x) = \begin{cases}
            (x - c)^{-1}, & x \ge y \\
            (2y - x - c)^{-1}, & c / 2 \le x < y \\
            2k x, & 0 < x < c / 2.
        \end{cases}
    \end{align*}
    Its Hessian is given by
    \begin{align*}
        f''(x) = \begin{cases}
            -(x - c)^{-2}, & x > y \\
            (2y - x - c)^{-2}, & c / 2 < x < y \\
            2k , & 0 < x < c / 2.
        \end{cases}
    \end{align*}
    Hence, $f(x)$ is $(2, 2k, 1)$-smooth.

    Note that $f(x)$ has a stationary point $0$.
    For stepsize $\eta_f$ satisfying $\eta_0 \le \eta_f \le c^2 / 4$, there exists $z = (c + \sqrt{c^2 + 2\eta_f}) \ge y$ such that $-z = z - \eta_f (y - c)^{-1}$ and by symmetry, once $x_{\tau} = z$, $x_{t} = \pm z$ for all $t\ge \tau$, making the GD iterations stuck. 
    Now we choose a proper $x_0$ such that $f'(x_0)$ and $f(x_0) - f(0)$ are bounded.

    We consider arriving at $y$ from above. That is, $x_0 \ge x_1 \ge \ldots x_{\tau} = z > c > 0$. 
    Since in each update where $x_{t+1} = x_t - \eta_f (x_t - c)^{-1} > c$, 
    \begin{align*}
        x_{t} - x_{t+1} 
        = x_t - (x_t - \eta_f (x_t - c)^{-1})
        = \eta_f (x_t - c)^{-1}
        \le \sqrt{\eta_f}.
    \end{align*}
    Hence, we can choose $\tau$ in such a way that $3c/2 \le x_0 < 3c/2 + \sqrt{\eta_f}$. Then, 
    \begin{align*}
        \log(c/2) \le f(x_0) \le \log(c/2 + \sqrt{\eta_f}), \quad 2/ (c + 2\sqrt{\eta_f})\le f'(x_0) \le 2/c.
    \end{align*}
    By definition, $y - c = \eta_0 (c + \sqrt{c^2 + 2\eta_0})^{-1}$. Hence, 
    \begin{align*}
        f(c/2) =\;& 2\log(y - c) - \log (2 y - c/2 - c) \\
        =\;& 2 \log(\eta_0) - 2 \log (c + \sqrt{c^2 + 2\eta_0}) - \log(\sqrt{c^2 + 2\eta_0} - c/2), \\
        f'(c/2) = \;& \frac{1}{\sqrt{c^2 + 2 \eta_0} - c/2}
    \end{align*}
    Then, 
    \begin{align*}
        f(x_0) - f(0) 
        =\;& f(x_0) - f(c/2) + c f'(c/2) / 4 \\
        \le\;& \log(c/2 + \sqrt{\eta_f}) + 2 \log(\eta_0^{-1}) + 2 \log (c + \sqrt{c^2 + 2\eta_0}) \\
        &\qquad + \log(\sqrt{c^2 + 2\eta_0} - c/2) + \frac{c}{4} \frac{1}{\sqrt{c^2 + 2 \eta_0} - c/2} \\
        \le\;& \log(c) + 2\log(\eta_0^{-1}) + 2 \log(2 \sqrt{2 c^2}) + \log(\sqrt{2 c^2}) + \frac{1}{2} \\
        =\;& 4 \log(c) + 2 \log(\eta_0^{-1}) + \frac{7}{2} \log(2) + \frac{1}{2}.
    \end{align*}
    For stepsize $\eta_f < \eta_0$, reaching below $4c/3$ takes at least
    \begin{align*}
        (x_0 - 4c/3) / \sqrt{\eta_f} \ge c/(6\sqrt{\eta_f}) > c \eta_0^{-1/2} / 6
    \end{align*}
    steps to reach $4c/3$, where $f'(4c/3) = \log(c/3)$.

    Now we set $c$ and $\eta_0$ and scale function $f(x)$ to satisfy the parameter specifications $L_0, L_2, G_0, \Delta_0$. Define $g(x) = L_2^{-1} f(x)$. Then, $g(x)$ is $(2, 2k L_2^{-1}, L_2)$-smooth. 
    Since the gradient of $g(x)$ is $L_2^{-1}$ times $f(x)$, the above analysis for $f(x)$ applies to $g(x)$ by replacing $\eta_0$ with $\eta_1 = L_2 \eta_0$ and $\eta_f$ with $\eta = L_2 \eta_f$.
    To ensure that
    \begin{align*}
        2 k L_2^{-1} = 2 (c L_2)^{-1} f'(c/2) = \frac{2}{cL_2} \frac{1}{\sqrt{c^2 + 2\eta_1} - c/2} \le \frac{4}{c^2 L_2} \le L_0,
    \end{align*}
    it suffices to take $c \ge 2 / \sqrt{L_0 L_2}$. To ensure that 
    \begin{align*}
        g'(x_0) \le \frac{2}{L_2 c} \le G_0,
    \end{align*}
    it suffices to take $c \ge 2 / (L_2 G_0)$. 
    To ensure that 
    \begin{align*}
        g(x_0) - g(0) \le (4 \log(c) + 2 \log(\eta_1^{-1}) + 3.5 \log 2 + 0.5) L_2^{-1} \le \Delta_0,
    \end{align*}
    it suffices to take
    \begin{align*}
        \log(\eta_1^{-1}) = \frac{L_2 \Delta_0 - 3.5\log 2 - 0.5}{2} - 2\log(c).
    \end{align*}
    Since we require $\eta_1 \le c^2 / 2$, parameters $L_2$ and $\Delta_0$ need to satisfy 
    \begin{align*}
        \log 2 - 2 \log(c) \le \frac{L_2 \Delta_0 - 3.5\log 2 - 0.5}{2} - 2\log(c),
    \end{align*}
    that is, $L_2 \Delta_0 \ge 5.5 \log 2 + 0.5$, which holds because $L_2 \Delta_0 \ge 10$. 
    Take $c = \max\{2/\sqrt{L_0 L_2}, 2/(L_2 G_0), \sqrt{8 / L_0}\}$.
    Then, as long as $\eta \le 2 / L_0$, the requirement that $\eta \le c^2 / 4$ is satisfied.
    Therefore, on $g(x)$ with initial point $x_0$, gradient descent with a constant stepsize either gets stuck, or takes at least 
    \begin{align*}
        c \eta_1^{-1/2} / 6 
        =\;& \frac{c}{6} \exp\Big(\frac{L_2 \Delta_0 - 3.5\log 2 - 0.5}{4} - \log(c)\Big) \\
        =\;& \frac{1}{6} \exp(\frac{L_2 \Delta_0 - 3.5\log 2 - 0.5}{4}) \\
        \ge\;& \frac{1}{6} \exp(\frac{L_2 \Delta_0}{8})
    \end{align*}
    steps to reach a $1$-stationary point. 
    
    On the other hand, if $\eta> 2/L_0$, consider the function $f(x)=\frac{L_0}{2}x^2$. For any $x_t\neq0$, we always have $\abs{x_{t+1}}/\abs{x_t}=\abs{1-\eta L_0}>1$, which means the iterates diverge to infinity.
\end{proof}

\end{document}